\def\thetitle{{The geometry of purely loxodromic subgroups of right-angled Artin groups}}
\let\@@enum@org\@@enum@
\def\@@enum@[#1]{\@@enum@org[\normalfont #1]}
\newtheorem{thm}{Theorem}[section]
\newtheorem{lem}[thm]{Lemma}
\newtheorem{cor}[thm]{Corollary}
\newtheorem{prop}[thm]{Proposition}
\newtheorem{que}[thm]{Question}
\theoremstyle{remark}
\theoremstyle{definition}
\newcommand\grp[1]{\langle #1\rangle}
\newcommand\al{\alpha}
\newcommand\R{\mathbb{R}}
\newcommand\Z{\mathbb{Z}}
\newcommand\supp{\operatorname{supp}}
\newcommand\lk{\operatorname{Lk}}
\newcommand\st{\operatorname{St}}
\newcommand\gex{{\Gamma}^e}
\newcommand\aga{{A(\Gamma)}}
\newcommand\gam{\Gamma}
\newcommand\bZ{\mathbb{Z}}
\newcommand\Mod{\operatorname{Mod}}
\newcommand\yt{\widetilde}
\newcommand\salv{S(\Gamma)}
\newcommand\Salv{\yt{S}(\Gamma)}
\begin{document}

\title\thetitle

\author{Thomas Koberda}
\address{Department of Mathematics, University of Virginia, Charlottesville, VA 22904-4137, USA}
\email{thomas.koberda@gmail.com}

\author{Johanna Mangahas}
\address{Department of Mathematics, University at Buffalo, 244 Mathematics Building, Buffalo, NY 14260, USA}
\email{mangahas@buffalo.edu}

\author{Samuel J. Taylor}
\address{Department of Mathematics, Yale University, 10 Hillhouse Ave, New Haven, CT 06520, USA}
\email{s.taylor@yale.edu}

\date{\today}
\keywords{right-angled Artin group, extension graph, convex cocompact subgroup, loxodromic isometry}
\begin{abstract}
We prove that finitely generated \emph{purely loxodromic} subgroups of a right-angled Artin group $A(\Gamma)$ fulfill equivalent conditions that parallel characterizations of convex cocompactness in mapping class groups $\Mod(S)$. In particular, such subgroups are quasiconvex in $A(\Gamma)$.  In addition, we identify a milder condition for a finitely generated subgroup of $A(\Gamma)$ that guarantees it is free, undistorted, and retains finite generation when intersected with $A(\Lambda)$ for subgraphs $\Lambda$ of $\Gamma$. These results have applications to both the study of convex cocompactness in $\Mod(S)$ and the way in which certain groups can embed in right-angled Artin groups.
\end{abstract}
\maketitle
\section{Introduction}
\subsection{Overview}
Let $\gam$ be a finite simplicial graph with vertex set $V(\gam)$ and edge set $E(\gam)$, and let $\aga$ be the corresponding right-angled Artin group.  That is, we write \[\aga : = \langle V(\gam)\mid [v_i,v_j]=1 \textrm{ if and only if } \{v_i,v_j\}\in E(\gam)\rangle.\]

Right-angled Artin groups have occupied an important position in geometric group theory in recent years.  Their intrinsic algebraic structure has been of interest since the 1980s~\cite{Droms, Servatius, mihailova}.  Right-angled Artin groups also play a key role in the study of three--manifold topology, culminating in Agol's resolution of the virtual Haken conjecture~\cite{Agol,KahnMarkovic,Wise}.  Right-angled Artin groups are also a prototypical class of CAT(0) groups, and have figured importantly in the study of mapping class groups of surfaces~\cite{CrispWiest, ClayLeiningerMangahas, Koberda, KimKoberda2013,KKIMRN,MangahasTaylor}.

In this article, we concentrate on this lattermost aspect of right-angled Artin group theory, where we think of right-angled Artin groups both as commonly occurring subgroups of mapping class groups of surfaces, and as algebraically similar to mapping class groups themselves.  In particular, we study a class of finitely generated subgroups of right-angled Artin groups, called \emph{purely loxodromic subgroups}.  These are certain finitely generated free subgroups of right-angled Artin groups which we prove are (in a very strong sense) quasiconvex in the ambient right-angled Artin group, and which have quasi-isometric orbit maps to the right-angled Artin group analogue of the curve complex, i.e. the extension graph. 

From this last perspective, we show that purely loxodromic subgroups of right-angled Artin groups are analogous to convex cocompact subgroups of mapping class groups of surfaces.  Convex cocompact subgroups are a class of subgroups of mapping class groups distinguished by Farb and Mosher~\cite{FarbMosher} and which have natural and useful geometric properties. Indeed, our main theorem represents the analogue to an open question for mapping class groups that has received much attention in recent years. (See Section \ref{mcg}  for details.) We shall see that, combining our results with earlier results of the second and third author, purely loxodromic subgroups of right-angled Artin groups generally provide an explicit source of convex cocompact subgroups of mapping class groups.

From our main theorem (Theorem \ref{t:main}), that strong quasiconvexity properties are implied by an element-wise condition on a finitely generated subgroup of a right-angled Artin group, several applications of general interest follow. For example, as referenced above, recent groundbreaking results imply that many geometrically significant groups embed as quasiconvex subgroups of right-angled Artin groups \cite{Agol, HagenWise1}. We show that when such a group contains distorted subgroups (e.g. fiber subgroups of fibered $3$--manifold groups or free-by-cyclic groups) there are elements of these subgroups that map to relatively simple elements of the right-angled Artin group. In particular, the images of these elements have non-cyclic centralizers in $\aga$. See the discussion after Theorem \ref{t:starfree} below for details.

\subsection{Main results}
When a right-angled Artin group $A(\Gamma)$ does not decompose as a direct product, its typical (as in \cite{Sisto}) elements are what we call \emph{loxodromic}.  Equivalently, these are its Morse elements, its elements with contracting axes, its elements that act as rank-one isometries on the CAT(0) space associated to $A(\Gamma)$, and its elements with cyclic centralizers \cite{Servatius, BehrstockCharney,BestvinaFujiwara}.  In \cite{KimKoberda2014}, loxodromic elements are characterized as those with unbounded orbit in the action of $A(\Gamma)$ on its \emph{extension graph} $\Gamma^e$, a hyperbolic space introduced by the first author and Kim  \cite{KimKoberda2013} to study embeddings between right-angled Artin groups.  Here we study \emph{purely loxodromic} subgroups of $A(\Gamma)$: those in which every non-trivial element is loxodromic.  Such subgroups exist in no short supply---in fact, they are typical \cite{TaylorTiozzo}; see Section \ref{s:extensiongraph} for details.  
Our main result is the following:

\begin{thm}\label{t:main}
Suppose that $\Gamma$ is connected and anti-connected and that $H<A(\Gamma)$ is finitely generated.
Then the following are equivalent.
\begin{itemize}
\item[(1)] Some (any) orbit map from $H$ into $\Gamma^e$ is a quasi-isometric embedding.
\item[(2)] $H$ is stable in $A(\Gamma)$.
\item[(3)] $H$ is purely loxodromic.
\end{itemize}
\end{thm}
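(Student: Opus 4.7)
\noindent The plan is to establish the cycle of implications $(3) \Rightarrow (1) \Rightarrow (2) \Rightarrow (3)$, with the main content being $(3) \Rightarrow (1)$; the implications involving condition $(3)$ combine with the known characterizations of loxodromic elements of $A(\Gamma)$, while $(1) \Rightarrow (2)$ exploits the hyperbolicity of the extension graph.

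The direction $(2) \Rightarrow (3)$ is structural: every element of a stable subgroup acts as a Morse element on the Cayley graph of $A(\Gamma)$, and Morse elements of $A(\Gamma)$ coincide with loxodromic elements by the equivalences cited from \cite{Servatius, BehrstockCharney, BestvinaFujiwara, KimKoberda2014}. For $(1) \Rightarrow (2)$, I would use that $\Gamma^e$ is a hyperbolic space whose coarse geometry records the Morse/contracting directions in $A(\Gamma)$: any two $A(\Gamma)$-quasi-geodesics joining points of $H$ project to $\Gamma^e$-quasi-geodesics, which by hyperbolicity fellow-travel the $\Gamma^e$-image of $H$, and the contracting-axis property of loxodromic elements then pulls this back to bounded Hausdorff distance in $A(\Gamma)$ itself, with constants depending only on the quasi-geodesic constants; this is stability.

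The heart of the theorem is the main direction $(3) \Rightarrow (1)$. Fix a finite generating set $S$ for $H$ and a vertex $v_0 \in V(\Gamma^e)$. The orbit map $h \mapsto h v_0$ is automatically Lipschitz, and the task is the reverse inequality. A preliminary step is to show that $H$ is a finitely generated free group: every loxodromic element of $A(\Gamma)$ has cyclic centralizer, so $H$ contains no $\mathbb{Z}^2$, and a ping-pong argument along loxodromic axes in $\Gamma^e$, combined with what is known about the subgroup structure of $A(\Gamma)$, should force $H$ to be free. The main task is then to rule out ``short elements'' in $H$: by contradiction, suppose there exist $h_n \in H$ with $|h_n|_S \to \infty$ but $d_{\Gamma^e}(v_0, h_n v_0) / |h_n|_S \to 0$. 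A rescaling or equivariant ultralimit argument on the action of $H$ on $\Gamma^e$ should extract a nontrivial element of $H$ whose action on the limit object has vanishing translation length, contradicting that every nontrivial element of $H$ is loxodromic.

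The principal obstacle is precisely this ``no short elements'' step. Because $\Gamma^e$ is not locally finite, standard compactness arguments for sequences of isometries do not apply directly; instead one must develop an analogue, adapted to the $A(\Gamma)$-action on $\Gamma^e$, of Bowditch's compactness criterion for convex cocompact subgroups of the mapping class group, together with uniform projection control on $H$-orbits coming from the subgroup geometry of $A(\Gamma)$. Once this is in hand, the implications assemble into the theorem.
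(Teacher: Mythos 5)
Your chain $(3)\Rightarrow(1)\Rightarrow(2)\Rightarrow(3)$ differs from the paper's structure, which goes through a fourth combinatorial condition, \emph{join-busting}: the paper shows $(3)\Rightarrow$ join-busting (Theorem~\ref{t:jbusting}), then join-busting $\Rightarrow(1)$ (since bounded join subwords means star-length is bi-Lipschitz to word length, and $\Gamma^e$-orbit distance is quasi-isometric to star-length) and join-busting $\Rightarrow(2)$ (Theorem~\ref{t:stability} and Corollary~\ref{cor:joinbusting_implies_stable}), with the reverse implications $(1)\Rightarrow(3)$ and $(2)\Rightarrow(3)$ being the easy ones. The proofs are via disk diagrams and surgery, not ultralimits.

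Two genuine gaps in your proposal. First, $(1)\Rightarrow(2)$: you assert that a $(K,L)$-quasi-geodesic in $A(\Gamma)$ with endpoints in $H$ projects to a quasi-geodesic in $\Gamma^e$. This fails. The orbit map $A(\Gamma)\to\Gamma^e$ is Lipschitz but crushes entire join subgroups to bounded sets, so an $A(\Gamma)$-quasi-geodesic that spends a long interval in a coset of a join subgroup projects to a path in $\Gamma^e$ that is far from being a quasi-geodesic. Controlling exactly this phenomenon is what join-busting is for, and it requires the hypothesis that $H$ is purely loxodromic, not merely that the orbit map is a QI embedding; the paper therefore proves $(3)\Rightarrow(2)$ rather than $(1)\Rightarrow(2)$. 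Second, $(3)\Rightarrow(1)$: the rescaling/ultralimit scheme you sketch is not carried out, and you yourself note the obstruction (non-local-finiteness of $\Gamma^e$, no compactness for sequences of isometries). In fact this is the hardest implication and the paper spends Sections~\ref{s:dds}--\ref{s:jbusting} building the machinery (reducing diagrams, combed subwords, the cancellation-diameter bound of Lemma~\ref{l:cancel diam}, homogenization in Lemma~\ref{l:homogeneous}) to obtain the join-busting constant from a direct pigeonhole argument: if $H$ contained arbitrarily long join subwords, one extracts two elements of $H$ whose quotient is conjugate to a nontrivial join word, contradicting that $H$ is purely loxodromic. That argument is combinatorial and makes no appeal to limit objects; you would need to supply a comparably concrete substitute before your outline becomes a proof.

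Your $(2)\Rightarrow(3)$ is essentially the paper's, though the paper gives the explicit witness: for an elliptic $w$ with a commuting $c$ generating $\mathbb{Z}\oplus\mathbb{Z}$, the paths $c^nw^nc^{-n}$ and $w^n$ are uniform quasi-geodesics with endpoints in $\langle w\rangle$ whose Hausdorff distance grows, so $\langle w\rangle$ is not stable. You should also note that $(1)\Rightarrow(3)$ is immediate (elliptic elements have bounded $\Gamma^e$-orbit and the group is torsion-free), so once $(3)\Rightarrow(1)$ and $(3)\Rightarrow(2)$ are in hand, there is no need for the problematic $(1)\Rightarrow(2)$ step at all.
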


Here, a graph is called \emph{anti-connected} if its opposite graph is connected. See Subsection \ref{s:notation} for a more detailed discussion. A subgroup $H$ of a finitely generated group $G$ is \emph{stable} if it is undistorted in $G$ and quasi-geodesics in $G$ between points in $H$ have uniformly bounded Hausdorff distance from one another (see Section \ref{s:stability} for details). This property was defined by the third author and Durham in \cite{DurhamTaylor}, where they also show that it implies that $H$ is quasiconvex with respect to any word metric on $G$.  We remark that both stability and condition (1) are strong properties about global geometry---the embedding of the subgroup in either a relevant space or the ambient group---whereas (3) can be read as a purely algebraic condition (that every element in $H$ has cyclic centralizer).  In fact, the proof goes through a fourth equivalence, \emph{join-busting} (we give the definition in Subsection \ref{s:context}), which may be considered a local, combinatorial condition.

A milder condition on $H < A(\Gamma)$ is that none of its nontrivial elements are conjugate into a subgroup generated by a star of $\Gamma$ (see Subsection \ref{s:notation} for complete definitions). 
In that case, we say that $H$ is \emph{star-free}.  For this more general class of subgroups of $A(\Gamma)$, we have:

\begin{thm}\label{t:starfree}
If $\Gamma$ is connected and anti-connected and $H<A(\Gamma)$ is finitely generated and star-free, then:
\begin{itemize}
\item[(1)] $H$ is a free group,
\item[(2)] $H$ is undistorted in $A(\Gamma)$, and
\item[(3)] We have that \[H_{\Lambda}=H \cap A(\Lambda)\] is finitely generated, for any induced subgraph $\Lambda \subset \Gamma$.
\end{itemize}
\end{thm}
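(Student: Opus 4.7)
The plan is a simultaneous induction on $|V(\Gamma)|$, driven by the action of $H$ on the Bass--Serre tree of an amalgamated splitting of $A(\Gamma)$ over a star subgroup. The base case $|V(\Gamma)| \le 1$ is trivial: the star of any vertex equals $\Gamma$, so star-freeness forces $H = \{1\}$.

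For the inductive step, fix any $v \in V(\Gamma)$ and consider the standard amalgamated free product
\[
A(\Gamma) = A(\Gamma-v) \ast_{A(\lk(v))} A(\st(v)),
\]
noting that $A(\st(v)) = \langle v\rangle \times A(\lk(v))$; let $T$ be its Bass--Serre tree. Because $A(\lk(v)) \subseteq A(\st(v))$, the star-free hypothesis forces every edge stabilizer of the $H$-action on $T$, and every stabilizer of an $A(\st(v))$-type vertex, to be trivial. A standard argument using finite generation of $H$ shows that $H$ acts cocompactly on its minimal invariant subtree $T_0 \subseteq T$. Bass--Serre theory combined with Grushko's theorem then yields a free-product decomposition
\[
H = H_1 \ast \cdots \ast H_k \ast F,
\]
where each $H_i$ is finitely generated and conjugate into $A(\Gamma-v)$, and $F$ is a finitely generated free group. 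After conjugating, each $H_i$ is a finitely generated subgroup of $A(\Gamma-v)$; the elementary inclusion $\st_{\Gamma-v}(u) \subseteq \st_\Gamma(u)$ for every vertex $u$ shows that $H_i$ is again star-free in $A(\Gamma-v)$.

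Since $|V(\Gamma-v)| < |V(\Gamma)|$ (and any disconnection of $\Gamma-v$ can be handled componentwise via Kurosh's theorem), the inductive hypothesis yields that each $H_i$ is free, undistorted in $A(\Gamma-v)$, and satisfies property~(3) inside $A(\Gamma-v)$. Conclusions (1) and (3) for $H$ follow quickly: $H$ is a free product of free groups, hence free, establishing~(1); and for any proper induced subgraph $\Lambda \subsetneq \Gamma$, choosing the vertex $v$ to lie in $V(\Gamma) \setminus V(\Lambda)$ places $A(\Lambda) \subseteq A(\Gamma-v)$, so $H \cap A(\Lambda) = (H \cap A(\Gamma-v)) \cap A(\Lambda)$ is finitely generated by the inductive~(3) applied to the vertex stabilizer $H \cap A(\Gamma-v) \le A(\Gamma-v)$.

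The principal obstacle is part~(2). Each $H_i$ is undistorted in $A(\Gamma-v)$ by induction, and $A(\Gamma-v)$ is a retract (hence an undistorted subgroup) of $A(\Gamma)$, so each $H_i$ is undistorted in $A(\Gamma)$. The subtle point is that a free product of undistorted subgroups need not be undistorted in the ambient group. The plan is to upgrade to undistortedness of $H$ via a combination-theorem style argument using the cocompact tree action: given $h \in H$ with free-product normal form $h = h^{(1)} \cdots h^{(m)}$, one compares a geodesic realizing $h$ in $A(\Gamma)$ with the corresponding path in $T_0$. Since $H$ meets every conjugate of $A(\lk(v))$ trivially by star-freeness, no two consecutive syllables can be shortened by absorption into a common conjugate of $A(\lk(v))$; this is the key geometric input for establishing
\[
\sum_{j=1}^{m} |h^{(j)}|_{A(\Gamma)} \lesssim |h|_{A(\Gamma)},
\]
which, combined with the undistortedness of each $H_i$ in $A(\Gamma)$, yields $|h|_H \lesssim |h|_{A(\Gamma)}$ and establishes~(2).
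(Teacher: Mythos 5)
Your plan for part (1) is exactly the paper's proof of Proposition \ref{p:free}: split $A(\Gamma)$ as $A(\Gamma-v)\ast_{A(\lk(v))}A(\st(v))$, observe that star-freeness kills all edge stabilizers of the $H$-action on the Bass--Serre tree, and induct. Parts (2) and (3) are where your route departs sharply from the paper, which uses disk diagrams throughout: the paper's undistortion (Proposition \ref{p:undistorted}) rests on the ``cancellation diameter'' bound (Lemma \ref{l:cancel diam}) and the ``bounded non-contribution'' bound (Lemma \ref{l:contribution}), and the Howson property (Theorem \ref{t:hannaneumann}) is a pigeonhole argument on \emph{cancellation patterns} in reducing diagrams, not a Bass--Serre/Kurosh induction.

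The real gap is in part (2), and you essentially flag it yourself. Knowing that each factor $H_i$ (and the free factor $F$) is undistorted in $A(\Gamma)$ does not give undistortion of the free product $H=H_1\ast\cdots\ast H_k\ast F$, and the asserted estimate $\sum_j|h^{(j)}|_{A(\Gamma)}\lesssim|h|_{A(\Gamma)}$ is precisely the theorem restated, not something that follows from a remark about normal forms. The phrase ``no two consecutive syllables can be shortened by absorption into a common conjugate of $A(\lk(v))$'' is a qualitative normal-form statement with no metric content: the rewriting that could shorten $h^{(j)}h^{(j+1)}$ happens inside $A(\Gamma)$, not inside $H$, so the hypothesis $H\cap gA(\lk(v))g^{-1}=1$ does not by itself control it. Moreover $A(\lk(v))$ is not malnormal in $A(\Gamma-v)$ in general, so there is no cheap small-cancellation structure to invoke; any honest combination theorem here (e.g.\ in the style of Gitik or Martinez-Pedroza, or a cubical convexity combination) needs hypotheses and an argument that you have not supplied. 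This is exactly the work the paper's Lemmas \ref{l:cancel diam}--\ref{l:contribution} do by hand via reducing diagrams.

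A secondary issue affects part (3) as you have framed it: you induct on the statement of Theorem \ref{t:starfree}, which hypothesizes $\Gamma$ connected and anti-connected, but $\Gamma-v$ need satisfy neither, and the parenthetical ``handle disconnection componentwise via Kurosh'' is not an argument. This is fixable --- the paper's Theorem \ref{t:hannaneumann} and Proposition \ref{p:free} drop those hypotheses, so the correct move is to state and induct on a version of (1) and (3) without them --- but as written the induction does not close. Also, to apply the inductive hypothesis you need $H\cap A(\Gamma-v)$ (the stabilizer of the base $A(\Gamma-v)$-vertex) to be finitely generated; you get this because edge stabilizers are trivial, so every vertex of $T$ outside the cocompact minimal subtree $T_0$ has trivial $H$-stabilizer, and stabilizers of vertices in $T_0$ are the finitely many finitely generated free factors. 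That step is correct but should be said, since it is the entire content of the case $\Lambda=\Gamma-v$.

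In short: (1) matches the paper; (3) is a genuinely different and plausibly salvageable route once the graph hypotheses in the induction are sorted out; (2) is an unfilled gap, and it is precisely the point where the paper's disk-diagram machinery is doing the heavy lifting.
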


The main content of Theorem \ref{t:starfree} is its last two points; statement (1) is known to specialists.  Statement (2) indicates that, among the multiple known constructions of distorted subgroups in right-angled Artin groups, ``star-words'' are a necessary common feature.  In particular, it is a recent theorem of Hagen--Wise that every hyperbolic free-by-cyclic group $G$ has a finite index subgroup $G'$ that quasi-isometrically embeds in a right-angled Artin group $A(\Gamma)$ \cite{HagenWise1, HagenWise2}. Theorem \ref{t:starfree} implies that the fiber subgroup of $G' \le A(\Gamma)$, which is always distorted, necessarily contains words that are conjugate into star subgroups. 

Regarding the third statement of Theorem \ref{t:starfree}, it is not generally true that a finitely generated free subgroup of $A(\Gamma)$ must intersect $A(\Lambda)$ in a finitely generated group; we give a counterexample in Section \ref{s:neumann}. We refer to statement (3) as the \emph{Howson property} for star-free subgroups of right-angled Artin groups.

\subsection{Motivation from mapping class groups}\label{mcg}
In the mapping class group $\Mod(S)$, typical (as in \cite{Rivin, Maher, Sisto}) elements are \emph{pseudo-Anosov}.  Here pseudo-Anosov mapping classes may be defined as those with unbounded orbit in the mapping class group action on its associated \emph{curve graph} $\mathcal{C}(S)$, a famously hyperbolic space \cite{MasurMinsky}.  Pseudo-Anosov elements are also alternately characterized by being Morse, having virtually infinite cyclic centralizers, and having contracting axes in relevant spaces \cite{Behrstock, Minsky96}.

Farb and Mosher \cite{FarbMosher} introduced the notion of \emph{convex cocompact} subgroups of $\Mod(S)$, which they defined as the finitely generated subgroups with quasiconvex orbit in Teichm\"{u}ller space.  Such subgroups are \emph{purely pseudo-Anosov}, meaning all infinite-order elements are pseudo-Anosov.  Subsequent work has led to alternate characterizations: Kent and Leininger, and, independently, Hamenst\"{a}dt, proved the equivalence of convex cocompactness and (1) below, while Durham and Taylor proved the equivalence of convex cocompactness and (2).

\begin{thm}[\cite{DurhamTaylor,Hamenstadt,KentLeininger}] \label{t:cc}
A finitely generated $H<\Mod(S)$ is convex cocompact if and only if either of the following equivalent conditions hold:
\begin{itemize}
\item[(1)] Some (any) orbit map from $H$ into $\mathcal{C}(S)$ is a quasi-isometric embedding.
\item[(2)] $H$ is stable in $\Mod(S)$.
\end{itemize}
\end{thm}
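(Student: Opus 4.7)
The plan is to establish all three conditions equivalent via the cyclic chain convex cocompact $\Rightarrow$ (1) $\Rightarrow$ (2) $\Rightarrow$ convex cocompact, following Kent--Leininger/Hamenst\"adt for the first implication and Durham--Taylor for the remaining two. The principal tools throughout are Masur and Minsky's geometric results: the hyperbolicity of the curve graph $\mathcal{C}(S)$, the coarsely Lipschitz systole projection $\mathcal{T}(S)\to\mathcal{C}(S)$ on the thick part, the distance formula relating $\Mod(S)$-word distance to subsurface projection distances, and the bounded geodesic image theorem.

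For convex cocompact $\Rightarrow$ (1), by the Farb--Mosher definition the $H$-orbit in Teichm\"uller space $\mathcal{T}(S)$ is quasiconvex, and since $H$ is purely pseudo-Anosov the orbit and its quasiconvex hull lie in the thick part of $\mathcal{T}(S)$. I would invoke Masur and Minsky's result that the systole projection takes Teichm\"uller geodesics in the thick part to unparameterized quasi-geodesics in $\mathcal{C}(S)$. Combining this with cocompactness of the $H$-action on its hull and positivity of the $\mathcal{C}(S)$-translation lengths of pseudo-Anosovs in $H$ yields that the orbit map $H\to\mathcal{C}(S)$ is a quasi-isometric embedding.

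For (1) $\Rightarrow$ (2), assuming $H\to\mathcal{C}(S)$ is a QI embedding, I would apply the Masur--Minsky distance formula in $\Mod(S)$, which expresses word distance coarsely as a sum over isotopy classes of essential subsurfaces of subsurface projection distances above a threshold. The QI embedding hypothesis combined with Behrstock's inequality forces every proper subsurface projection between $H$-orbit points to be uniformly bounded, so the $\Mod(S)$-distance restricted to $H$ is coarsely the $\mathcal{C}(S)$-distance, giving undistortion. For the Hausdorff-closeness of quasi-geodesics with endpoints in $H$ required by stability, the bounded geodesic image theorem shows that any such $\Mod(S)$-quasi-geodesic projects to an unparameterized quasi-geodesic in $\mathcal{C}(S)$; $\delta$-hyperbolicity of $\mathcal{C}(S)$ produces fellow-traveling there, and the distance formula pulls this control back to $\Mod(S)$.

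For (2) $\Rightarrow$ convex cocompact, stability immediately implies every infinite-order element of $H$ has a Morse axis in $\Mod(S)$, and by Behrstock's characterization of Morse elements in $\Mod(S)$ such elements are pseudo-Anosov, so $H$ is purely pseudo-Anosov. Reversing the argument of the previous paragraph---stability, through the distance formula, again forces uniform bounds on proper subsurface projections between orbit points---shows the orbit map to $\mathcal{C}(S)$ is a QI embedding, recovering (1). Then the Kent--Leininger weak hull construction in $\mathcal{T}(S)$, applied to the purely pseudo-Anosov subgroup with QI-embedded $\mathcal{C}(S)$-orbit, produces a quasiconvex $H$-invariant set on which $H$ acts properly cocompactly, yielding convex cocompactness. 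The main obstacle will be the (1) $\Leftrightarrow$ (2) equivalence, which requires delicate control over every subsurface projection via Behrstock's inequality and the bounded geodesic image theorem to ensure that the richer, non-hyperbolic geometry of $\Mod(S)$ conceals no distortion or quasi-geodesic wandering not already detected by the curve graph.
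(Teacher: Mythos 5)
This is a cited theorem (from Kent--Leininger, Hamenst\"adt, and Durham--Taylor); the paper states it as background motivation in Section 1.3 and gives no proof of its own, so there is nothing internal to compare your attempt against. Evaluating your sketch on its own terms: the overall architecture is sound and matches the literature. The implication ``convex cocompact $\Rightarrow$ (1)'' by way of thick-part quasiconvexity and the systole projection is essentially the Kent--Leininger argument, and the use of the weak hull in $\mathcal{T}(S)$ to close the loop ``(1) $\Rightarrow$ convex cocompact'' is also theirs.

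One place where your sketch glosses over a genuine difficulty is the step ``(1) $\Rightarrow$ (2),'' where you write that the QI-embedding hypothesis ``combined with Behrstock's inequality forces every proper subsurface projection between $H$-orbit points to be uniformly bounded.'' Behrstock's inequality by itself is a statement about projections to a pair of overlapping subsurfaces and does not directly yield such a bound. The actual control on proper subsurface projections here comes from a more involved argument: one uses the bounded geodesic image theorem together with the fact that a $\mathcal{C}(S)$-geodesic between orbit points fellow-travels the (QI-embedded) $H$-orbit, and then argues that a large projection to a proper subsurface $Y$ would force the geodesic in $\mathcal{C}(S)$ to enter a neighborhood of $\partial Y$ in a way incompatible with the hyperbolicity and the uniform progress guaranteed by the QI embedding. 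This is the heart of what Kent--Leininger and Durham--Taylor actually establish, and your sketch should not present it as a formal consequence of one named inequality. Similarly, the passage from ``bounded subsurface projections'' plus fellow-traveling in $\mathcal{C}(S)$ to full Hausdorff-closeness of quasi-geodesics in $\Mod(S)$ is precisely where Durham--Taylor's contribution lies; it is not just ``pulled back'' from $\mathcal{C}(S)$ by the distance formula but requires a contracting-geodesic or hierarchy argument. Your sketch has the right ingredients in play but should be more candid that these middle steps constitute the substantive content rather than routine bookkeeping.
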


Postponing to Section \ref{s:context} our comments on the apparent similarities between Theorems \ref{t:main} and \ref{t:cc}, let us first remark that Theorem \ref{t:cc} does not include the element-wise condition known to be necessary for convex cocompactness: that the group $H$ is purely pseudo-Anosov. In fact, this is an open question that has received much recent attention.

\begin{que}[Farb--Mosher]\label{q:ppA}Are finitely generated, purely pseudo-Anosov subgroups necessarily convex cocompact?\end{que}

So far, this question has been answered only in special cases.  It is easily seen to be true for subgroups of Veech groups, which preserve a hyperbolic disk isometrically embedded in Teichm\"{u}ller space \cite{KentLeininger2}.  A more significant case is resolved by \cite{DowdallKentLeininger} (generalizing \cite{KentLeiningerSchleimer}) who answer Question \ref{q:ppA} affirmatively for subgroups of certain hyperbolic 3-manifold groups embedded in $\Mod(S)$.

Theorem \ref{t:main} completes the affirmative answer for a third case, the family of mapping class subgroups studied by the second and third authors in \cite{MangahasTaylor}.  That paper considers what the authors call \emph{admissible embeddings} of $A(\Gamma)$ in $\Mod(S)$, whose abundant existence is established in \cite{ClayLeiningerMangahas}.  The central result of \cite{MangahasTaylor} is that, if $A(\Gamma) < \Mod(S)$ is admissible, then a finitely generated $H < A(\Gamma) < \Mod(S)$ is convex cocompact if and only if it is purely pseudo-Anosov and quasiconvex in $A(\Gamma)$ using the standard word metric.  Since for any embedding of $A(\Gamma)$ into $\Mod(S)$, pseudo-Anosov elements in the target are the images of loxodromic elements of $\aga$, Theorem \ref{t:main} renders the quasiconvex condition redundant:

\begin{cor}\label{c:answerquestion}If $A(\Gamma) < \Mod(S)$ is an admissible embedding, then any finitely generated $H < A(\Gamma) < \Mod(S)$ is convex cocompact if and only if it is purely pseudo-Anosov.
\end{cor}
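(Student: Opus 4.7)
The plan is to leverage Theorem \ref{t:main} to upgrade the element-wise condition (pure pseudo-Anosovness) into a global geometric one (quasiconvexity in $A(\Gamma)$), and then to feed this directly into the main result of \cite{MangahasTaylor}. The forward implication is a standard consequence of Farb--Mosher's definition: any infinite order element of a convex cocompact subgroup of $\Mod(S)$ has quasigeodesic orbit in Teichm\"uller space, hence must be pseudo-Anosov.

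For the reverse direction, suppose $H<A(\Gamma)<\Mod(S)$ is finitely generated and purely pseudo-Anosov. The key observation, as noted in the paragraph preceding the corollary, is that under any embedding $A(\Gamma)\hookrightarrow\Mod(S)$ the pseudo-Anosov elements of $\Mod(S)$ that lie in $A(\Gamma)$ are precisely the images of loxodromic elements of $A(\Gamma)$; consequently $H$ is purely loxodromic as a subgroup of $A(\Gamma)$. I would then invoke Theorem \ref{t:main} to conclude that $H$ is stable in $A(\Gamma)$, and in particular quasiconvex in $A(\Gamma)$ with respect to its standard word metric (as recorded from \cite{DurhamTaylor} in the discussion after Theorem \ref{t:main}). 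Applying the main theorem of \cite{MangahasTaylor} to the purely pseudo-Anosov, $A(\Gamma)$-quasiconvex subgroup $H$ then yields that $H$ is convex cocompact in $\Mod(S)$.

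The only genuine subtlety is confirming that Theorem \ref{t:main} applies, namely that the graph $\Gamma$ in an admissible embedding is both connected and anti-connected. This should be automatic from the definition of admissibility recalled from \cite{ClayLeiningerMangahas,MangahasTaylor}, since an admissible $A(\Gamma)<\Mod(S)$ is built so that $A(\Gamma)$ contains pseudo-Anosov elements and has no direct product decomposition compatible with the surface; I would verify this directly, or else reduce to the connected anti-connected case by splitting $H$ across the free/direct factors of $A(\Gamma)$ coming from the disconnection, handling each factor separately and reassembling via the standard combination theorems for convex cocompactness. I expect this reduction, rather than the main chain of implications, to be the only place requiring care.
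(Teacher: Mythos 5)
Your proposal matches the paper's argument exactly: the forward direction is standard Farb--Mosher, and the reverse direction translates purely pseudo-Anosov to purely loxodromic, invokes Theorem \ref{t:main} to get stability (hence quasiconvexity in $A(\Gamma)$), and feeds this into the main theorem of \cite{MangahasTaylor} to discharge the quasiconvexity hypothesis. Your added caution about verifying that admissibility forces $\Gamma$ to be connected and anti-connected is reasonable and not spelled out in the paper, but it is indeed built into the definition of admissibility (such embeddings require $A(\Gamma)$ to contain loxodromic elements acting as pseudo-Anosovs, ruling out joins, and the constructions of \cite{ClayLeiningerMangahas,MangahasTaylor} work with connected $\Gamma$), so no reduction or combination theorem is actually needed.
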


\subsection{Motivation from right-angled Artin group geometry}\label{s:context}
From another point of view, Theorem \ref{t:main} can be read as an affirmative answer for the right-angled Artin group analogue of Question \ref{q:ppA}. More precisely, Question \ref{q:ppA} asks whether for a finitely generated subgroup $H\le \Mod(S)$, the element-wise condition of being pseudo-Anosov is strong enough to guarantee the global property of being convex cocompact. Theorem \ref{t:main} gives a positive answer to the corresponding question within a right-angled Artin group. The analogy arises from parallel conditions $(1)$ and $(2)$ in Theorems \ref{t:main} and \ref{t:cc}, along with the corresponding characterizations of loxodromic elements in $A(\Gamma)$ and pseudo-Anosov elements in $\Mod(S)$. We refer to \cite{KimKoberda2014} for a detailed account of the analogy between $\Gamma^e$ and $\mathcal{C}(S)$.

The proof of Theorem \ref{t:main} establishes the equivalence of (1)-(3) via a fourth condition, that $H$ is \emph{join-busting}.  Let $\gam$ be a connected and anti-connected graph, and let $H<\aga$ be a finitely generated purely loxodromic subgroup.  For a natural number $N$, we say that $H$ is \emph{$N$--join-busting} if whenever $w\in H$ is a reduced word in $\aga$ and $\beta\leq w$ is a join subword, then $\ell_{\aga}(\beta)\leq N$. (Join subwords are defined in Section \ref{s:notation}; they are the portions of the word $w$ that live in flats of $A(\Gamma)$.)  We say $H$ is join-busting if it is $N$--join-busting for some $N$.  Intuitively, geodesics between elements in a join-busting subgroup can spend only bounded bursts of time in embedded product regions.

\subsection{Organization of paper} 
Section \ref{s:background} fixes notation and background about right-angled Artin groups, including in Subsection \ref{s:extensiongraph} a summary of results about extension graphs and recipes for producing many purely loxodromic subgroups.

An important tool for the proof of Theorem \ref{t:main} are what we call disk diagrams, which are dual to van Kampen diagrams and relate to the cubical structure of $A(\Gamma)$.  These are defined in Section \ref{s:dds} and yield useful lemmas collected in Section \ref{s:ddapps}.

In Section \ref{s:jbusting}, we prove Theorem \ref{t:jbusting}, that finitely generated purely loxodromic subgroups are join-busting.  In Section \ref{s:stability}, we prove that join-busting subgroups fulfill Statement (2) of Theorem \ref{t:main} (Corollary \ref{cor:joinbusting_implies_stable}), and we give more details on that condition, called \emph{stability}, which is a kind of strong quasiconvexity.

We prove Theorem \ref{t:main} in Section \ref{s:cnvxccmpct}, and Theorem \ref{t:starfree} in Section \ref{s:neumann}.

\subsection{Acknowledgments}
The authors thank S. Dowdall and I. Kapovich for helpful conversations. The authors are grateful to an anonymous referee for a very detailed report which greatly improved the paper. The first named author was partially supported by NSF grant DMS-1203964 while this work was completed.  The second named author is partially supported by NSF DMS-1204592 and was a participant at ICERM while this work was completed.  The third named author is partially supported by NSF grant DMS-1400498. 

\section{Background}\label{s:background}

\subsection{Coarse geometry}\label{s:coarse_geo}
For metric spaces $(X,d_X)$ and $(Y,d_Y)$ and constants $K\ge 1$ and $L \ge 0$, a map $f: X \to Y$ is a \emph{$(K,L)$-quasi-isometric embedding} if for all $x_1,x_2 \in X$,
\[\frac{1}{K} d_X(x_1,x_2) -L \le d_Y(f(x_1),f(x_2)) \le K d_X(x_1,x_2) +L.\]
A \emph{quasi-isometric embedding} is simply a $(K,L)$--quasi-isometric embedding for some $K,L$.  If the inclusion map of a finitely generated subgroup $H$ into the finitely generated group $G$ is a quasi-isometric embedding, we say $H$ is \emph{undistorted} in $G$ (this is independent of the word metrics on $H$ and $G$).

When a quasi-isometric embedding $f:X\to Y$ has the additional property that every point in $Y$ is within a bounded distance from the image $f(X)$, we say $f$ is a \emph{quasi-isometry} and $X$ and $Y$ are \emph{quasi-isometric}.

Where $I$ is a subinterval of $\R$ or $\Z$, we call a $(K,L)$-quasi-isometric embedding $f:I \to Y$ a \emph{$(K,L)$-quasi-geodesic}. If $K =1$ and $L=0$, then $f: I \to Y$ is a \emph{geodesic}. When we refer to a $K$-quasi-geodesic, we mean a $(K,K)$-quasi-geodesic as we have defined it here.

A subset $C \subset X$ is \emph{$K$-quasiconvex} if for any $x,y \in C$ and any geodesic $[x,y]$ between $x,y$ in $X$,
\[[x,y] \subset N_{K}(C),\]  
where $N_K(\cdot)$ denotes the $K$-neighborhood. We say $C$ is \emph{quasiconvex} if it is $K$-quasiconvex for some $K$.  When we speak of a quasiconvex subgroup $H$ of a group $G$, we have fixed a word metric on $G$ with respect to some finite generating set (changing generating sets can change which subgroups are quasiconvex). 

A quasi-geodesic $\gamma$ in a metric space $X$ is called \emph{Morse} or \emph{stable }if every $(K,L)$--quasi-geodesic with endpoints on $\gamma$ is within a bounded distance of $\gamma$, depending only on $K$ and $L$. For a finitely generated group $G$ and $g \in G$, $g$ is called a \emph{Morse element} of $G$ if $\langle g \rangle$ is a Morse quasi-geodesic in $G$ with respect to some (any) finite generating set for $G$. A generalization of stability to subgroups $H \le G$ is recalled in Section \ref{s:stability}.

If $X$ is a geodesic metric space, i.e. every two points are joined by some geodesic, then $X$ is said to be $\delta$-hyperbolic if for any $x,y,z$ and geodesic segments $[x,y],[y,z],[x,z]$ joining them, we have $[x,y] \subset N_{\delta}([x,y] \cup [y,z])$. Here, $N_{\delta}(\cdot)$ denotes a $\delta$-neighborhood in $X$. The space $X$ is \emph{hyperbolic} if $X$ is $\delta$-hyperbolic for some $\delta \ge 0$. We will require some standard terminology about group actions on hyperbolic spaces, so suppose that $G$ acts on $X$ by isometries. An element $g \in G$ acts \emph{elliptically} on $X$ if $g$ has a bounded diameter orbit in the space $X$. At the other extreme, $g\in G$ acts \emph{loxodromically} on $X$ if for $x \in X$ the stable translation length 
\[
 \liminf_{n\to \infty} \frac{d_X(x,g^nx)}{n}
\]
is strictly positive. In this case, it follows easily that any orbit of $\langle g \rangle$ in $X$ is a quasi-geodesic. Finally, the action $G \curvearrowright X$ is said to be \emph{acylindrical} if for every $r >0$ there are $N,R >0$ such that for any $x,y\in X$ with $d(x,y) \ge R$ there are no more than $N$ elements $g \in G$ with 
\[
d(x,gx)\le r \quad \text{and} \quad d(y,gy)\le r .
 \]
For additional facts about groups acting acylindrically on hyperbolic spaces, see \cite{osin2015acylindrically}.

\subsection{RAAGs, graphs, joins, and stars}\label{s:notation} 
Fix a finite, simplicial graph $\Gamma$ with vertex set $V(\Gamma)$ and edge set $E(\Gamma)$, where edges are unordered pairs of distinct vertices.  The \emph{right-angled Artin group} with defining graph $\Gamma$ is the group $A(\Gamma)$ presented by 
\[ \langle v \in V(\Gamma) : [v,w] = 1 \text{ if and only if } \{v,w\} \in E(\Gamma) \rangle.\]
A graph $\gam$ is a \emph{join} if there are nonempty subgraphs $J_1,J_2\subset\gam$ such that $\gam=J_1*J_2$.  Here, the star notation means that every vertex of $J_1$ is adjacent to every vertex of $J_2$. We remark that $A(\Gamma)$ decomposes as a direct product if and only if $\Gamma$ is a join.

We call the generators $v \in V(\Gamma)$ the \emph{vertex generators} of $A(\Gamma)$ and note that whenever we say that an element $g\in A(\Gamma)$ is represented by a word $w$, we always mean a word in the vertex generators. Call a word $w$ representing $g \in A(\Gamma)$ \emph{reduced} if it has minimal length among all words representing $g$. Throughout, we use the symbol ``$=$''  
to denote equality in the group $A(\Gamma)$, i.e. $w=w'$ if and only if $w$ and $w'$ represent the same element in $A(\Gamma)$. To say that $w$ and $w'$ are equal as words in the vertex generators, we will use the notation $w \equiv w'$.

Let $w$ be any word in the vertex generators. We say that $v\in V(\gam)$ is in the \emph{support} of $w$, written $v\in \supp(w)$, if either $v$ or $v^{-1}$ occurs as a letter in $w$. For $g \in\aga$ and $w$ a reduced word representing $g$, we define the support of $g$, $\supp(g)$, to be $\supp(w)$. It follows from \cite{HermillerMeier} that $\supp(g)$ is well-defined, independent of the reduced word $w$ representing $g$. Indeed, this fact also follows from the techniques used in Section \ref{s:dds}.

A simplicial graph is \emph{connected} if it is connected as a simplicial complex.  The \emph{opposite graph} of a simplicial graph $\gam$ is the graph $\gam^{opp}$, whose vertices are the same as those of $\gam$, and whose edges are $\{\{v,w\}\mid \{v,w\}\notin E(\gam)\}$.  A simplicial graph is \emph{anti-connected} if its opposite graph is connected.  Note that a graph is anti-connected if and only if it does not decompose as a nontrivial join.

Throughout this work, by a subgraph $\Lambda \subset \Gamma$ we will always mean a subgraph which is induced from $\Gamma$. Recall that a subgraph is \emph{induced} (or \emph{full}) if any edge of $\Gamma$ whose vertices are contained in $\Lambda$ is also an edge of $\Lambda$. For such a subgraph, the natural homomorphism $A(\Lambda) \to \aga$ is injective and, in fact, is an isometric embedding with respect to the word metrics given by the vertex generators. This last fact is not special to right-angled Artin groups, but in fact holds for general Artin groups~\cite{CP2014}. We identify $A(\Lambda)$ with its image in $\aga$ and refer to it as the subgroup of $\aga$ induced by $\Lambda \subset \Gamma$. Hence, for any $g \in \aga$ and subgraph $\Lambda \subset \Gamma$, $\supp(g) \subset \Lambda$ if and only if $g \in A(\Lambda)$.

Of particular importance are the subgroups of $A(\Gamma)$ that nontrivially decompose as direct products. Say that an induced subgraph $J$ of $\Gamma$ is a \emph{join} of $\Gamma$ if $J$ itself is a join graph and call any subgraph $\Lambda \subset J$ of a join graph a \emph{subjoin}; note that this definition does not require a subjoin be itself a join. The induced subgroup $A(J) \le A(\Gamma)$ a \emph{join subgroup}.  A word $w$ in $\aga$ is called a $\emph{join word}$ if $\supp(w) \subset J$ for some join $J$ of $\Gamma$. Hence, a reduced word $w$ is a join word if and only if $w$ represents $g \in \aga$ such that $g \in A(J)$. If $\beta$ is a subword of $w$, we will say that $\beta$ is a \emph{join subword} of $w$ when $\beta$ is itself a join word. If an element $g \in \aga$ is conjugate into a join subgroup, then we will call $g$ \emph{elliptic}. This terminology is justified by the discussion in Section \ref{s:extensiongraph}. It is a theorem of Servatius \cite{Servatius} that $g \in A(\Gamma)$ lives in a $\Z \oplus \Z$ subgroup of $A(\Gamma)$ if and only if $g$ is an elliptic element of $A(\Gamma)$. (This is part of Theorem \ref{thm:loxodromics} below).

Let $v\in V(\gam)$ be a vertex.  The \emph{star} of $v$, written $\st(v)$, is the vertex $v$ together with the \emph{link} of $v$, written $\lk(v)$, which is the set of vertices adjacent to $v$.  A \emph{star subgroup} is a subgroup of a right-angled Artin group generated by the vertices in a star, i.e. $A(\st(v))$ for some vertex $v$.  Note that a star of a vertex is always a join, but the converse is generally not true.  A reduced word $w\in\aga$ is a \emph{star word} if it is supported in a star of $\Gamma$. A word $w$ (or subgroup $H$) of $A(\Gamma)$ is \emph{conjugate into a star} if some conjugate of $w$ (resp. $H$) is contained in a star subgroup of $A(\Gamma)$. Note again that a star word is always a join word, but the converse is generally not true.

\subsection{The Salvetti complex and its hyperplanes}\label{s:salvetti}
In the study of the geometry of a right-angled Artin group, it is often useful to consider a canonical $K(G,1)$ for a right-angled Artin group $\aga$, called the \emph{Salvetti complex} of $\aga$, which we denote by $\salv$.  For the convenience of the reader, we recall a construction of $\salv$.

Let $n=|V(\gam)|$.  We consider the $1$--skeleton of the unit cube $I^n\subset\R^n$ with the usual cell structure, and we label the $1$--cells of $I^n$ which contain the origin, by the vertices of $\gam$.  If $K$ is a complete subgraph of $\gam$, we attach the corresponding sub-cube of $I^n$ labeled by the relevant $1$--cells.  Doing this for every complete subgraph of $\gam$, we obtain a subcomplex $X(\gam)\subset I^n$.  We write $\salv=X(\gam)/\Z^n$, where we have taken the quotient by the usual $\Z^n$--action on $\R^n$.

It is easy to check that $\salv$ is a compact cell complex and that $\pi_1(\salv)\cong\aga$.  By the Milnor--Schwarz Lemma, we have that $\aga$ is quasi-isometric to $\Salv$, the universal cover of $\salv$.  By construction, the labeled $1$--skeleton of $\Salv$ is also the Cayley graph of $A(\Gamma)$ with respect to the generating set $V(\Gamma)$.  Consequently, reduced words in $A(\Gamma)$ correspond to geodesics in $\Salv^{(1)}$, which we call \emph{combinatorial geodesics}.  We refer to distance in $\Salv^{(1)}$ as \emph{combinatorial distance}.  By \emph{combinatorial quasiconvexity}, we mean quasiconvexity in $\Salv^{(1)}$.

It is well-known that $\Salv$ is a CAT(0) cube complex.  This structure will be important to our proofs, so we record some relevant definitions here.  In $\salv$, each edge $e_v$ (labeled by the generator $v$) is dual to a \emph{hyperplane} $H_v$ determined as follows: let $m$ be the midpoint of $e_v$.  For every sub-cube $e_v \times I^k$ in $X(\gam)$, the \emph{midcube dual to} $e_v$ is the set $m \times I^k$; we define $H_v$ as the image in $\salv$ of all midcubes dual to $e_v$.  Observe that, by construction, $H_v$ and $H_u$ intersect if and only if $v$ and $u$ commute.  The lifts of hyperplanes in $\salv$ define similarly labeled hyperplanes dual to edges in $\Salv$.  Each of these latter hyperplanes separates $\Salv$ into two convex sets.  It follows that the combinatorial distance between a pair of vertices equals the number of hyperplanes in $\Salv$ separating those vertices. These facts hold generally for CAT(0) cube complexes: see Lecture 1 of~\cite{sageevnotes} or Proposition 3.4 of~\cite{schwernotes}.

\subsection{Extension graphs and a profusion of purely loxodromic subgroups}\label{s:extensiongraph}
In this section, we record some facts about the extension graph $\gex$, and we establish the existence of many purely loxodromic subgroups of right-angled Artin groups. Recall that $g \in \aga$ is said to be elliptic if it is conjugate into a join subgroup of $\aga$. Otherwise, we say that $g \in \aga$ is \emph{loxodromic}. Also, call $g\in \aga$ \emph{cyclically reduced} if it has the shortest word length among all of its conjugates in $\aga$.

Let $\gam$ be a finite simplicial graph.  The \emph{extension graph} $\gex$ of $\gam$ is the graph whose vertices are given by $\{v^g\mid v\in V(\Gamma),g\in\aga\}$, and whose edges are given by pairs $\{\{v^g,w^h\}\mid[v^g,w^h]=1\}$. Here,  $v^g =g^{-1}vg$.

Note that the conjugation action of $\aga$ on itself induces an action of $\aga$ on $\gex$ by isometric graph automorphisms.

\begin{thm}[See \cite{KimKoberda2013}, \cite{KimKoberda2014}]\label{t:gex}
Let $\gam$ be a finite, connected, and anti-connected simplicial graph with at least two vertices.
\begin{enumerate}
\item
The graph $\gex$ is connected.
\item
The graph $\gex$ has infinite diameter.
\item
The graph $\gex$ is locally infinite.
\item
The graph $\gex$ is quasi-isometric to a regular tree of infinite valence.  In particular, $\gex$ is $\delta$--hyperbolic.
\item
The action of $\aga$ on $\gex$ is acylindrical.  In particular, the action of every nonidentity element of $\aga$ is either elliptic or loxodromic, depending on whether its orbits in $\gex$ are bounded or unbounded.
\item
A nontrivial cyclically reduced element of $\aga$ acts elliptically on $\gex$ if and only if its support is contained in a join of $\gam$.  In particular, $\aga$ contains a loxodromic element.
\item
Any purely loxodromic subgroup of $\aga$ is free.
\end{enumerate}
\end{thm}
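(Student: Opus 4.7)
The plan is to prove the seven items largely in the order they appear, with the bulk of the work concentrated in the quasi-tree statement (4), from which several others cascade. My overall strategy is to exploit the fact that $\Gamma^e$ carries a natural action of $A(\Gamma)$ by isometries, together with the combinatorial fact that each vertex $v^g \in \Gamma^e$ has stabilizer conjugate to the star subgroup $A(\st(v))$.

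First I would establish (1) and (3) directly. For connectedness, I fix a vertex $v \in V(\Gamma) \subset V(\Gamma^e)$ and argue by induction on the word length of $g \in A(\Gamma)$ that $v^g$ can be joined to $v$. The inductive step handles a generator $u$: if $u$ commutes with $v$ then $v^u = v$, and otherwise I would use connectedness of $\Gamma$ to interpolate through a chain of vertices $v = w_0, w_1, \ldots, w_k = u$ with consecutive ones adjacent, while using anti-connectedness to ensure the conjugation can be \emph{broken up} across non-commuting pieces. Anti-connectedness also gives (3): since $\lk(v)$ is nonempty after anti-connectedness (else $\Gamma$ is a join with $\{v\}$ as a factor's complement), the infinite group $A(\st(v))$ acts on $\Gamma^e$ fixing $v$ and moving the adjacent vertex $w \in \lk(v)$ to infinitely many distinct neighbors $w^h$.

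The main obstacle is (4), the quasi-isometry to a regular infinite-valence tree. The plan is to build an equivariant tree-like approximation to $\Gamma^e$ using the disk-diagram machinery from Section~\ref{s:dds}: given a combinatorial geodesic in $\Salv$ representing an element of $A(\Gamma)$, I would extract a canonical sequence of \emph{pivots}, vertex conjugates that record where the geodesic fails to lie in a join region. The resulting combinatorial data should look like a path in a tree, and equivariance plus acylindrical-type bounds should yield the quasi-isometry. Hyperbolicity follows formally, and (2) then follows from (6) together with the standard fact that a loxodromic isometry has infinite orbits. For (6), the \emph{if} direction is easy: an element cyclically reduced with support in a join $J_1 * J_2$ lies inside $A(J_1) \times A(J_2)$, hence commutes with all of one factor, hence fixes infinitely many vertices of $\Gamma^e$, and so has bounded orbits on a hyperbolic graph. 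The \emph{only if} direction I would prove by using the tree-like model to estimate orbit displacements of cyclically reduced words whose support is not in a join, showing the displacement grows linearly.

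For (5), I would analyze, for points $v^g, w^h \in \Gamma^e$ at large distance, the intersection of their near-stabilizers; any group element nearly fixing both must nearly fix a long segment of the tree model, and the pointwise stabilizer of such a segment is contained in a bounded intersection of star subgroups, which one shows is finite (in fact trivial, since $A(\Gamma)$ is torsion-free). Finally, (7) follows from the combination of (4), (5), and purely loxodromic assumption: a purely loxodromic subgroup $H$ acts acylindrically on a hyperbolic graph quasi-isometric to a tree with every nontrivial element loxodromic, and by Serre's theorem applied to the associated action on an actual tree (or via the Osin acylindricity framework), $H$ is free. I expect the construction and verification of the tree model in (4) to absorb most of the technical effort.
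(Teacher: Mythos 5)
This is a citation theorem: the paper does not prove it, but attributes all seven items to Kim--Koberda \cite{KimKoberda2013,KimKoberda2014}, where the extension graph and its geometry were introduced and developed. There is therefore no in-paper proof to compare your sketch against; the honest reading is that the paper simply imports these facts.

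That said, a few remarks on the sketch itself, since you attempted a derivation from scratch. Your treatment of (3) is shaky: anti-connectedness is not what guarantees $\lk(v)\neq\emptyset$ (connectedness together with $|V(\Gamma)|\geq 2$ already gives that), and, more seriously, the orbit of a neighbor $w$ under $A(\st(v))$--conjugation can be finite --- if $\lk(v)$ is a single vertex then $A(\st(v))\cong\Z^2$ is abelian and that orbit is a single point --- so ``moving $w$ to infinitely many distinct neighbors'' does not follow as stated and one has to choose the representative vertex (or the argument) more carefully. Your outline of (4) names a plausible-sounding device (a ``pivot sequence'' along a combinatorial geodesic) but gives none of the construction; this is precisely where Kim and Koberda do real work, and the sketch does not yet contain a verifiable argument. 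The ``if'' direction of (6) also elides a step: a cyclically reduced $g$ with support in a join $J_1 * J_2$ need not centralize a whole factor of $A(J_1)\times A(J_2)$ unless its support actually sits inside one side; what one should extract is that some conjugate of a vertex generator centralizes $g$, and that takes a line of justification. Finally, for (7) the route you propose --- ``Serre's theorem applied to the associated action on an actual tree'' obtained from the quasi-tree model --- is not available: a quasi-isometry of $\Gamma^e$ to a tree does not produce an isometric action of $A(\Gamma)$ on a tree. The paper itself gives the clean argument later, in Proposition~\ref{p:free}: it bypasses (4) and (5) entirely and instead uses the amalgam decomposition $\aga\cong\langle\st(v)\rangle *_{\langle\lk(v)\rangle} A(\Gamma_v)$ and Bass--Serre theory, inducting on $|V(\Gamma)|$, which is shorter and avoids the quasi-tree machinery altogether.
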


Hence, using Theorem \ref{t:gex}, we see that when $\Gamma$ if connected and anti-connected $g \in A(\Gamma)$ is \emph{loxodromic} if and only if it acts as a loxodromic isometry of $\Gamma^e$. Combining results of \cite{Servatius, BehrstockCharney,KimKoberda2014} we state the following characterization of loxodromic elements of $A(\Gamma)$ for easy reference to the reader.

\begin{thm}[Characterization of loxodromics] \label{thm:loxodromics}
Let $g\in A(\Gamma)$ for $\Gamma$ connected, anti-connected, and with at least two vertices. The following are equivalent:
\begin{enumerate}
\item $g$ is loxodromic, i.e. $g$ is not conjugate into a join subgroup.
\item $g$ acts as a rank $1$ isometry on $\Salv$.
\item The centralizer $C_{\aga}(g)$ is cyclic.
\item $g$ acts as a loxodromic isometry of $\gex$.
\end{enumerate}
\end{thm}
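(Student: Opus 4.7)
The plan is to establish the four equivalences by taking condition (1)---that $g$ is not conjugate into a join subgroup---as the hub, and separately verifying (1) $\Leftrightarrow$ (k) for $k=2,3,4$. Each of these equivalences is already in the literature (Servatius, Behrstock--Charney, Kim--Koberda), so the task is one of synthesis rather than of independent proof; the main difficulty will be making sure the precise statements from the three sources fit together cleanly.

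First I would handle (1) $\Leftrightarrow$ (4). The $\aga$-action on $\gex$ is by conjugation, so both ``acts loxodromically on $\gex$'' and ``conjugate into a join subgroup'' are invariant under replacing $g$ by a conjugate. I would therefore pass to a cyclically reduced conjugate of $g$. Theorem \ref{t:gex}(6) says that such an element acts elliptically on $\gex$ iff its support is contained in a join of $\gam$, and Theorem \ref{t:gex}(5) says the $\aga$-action on $\gex$ is acylindrical so every non-identity element is elliptic or loxodromic. Combining these two statements gives (1) $\Leftrightarrow$ (4) at once.

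Next, for (1) $\Leftrightarrow$ (3), I would cite Servatius's centralizer theorem. Servatius computes $C_{\aga}(g)$ for a cyclically reduced $g$ in terms of the support $S = \supp(g)$ and the common link of the vertices in $S$; in particular, $C_{\aga}(g)$ is cyclic exactly when $S$ is not contained in a join of $\gam$, which is exactly the condition that $g$ is not conjugate into a join subgroup. Since centralizers are conjugated when $g$ is conjugated, this equivalence pushes back to arbitrary $g \in \aga$.

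Finally, for (1) $\Leftrightarrow$ (2), I would invoke the work of Behrstock--Charney, who show that an element of $\aga$ acts as a rank-$1$ isometry on $\Salv$ (equivalently, its axis is contracting, equivalently it is Morse) precisely when it is not conjugate into a join subgroup. Their argument produces a contracting axis for non-join elements and a flat half-plane or product region near the axis of a join element, obstructing rank-$1$ behavior in the latter case; this is the most substantive input, and I would expect any genuine difficulty in the proof to live here. With (1) $\Leftrightarrow$ (2), (1) $\Leftrightarrow$ (3), and (1) $\Leftrightarrow$ (4) all established, the theorem follows. The residual obstacle is a bookkeeping one: ensuring that ``join'' and ``cyclically reduced'' are being used in the same sense across the three cited results, and that each equivalence is stated for arbitrary $g$ rather than only for cyclically reduced representatives.
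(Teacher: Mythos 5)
The proposal is correct and takes essentially the same approach as the paper. The paper itself does not supply a proof of Theorem \ref{thm:loxodromics}; it explicitly presents it as a compilation of results from Servatius, Behrstock--Charney, and Kim--Koberda ``for easy reference to the reader,'' which is precisely the citation-synthesis you carry out, with the same source attributed to each equivalence.
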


A single loxodromic element of $\aga$ generates an infinite, purely loxodromic subgroup of $\aga$.  Items (4) and (5) of Theorem \ref{t:gex} allow us to produce many non-cyclic purely loxodromic subgroups:

\begin{prop}\label{p:dahguiosin}
Let $\lambda\in\aga$ be a loxodromic element for $\Gamma$ connected and anti-connected.  There is an $N$ which depends only on $\gam$ such that the normal closure $\Lambda_N$ of $\lambda^N$ in $\aga$ is purely loxodromic.
\end{prop}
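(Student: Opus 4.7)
The plan is to derive Proposition \ref{p:dahguiosin} directly from the acylindrical action $A(\Gamma) \curvearrowright \Gamma^e$ supplied by Theorem \ref{t:gex}(5), together with a general group-theoretic consequence of acylindricity due to Dahmani--Guirardel--Osin. By Theorem \ref{thm:loxodromics}, an element $g \in A(\Gamma)$ is loxodromic if and only if it acts as a loxodromic isometry of $\Gamma^e$, so the proposition reduces to finding $N = N(\Gamma)$ such that for every loxodromic $\lambda \in A(\Gamma)$, every nontrivial element of $\fform{\lambda^N}$ acts loxodromically on $\Gamma^e$.

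First I would apply the following theorem, which specializes results of Dahmani--Guirardel--Osin to the case of a single cyclic subgroup: if $G$ acts acylindrically on a $\delta$-hyperbolic space $X$ and $g \in G$ acts loxodromically, then the elementary closure of $g$ is virtually cyclic and hyperbolically embedded in $G$, and there exists $N_0$ such that for every $N \ge N_0$ the normal closure $\fform{g^N}$ is free and every nontrivial element of $\fform{g^N}$ acts loxodromically on $X$. Specialized to $G = A(\Gamma)$, $X = \Gamma^e$, and $g = \lambda$, this produces, for each loxodromic $\lambda$, a power $N_0(\lambda)$ that does the job.

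The main obstacle, and really the only nontrivial step, is upgrading $N_0(\lambda)$ to a uniform $N$ depending only on $\Gamma$. The constant $N_0(\lambda)$ produced by the Dahmani--Guirardel--Osin machinery depends on $\delta$ and on the acylindricity constants of $A(\Gamma) \curvearrowright \Gamma^e$ (all of which depend only on $\Gamma$), and on the stable translation length $\tau_{\Gamma^e}(\lambda)$ of $\lambda$ acting on $\Gamma^e$. To remove the last dependence I would invoke the standard fact, established by Bowditch in the mapping class group setting and extended by Osin to arbitrary acylindrical actions, that any acylindrical action on a hyperbolic space admits a positive lower bound on the stable translation lengths of its loxodromic elements. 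Applied to $A(\Gamma) \curvearrowright \Gamma^e$, this yields some $\tau_0 = \tau_0(\Gamma) > 0$ with $\tau_{\Gamma^e}(\lambda) \ge \tau_0$ for every loxodromic $\lambda$, and hence a single $N = N(\Gamma)$ valid for all loxodromic $\lambda$ simultaneously, completing the argument.
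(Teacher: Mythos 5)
Your proposal is correct and takes essentially the same route as the paper: the paper's proof consists precisely of noting that $A(\Gamma)$ acts acylindrically on the $\delta$-hyperbolic space $\Gamma^e$ and then citing Theorem~2.30 of Dahmani--Guirardel--Osin, which produces an $N$ depending only on the hyperbolicity and acylindricity constants of the action. The one place you are more careful than the paper is in making explicit why the constant can be taken independent of $\lambda$ itself: you observe that the Dahmani--Guirardel--Osin power a priori depends on the stable translation length of $\lambda$, and you then invoke the Bowditch/Osin lower bound on stable translation lengths in acylindrical actions to remove that dependence. The paper simply asserts the uniformity as part of what the citation delivers (and indeed such a lower bound is built into the acylindricity framework), but your spelling-out of this point is a legitimate and clarifying elaboration rather than a different argument.
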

\begin{proof}
We have that $\gex$ is a $\delta$--hyperbolic graph on which $\aga$ acts acylindrically.  By \cite{dahguiosin}, Theorem 2.30, we have $N\gg 0$ for which the normal closure $\Lambda_N$ of $\lambda^N$ is purely loxodromic and free, and this constant $N$ depends only on the hyperbolicity and acylindricity constants of the $\aga$ action on $\gex$.
\end{proof}

We remark that, if a finite set of loxodromic elements of $A(\Gamma)$ is pairwise independent (i.e., no two elements have common powers), then a standard argument in hyperbolic geometry proves that sufficiently high powers of these elements generate a purely loxodromic group.  Moreover, by following the arguments in \cite{dahguiosin}, one can verify a generalization of Propositon \ref{p:dahguiosin}: there exist powers of these elements whose normal closure (as a set) is purely loxodromic.

Proposition \ref{p:dahguiosin} shows that there are enough loxodromic elements in $\aga$ for them to be present in every nontrivial normal subgroup:

\begin{thm}\label{t:loxsubgroup}
Let $\gam$ be a connected and anti-connected graph with at least two vertices, and let $1\neq G<\aga$ be a normal subgroup.  Then $G$ contains a loxodromic element of $\aga$.
\end{thm}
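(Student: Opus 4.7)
The plan is to use Proposition \ref{p:dahguiosin} together with a short commutator argument. By item (6) of Theorem \ref{t:gex} there is at least one loxodromic element $\lambda \in A(\Gamma)$; fix such a $\lambda$ and let $N$ be the integer provided by Proposition \ref{p:dahguiosin}, so that the normal closure $\Lambda_N = \fform{\lambda^N}$ in $A(\Gamma)$ is purely loxodromic. The strategy is then, given any nontrivial $h \in G$, to produce a loxodromic element of $G$ by examining the commutator $[h,\lambda^N]$.

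I would split into two cases. Suppose first that $h$ and $\lambda^N$ do not commute, and set
\[
c \;=\; [h,\lambda^N] \;=\; h\lambda^N h^{-1}\lambda^{-N} \;\neq\; 1.
\]
Writing $c = h\cdot(\lambda^N h^{-1}\lambda^{-N})$ shows $c \in G$ by normality of $G$, while the alternative grouping $c = (h\lambda^N h^{-1})\cdot \lambda^{-N}$ exhibits $c$ as a product of a conjugate of $\lambda^N$ with $\lambda^{-N}$, placing $c$ in $\Lambda_N$. Since $\Lambda_N$ is purely loxodromic and $c \neq 1$, this $c$ is the desired loxodromic element of $G$.

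In the remaining case, $h \in C_{A(\Gamma)}(\lambda^N)$. Here I would invoke Theorem \ref{thm:loxodromics}: the centralizer of the loxodromic element $\lambda^N$ is cyclic, say generated by some $\mu \in A(\Gamma)$. Then $h = \mu^k$ for some nonzero $k$, and moreover $\mu$ must itself be loxodromic, since otherwise item (6) of Theorem \ref{t:gex} would force $\mu$, and hence all of its powers including $\lambda^N$, to be conjugate into a single join subgroup of $A(\Gamma)$, contradicting the loxodromicity of $\lambda^N$. Hence $h$ is a nontrivial power of a loxodromic and is itself loxodromic, providing the needed loxodromic element of $G$.

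I do not foresee any significant obstacle: the essential work is already carried out in the cited results. The existence of $\lambda$ and of a purely loxodromic normal closure $\Lambda_N$ is provided by Theorem \ref{t:gex} and Proposition \ref{p:dahguiosin}, while the cyclicity of the centralizer of $\lambda^N$ used in the commuting case comes directly from Theorem \ref{thm:loxodromics}.
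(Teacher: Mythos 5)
Your proof is correct and takes essentially the same approach as the paper: both consider the commutator $[h,\lambda^N] \in G \cap \Lambda_N$, using that $\Lambda_N$ is purely loxodromic when the commutator is nontrivial, and the cyclicity of $C_{A(\Gamma)}(\lambda^N)$ when it is trivial. The only difference is that you spell out the centralizer argument a bit more explicitly than the paper's terse ``these two elements have a common power.''
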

It is very easy to produce counterexamples to Theorem \ref{t:loxsubgroup} if one omits the hypothesis that $\gam$ be anti-connected.
\begin{proof}[Proof of Theorem \ref{t:loxsubgroup}]
Let $1\neq g\in G$ be any element, and let $\lambda\in\aga$ be loxodromic.  Let $N$ be the constant furnished by Proposition \ref{p:dahguiosin}.  Consider the element $[g,\lambda^N]\in G\cap\Lambda_N$.  If this commutator is the identity then $g$ commutes with $\lambda^N$, so that these two elements have a common power.  In particular, $g$ is loxodromic.  If the commutator is not the identity then it represents a nontrivial loxodromic element of $G$, since $\Lambda_N$ is purely loxodromic.
\end{proof}

We remark that Behrstock, Hagen, and Sisto~\cite{behrhagsis} have very recently introduced a different curve complex analogue, called the contact graph, associated to RAAGs and more generally CAT(0) cubical groups.  Such a group's action on its contact graph may be used to define ``loxodromic" elements in the group.  In the RAAG case, the contact graph $\mathcal{C}\Salv$ is equivariantly quasi-isometric to the extension graph $\Gamma^e$, except for in a few sporadic cases.  Therefore both definitions of loxodromic elements coincide, and one may add to Theorem \ref{t:main} a statement (1'), replacing $\Gamma^e$ with $\mathcal{C}\Salv$.

Finally, we note the work of the third author and Tiozzo~\cite{TaylorTiozzo}, which proves that typical subgroups of appropriate $A(\Gamma)$ are purely loxodromic.  More precisely, let $\mu$ be a probability measure on $\aga$ whose support generates $\aga$ as a semigroup, and consider $k$ independent random walks on $\aga$ whose increments are distributed according to $\mu$.  They prove that the subgroup generated by the $n^{th}$ step of each random walk is purely loxodromic with probability going to $1$ as $n$ goes to infinity.

\section{Disk diagrams in RAAGs}\label{s:dds}
Our main technical tool is the disk diagrams introduced in \cite{CrispWiest} (as ``dissections of a disk'') and further detailed in \cite{Kim}.  We present the general theory here for the convenience of the reader.

\subsection{Construction of a disk diagram}\label{s:construct}
Suppose that $w$ is a word in the vertex generators of $A(\Gamma)$ and their inverses which represents the identity element in $A(\Gamma)$.  A disk diagram for $w$ is a combinatorial tool indicating how to reduce $w$ to the empty word.  Before giving a formal definition, we show how to construct such a diagram.

\begin{figure}[H]
\begin{center}
\includegraphics[height=2in]{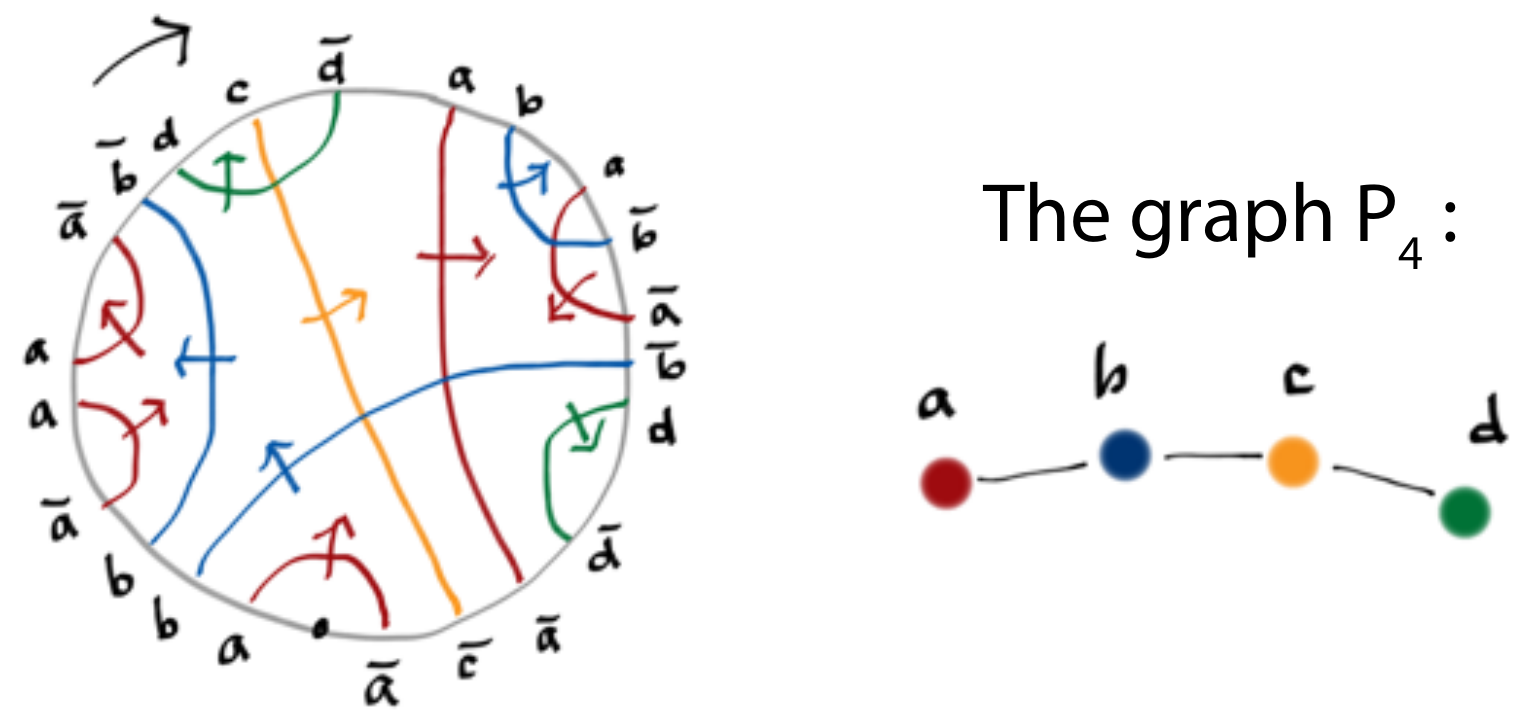}
\caption{On the left, a disk diagram for the identity word $abb\bar{a}aa\bar{a}\bar{b}dc\bar{d}aba\bar{b}\bar{a}\bar{b}d\bar{d}\bar{a}\bar{c}\bar{a}$ in $A(P_4)$, with $P_4$ shown on the right.  Here $\bar{x}$ is the inverse of $x$.}
\label{dd1}
\end{center}
\end{figure}

Consider the loop in $\salv$ determined by the word $w$, which we continue to denote by $w$. This is the map $w: S^1 \to \salv$ that, starting at a basepoint in $S^1$, parameterizes an edge path in $\salv$ spelling the word $w$. (Throughout this paper, an edge path in $\salv$, or its universal cover, \emph{spells} a word $w$ if $w$ is the word obtained by recording the labels on the oriented edges traversed by the path according to its orientation.) Since $w$ represents the trivial word in $A(\Gamma)$, this map is null homotopic in $\salv$. Denote by $f: D \to \salv$ an extension of the loop $w$ to the unit disk, and homotope $f$ rel $\partial D$ so that it is transverse to the hyperplanes  of $\salv$. After doing so, all preimages of hyperplanes through $f$ consist of simple closed curves and properly embedded arcs. Using standard techniques in combinatorial topology, perform a further homotopy rel $\partial D$ to remove all simple closed curve components in the preimage and to ensure that no properly embedded arcs intersect more than once. After performing such a homotopy, the preimage of each hyperplane in $\salv$ is a collection of properly embedded disjoint arcs. 

For the hyperplane $H_v$ in $\salv$ dual to the edge $e_v$ representing $v$, there are two co-orientations corresponding to the two orientations of the edge $e_v$. We label each of these two co-orientations on $H_v$ either $v$ or $v^{-1}$ corresponding to the label on the oriented edge. By pulling back the co-orientation of $H_v$, we likewise label the co-orientation of each arc in the preimage of $H_v$. This is to say that an arc in the preimage of $H_v$ is labeled with both the symbol $v$ and a transverse arrow. Crossing the arc in the direction indicated by the arrow corresponds to reading the label $v$ and crossing in the reverse direction reads the label $v^{-1}$. The oriented disk $D$ together with this collection of labeled, co-oriented properly embedded arcs is called a disk diagram for $w$ and is denoted by $\Delta$. See Figure \ref{dd1}.

\subsection{Formal definition}\label{s:define}
Our disk diagrams follow the definition in \cite{Kim}. For a word $w$ representing the trivial element in $A(\Gamma)$, a \emph{disk diagram} (or dual van Kampen diagram) $\Delta$ for $w$ in $A(\Gamma)$ is an oriented disk $D$ together with a collection $\mathcal{A}$ of co-oriented, properly embedded arcs in general position, satisfying the following:
\begin{enumerate}
\item Each co-oriented arc of $\mathcal{A}$ is labeled by an element of $V(\Gamma)$.  Moreover, if two arcs of $\mathcal{A}$ intersect then the generators corresponding to their labels are adjacent in $\Gamma$.
\item With its induced orientation, $\partial D$ represents a cyclic conjugate of the word $w$ in the following manner: there is a point $* \in \partial D$ such that $w$ is spelled by starting at $*$, traversing $\partial D$ according to its orientation, and recording the labels of the arcs of $\mathcal{A}$ so encountered, \emph{according to their co-orientation}.  The latter means that, at each intersection of an arc of $\mathcal{A}$ with $\partial D$, the corresponding element of $V(\Gamma)$ is recorded if the co-orientation of the arc coincides with the orientation of $\partial D$, and its inverse is recorded otherwise.
\end{enumerate} 

We think of the boundary of $D$ as subdivided into edges and labeled according to the word $w$, with edges oriented consistently with $\partial D$ if that edge is labeled by an element of $V(\Gamma)$, and oppositely oriented otherwise, so that the label on an oriented edge in this subdivision agrees with the label on the co-oriented arc of $\mathcal{A}$ meeting this edge. In this way, each arc of $\mathcal{A}$ corresponds to two letters of $w$ which are represented by oriented edges on the boundary of $\Delta$.


While not required by the definition, it is convenient to restrict our attention to \emph{tight} disk diagrams, in which arcs of $\mathcal{A}$ intersect at most once.

\subsection{Interchangeable interpretations}
In Section \ref{s:construct}, we saw that any word representing the identity has a disk diagram induced by a null-homotopy $f:D\to\salv$.  Here we show how any disk diagram can be induced by such a map.  This uses the duality, observed in \cite{CrispWiest} and detailed in \cite{Kim}, between disk diagrams and the more familiar notion of van Kampen diagrams.  For a general discussion of the latter, see \cite{Sapir} or \cite{LyndonSchupp}.  We recall the relevant details here.

A \emph{van Kampen diagram} $X \subset S^2$ for a word $w$ representing the identity in $A(\Gamma)$ is a simply connected planar $2$--complex equipped with a combinatorial map into $\salv$ whose boundary word is $w$. Such a map may be expressed by labeling the oriented $1$--cells of the diagram $X$ with vertex generators and their inverses in such a way that every $2$--cell represents a (cyclic conjugate of) a relator of $A(\Gamma)$, i.e. a $2$--cell of $\salv$.  The boundary word of $X$ is the cyclic word obtained by reading the labels of $\partial X$, and $X$ represents a null-homotopy of its boundary.

\begin{figure}[H]
\begin{center}
\includegraphics[height=1.75in]{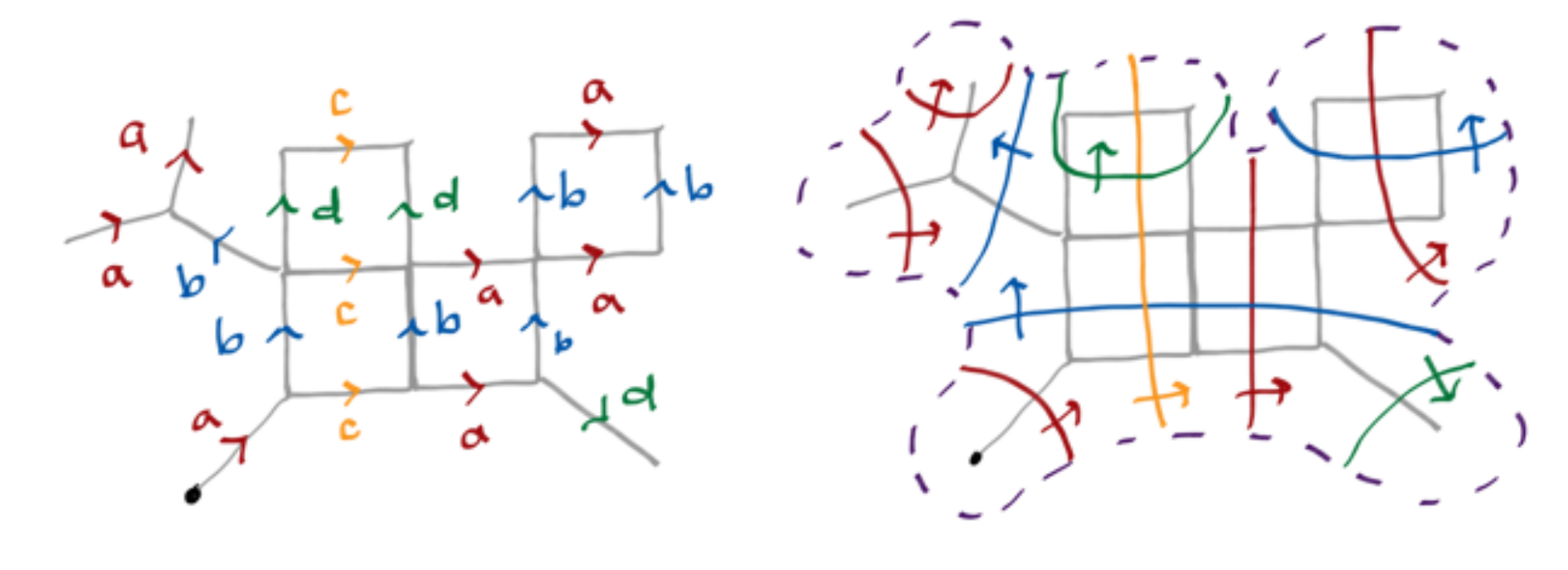}
\caption{On the left, a van Kampen diagram $X$ for the identity word shown in Figure \ref{dd1}, whose disk diagram $\Delta$ is repeated on the right to illustrate their duality.  If the arcs $\mathcal{A}$ of $\Delta$ are extended to a single vertex $v$ in the complement of $\Delta$ in $S^2$, the resulting graph $\mathcal{A}\cup v$ is dual to $X$; compare to Figure 1 in \cite{Kim}.}
\label{dd2}
\end{center}
\end{figure}

While $X$ itself is not necessarily a disk, it has a small neighborhood in $S^2$ which deformation retracts onto $X$, and which is a disk $D$.  The composition of the retract from $D$ to $X$ and the map of $X$ into $\salv$ gives a map $f:D \to \salv$ such that pre-images of hyperplanes are properly embedded arcs in $D$ which induce a disk diagram for $w$.

It is shown in \cite{Kim} that any disk diagram for $w$ is dual in $S^2$ to a van Kampen diagram with boundary word $w$, and vice versa.  In particular, given a disk diagram, one obtains a van Kampen diagram by simply taking its dual complex; see Figure \ref{dd2}.

\subsection{Surgery and subwords}\label{s:surger}
Starting with a disk diagram $\Delta$ for $w$, one can often extract useful information about a subword of $w$ by surgering $\Delta$ along properly embedded arcs. In details, suppose that $\gamma$ is a properly embedded, oriented arc in $\Delta$ which is either an arc of $\mathcal{A}$ or transverse to the arcs of $\mathcal{A}$. Traversing $\gamma$ according to its orientation and recording the labels of those arcs of $\mathcal{A}$ so crossed according to their co-orientation spells a word $y$ in the standard generators. We say the word $y$ is obtained from \emph{label reading} along $\gamma$.

In particular, starting with a subword $w'$  of $w$, any oriented arc $\gamma$ of $D$ which begins at the initial vertex of $w'$ and ends at the terminal vertex of $w'$ produces a word $y$ via label reading such that $w=y$ in $A(\Gamma)$. To see this, note that the arc $\gamma$ cuts the disk $D$ into two disks $D'$ and $D''$, one of which (say $D'$) determines the homotopy (and sequence of moves) to transform the word $w'$ into $y$. This is to say that the disk $D'$ along with arcs coming from $\Delta$ is a disk diagram for the word $w'\bar{y}$, and we say that this diagram is obtained via \emph{surgery} on $\Delta$. This simple observation will be important to our arguments.

Here let us record a few lemmas enabled by surgery.  The first appears as Lemma 2.2 in \cite{Kim}.

\begin{lem}\label{l:starsurgery}Suppose an arc of $\mathcal{A}$ in a disk diagram $\Delta$ for the identity word $w$ cuts off the subword $w'$, i.e., $w \equiv svw'v^{-1}t$, where $s, w',$ and $t$ are subwords and $v,v^{-1}$ are letters at the ends of the arc.  Then $w' \in A(\st(v))$.
\end{lem}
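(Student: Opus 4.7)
\bigskip
\noindent\textbf{Proof proposal.}
The plan is to recognize the subdisk of $D$ cut off by $\alpha$ as a disk diagram in its own right, and to read off its boundary word to conclude that $w'$ already lies in the subgroup $A(\lk(v))$, which is contained in $A(\st(v))$. This uses the surgery viewpoint of Subsection \ref{s:surger} together with the definition of a disk diagram in Subsection \ref{s:define}.

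First, let $D'$ denote the subdisk of $D$ bounded by $\alpha$ and the subarc $\tau$ of $\partial D$ spelling $w'$ (so $\partial D'=\tau\cup\alpha$, and the endpoints of $\alpha$ are the boundary letters $v$ and $v^{-1}$). I would then classify arcs of $\mathcal{A}$ with an endpoint on $\tau$ into two types: (i) those lying entirely in $D'$, with both endpoints on $\tau$, and (ii) those crossing $\alpha$, with one endpoint on $\tau$ and one on the complementary arc of $\partial D$. Tightness guarantees a type (ii) arc crosses $\alpha$ exactly once, and the disk diagram axiom that only arcs with adjacent labels intersect forces the label of any such arc to commute with $v$; equivalently, its label lies in $\lk(v)$.

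The key step is to treat $D'$ together with the arcs of $\mathcal{A}$ restricted to $D'$ as a disk diagram. Its interior arcs are the type (i) arcs plus the portions of type (ii) arcs inside $D'$, each of which has one endpoint on $\tau$ and one on the new boundary segment $\alpha$. One checks that this satisfies the axioms of Subsection \ref{s:define}, with all labels, intersections, and co-orientations inherited from $\Delta$. Reading the boundary word of $D'$ along $\partial D'$: along $\tau$ we read exactly $w'$, while along $\alpha$ we read a word $z$ whose letters come precisely from type (ii) arcs meeting $\alpha$, so every letter of $z$ lies in $\lk(v)$ and $z\in A(\lk(v))$. Since every disk diagram represents the identity element (cf.\ the van Kampen duality in Subsection 3.3), the boundary word $w'\cdot z$ of $D'$ equals $1$ in $A(\Gamma)$, yielding $w'=z^{-1}\in A(\lk(v))\subseteq A(\st(v))$.

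The main obstacle is verifying cleanly that $D'$ is a bona fide disk diagram and that its boundary word is $w'\cdot z$. The delicate point is conventions at the two corners of $\partial D'$ where $\alpha$ meets $\tau$: in $\Delta$ the arc $\alpha$ contributes the two letters $v$ and $v^{-1}$ at those corners, but in the subdiagram $D'$ the arc $\alpha$ is part of the boundary and therefore contributes no interior-arc letters; the letters along $\alpha$ come solely from its transverse intersections with other arcs. Once this bookkeeping is set up, the conclusion is immediate, and we in fact obtain the slightly stronger statement $w'\in A(\lk(v))$, which is consistent with the observation that any letter $v$ or $v^{-1}$ appearing inside $w'$ must be paired (by an interior arc of $D'$) with a partner also inside $w'$, hence cancels in the reduced form.
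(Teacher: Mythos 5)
Your proof is correct, and it is exactly the surgery argument the paper has in mind when it refers to Subsection 3.4 and cites Kim (the paper itself omits the proof): the subdisk $D'$ is a disk diagram whose boundary word is $w'$ concatenated with a word $z$ read along $\alpha$; since any arc crossing $\alpha$ must carry a label adjacent to $v$, we get $z \in A(\lk(v))$, hence $w' = z^{-1}\in A(\lk(v))\subseteq A(\st(v))$. You in fact establish the slightly stronger conclusion $w'\in A(\lk(v))$, which is consistent (as you note, no arc labelled $v$ can cross $\alpha$, so every $v^{\pm1}$ inside $w'$ is absorbed internally) and of course implies the stated claim; your appeal to tightness is merely a convenience, since even without it a crossing arc contributes only $\lk(v)$-letters to $z$.
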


We omit the proof, which is straightforward, and can be found in \cite{Kim}.

If a subword in a disk diagram has the property that no two arcs emanating from it intersect, let us say that this subword is \emph{combed} in the disk diagram.  The property of being combed will be important in Sections \ref{s:ddapps} and \ref{s:jbusting}.

\begin{lem}\label{l:comb}
Suppose $w$ is a word representing the identity and $b$ is a subword of $w$, so $w$ is the concatenation of words $a,b,$ and $c$.  Let $\Delta$ be a disk diagram for $w$.

Then there exists a word $b'$ obtained by re-arranging the letters in $b$, such that $b' = b$ and there exists a disk diagram $\Delta'$ for $ab'c$ in which $b'$ is combed, arcs emanating from $b'$ have the same endpoint in the boundary subword $ca$ as their counterpart in $b$, and arcs that both begin and end in $ca$ are unchanged in $\Delta'$.  

Furthermore, there exists a word $b''$ obtained by deleting letters in $b'$, such that $b''=b$ and there exists a disk diagram $\Delta''$ for $ab''c$ which is precisely $\Delta'$ without the arcs corresponding to the deleted letters.
\end{lem}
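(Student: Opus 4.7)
My plan is to prove the first claim (combing) by induction on the number of crossings in $\Delta$ between pairs of arcs having at least one endpoint in $b$. In the base case of no such crossings, set $b' = b$ and $\Delta' = \Delta$. For the inductive step, I construct a rearrangement $b^{(1)}$ of $b$, by which I mean a word with the same multiset of letters as $b$ and with $b^{(1)} = b$ in $A(\Gamma)$, together with a disk diagram $\Delta^{(1)}$ for $ab^{(1)}c$ that preserves the $ca$-endpoints of $b$-arcs and the $ca$-to-$ca$ arcs of $\Delta$, and has strictly fewer crossings of the tracked type.

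To produce the rearrangement, pick a crossing pair of $b$-arcs $(\alpha_p, \alpha_q)$ with $p < q$ and $q - p$ minimal. Minimality forces each $\alpha_k$ with $p < k < q$ to cross neither $\alpha_p$ nor $\alpha_q$, hence to terminate at some $b_l$ with $p < l < q$; these inner arcs thus form a non-crossing matching of $\{b_{p+1}, \ldots, b_{q-1}\}$ inside the wedge bounded by $\alpha_p$, $\alpha_q$, and the boundary segment from $b_p$ to $b_q$. A surgery argument as in Subsection \ref{s:surger}, applied to a path $\gamma$ from just past $b_p$ to just before $b_q$ drawn through the outermost region of the wedge (which meets no arc at all), shows that the subword $T := b_{p+1}\cdots b_{q-1}$ represents the identity in $A(\Gamma)$. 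Since $T = 1$, the words $\cdots b_p T b_q \cdots$ and $\cdots b_p b_q T \cdots$ are equal in $A(\Gamma)$ and share the same multiset of letters, so the latter is a legitimate rearrangement that places $b_p$ and $b_q$ at adjacent positions. By the defining property of disk diagrams, the labels of $\alpha_p, \alpha_q$ commute in $A(\Gamma)$, so one further adjacent swap produces $b^{(1)}$. I construct $\Delta^{(1)}$ by keeping arcs outside the wedge unchanged, relocating the inner matched arcs to the new position of $T$, and redrawing $\alpha_p, \alpha_q$ so they emanate from the swapped adjacent positions with the same $ca$-endpoints as before; the resulting diagram is valid and has one fewer $b$-arc crossing.

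For the deletion claim, given a combed $\Delta'$, the arcs with both endpoints in $b'$ form a non-crossing matching along $b'$. Hence there exists an innermost such arc connecting two adjacent letters of $b'$, which by Lemma \ref{l:starsurgery} (applied to the empty subword between them, which lies trivially in any star subgroup) must be a canceling pair $vv^{-1}$. Deleting both letters together with their arc produces a disk diagram for the shorter word that remains combed and still has the stipulated $ca$-data; iterating yields $b''$ and $\Delta''$.

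The main technical obstacle lies in verifying that $\Delta^{(1)}$ is indeed a valid disk diagram with the required properties: one must check that every crossing in $\Delta^{(1)}$ involves commuting labels, that no new $b$-arc crossings appear beyond the resolved pair, and that the relocated inner matched arcs can be drawn compatibly with the outer arcs and the unchanged $ca$-side of $\Delta$. This requires a careful local analysis near the wedge, relying on the nested structure of the inner matching and on the fact that the $ca$-endpoints of all $b$-to-$ca$ arcs are preserved throughout.
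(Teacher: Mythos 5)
Your high-level strategy matches the paper's: induct on the number of crossings among arcs from $b$, locate a crossing pair that is "innermost" in a suitable sense, surger to see that the intervening subword represents the identity, rearrange letters within $b$, and iterate; the deletion step is likewise identical. Your choice of the crossing pair $(\alpha_p, \alpha_q)$ with $q - p$ minimal is a fine substitute for the paper's "innermost intersection," and in fact a short Jordan-curve argument shows it yields the stronger conclusion the paper uses (no $T$-arc crosses \emph{any} other arc, not merely $\alpha_p$ and $\alpha_q$), which is what justifies drawing a surgery path disjoint from $\mathcal{A}$.

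However, two things are off. First, your claim that ``the outermost region of the wedge... meets no arc at all'' is false in general: a through arc can cross both proximal segments of $\alpha_p$ and $\alpha_q$ and separate the crossing point from the $T$-segment. The surgery path can still be chosen to avoid such arcs (hug the outer boundary of the nested $T$-fingers, staying on the $T$-segment side of every through arc), but the justification you give is not the right one. Second, and more importantly, your construction of $\Delta^{(1)}$ does the real work of the lemma and it is not carried out — you acknowledge this yourself in your final paragraph. Your rearrangement $b_p T b_q \mapsto b_q b_p T$ moves the subword $T$ out of the wedge and then relies on an unspecified ``redrawing'' of $\alpha_p$ and $\alpha_q$ for which one must check both validity and a strict decrease in the number of crossings. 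The paper sidesteps all of this with a much more local move: it keeps $T$ (the paper's $y$) exactly where it is, replacing $\alpha_p$ and $\alpha_q$ by the two arcs obtained by swapping their \emph{distal} halves at the crossing point (see Figure~3 in the paper). Because every arc crossing the proximal part of $\alpha_p$ also crosses the proximal part of $\alpha_q$ (a consequence of tightness plus the established fact that $T$-arcs cross nothing), this resolution produces a manifestly valid tight diagram with exactly one fewer $b$-arc crossing, and no relocated arcs whose new positions one would have to audit. That proximal/distal swap is the key device you are missing; without it, or without a correspondingly careful case analysis of where the new $\alpha_p$, $\alpha_q$ and relocated $T$-arcs can go, the inductive step is incomplete.
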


\begin{proof}Start with any tight disk diagram $\Delta$ for $w$ and consider the arcs emanating from $b$.  If a pair of arcs from $b$ intersect, they cut out a piece of the disk in which one can find an \emph{innermost} intersection between arcs from $b$, meaning arcs emanating from letters $x$ and $z$ so that $b$ decomposes as $sxyzt$, no arc from a letter in the subword $y$ intersects any other arc, and arcs from $x$ and $z$ intersect at a point $p$.  This means that all arcs that start in $y$ end in $y$.  Consequently, one can divide the disk with an arc $\gamma$ which is disjoint from $\mathcal{A}$ and which divides the boundary into two components, one of which corresponds to $y$.  Surgery using $\gamma$ shows that $y$ equals the identity.  Then $sxyzt = szyxt$.  Let $b_1 \equiv szyxt$ and let $w_1 \equiv ab_1c$.

We now describe a tight disk diagram $\Delta_1$ for $w_1$ in which the number of intersections among arcs emanating from $b_1$ is one less than the number of intersections among arcs emanating from $b$ in the original disk diagram.  In $\Delta$, divide the arcs from $x$ and $z$ into proximal and distal segments, corresponding respectively to the part of the arc between $b$ and the intersection point $p$ and the part of the arc between $p$ and its other endpoint on the boundary.  Because $\Delta$ is tight, an arc intersects the proximal part of the $x$ arc if and only if it intersects the proximal part of the $z$ arc.

Let the arc $z'$ consist of the proximal part of $x$ and the distal part of $z$, and let $x'$ consist of the proximal part of $z$ and the distal part of $x$.  It is evident that $x'$ and $z'$ may be perturbed in a neighborhood of $p$, to be disjoint.  Then $\Delta_1$ is obtained by switching boundary letters $x$ and $z$, and replacing their corresponding arcs with the arcs $x'$ and $z'$.  See Figure \ref{dd3}.  By repeating this process as many times as the original number of intersections between arcs from $b$ in $\Delta$, we obtain a disk diagram $\Delta_n$ for $w_n$ in which the subword $b_n$ is combed, and $b_n$ is equal to $b$ and obtained by rearranging its letters.

To obtain $b''$ and $\Delta''$, delete arcs with both ends in $b'$, by again finding innermost such arcs, which come from subwords of the form $vv^{-1}$. 
\end{proof}

\begin{figure}[H]
\begin{center}
\includegraphics[height=1.5in]{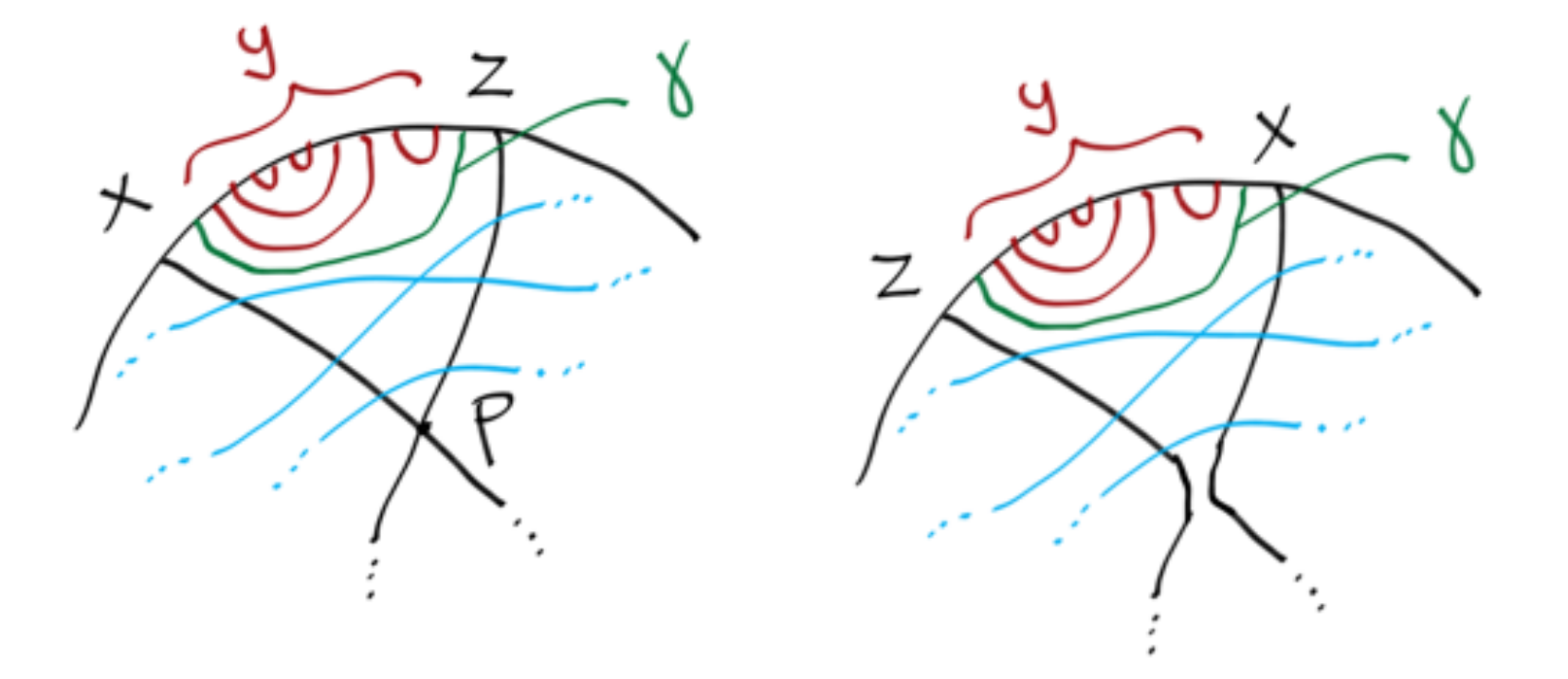}
\caption{On the left, part of $\Delta$; on the right, part of $\Delta_1$.}
\label{dd3}
\end{center}
\end{figure}

\subsection{Reducing diagrams}\label{s:reduce}
In our applications, disk diagrams will come with some additional structure.  Let $h$ be a word in the vertex generators of $A(\Gamma)$, which is not assume to be reduced in any sense. Let $w$ denote a reduced word in the vertex generators which represents the same group element, i.e. $w=h$ in $A(\Gamma)$. Then,  the word $h\bar{w}$ represents the identity in $A(\Gamma)$ and so it is the boundary of some disk diagram $\Delta$. (Here $\bar{w}$ denotes the inverse of the word $w$.) In this situation, we record that the boundary of $\Delta$ consist of two words $h$ and $\bar{w}$ and we draw the diagram as in Figure \ref{rdd}. We sometimes refer to a disk diagram constructed in this way as a \emph{reducing diagram} as it represents a particular way of reducing $h$ to the reduced word $w$. For such disk diagrams, $\partial D$ is divided into two subarcs (each a union of edges) corresponding to the words $h$ and $w$;  we call these subarcs the $h$ and $w$ subarcs, respectively.

\begin{figure}[H]
\begin{center}
\includegraphics{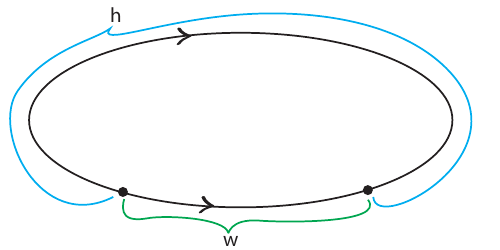}
\caption{Segmented orientation in a reducing diagram.}
\label{rdd}
\end{center}
\end{figure}

Suppose that $\Delta$ is a disk diagram that reduces $h$ to the reduced word $w$. Since $w$ is already a reduced word, no arc of $\mathcal{A}$ can have both its endpoints on the $w$ subarc of $\partial D$.  Otherwise, one could surger the diagram to produce a word equivalent to $w$ with fewer letters. Hence, each arc of $\mathcal{A}$ either has both its endpoints on the subarc of $\partial D$ corresponding to $h$, or it has one endpoint in each subarc of $\partial D$. In the former case, we call the arc (and the letters of $h$ corresponding to its endpoints) \emph{noncontributing} since these letters do not contribute to the reduce word $w$. Otherwise, the arc is called \emph{contributing} (as is the letter of $h$ corresponding the endpoint contained in the $h$ subarc of $\partial D$).  If $h$ as a word is partitioned into a product of subwords $abc$, then the \emph{contribution of the subword $b$ to $w$} is the set of letters in $b$ which are contributing. We remark that whether a letter of $h$ is contributing or not is a property of the fixed disk diagram that reduces $h$ to $w$.

For Section \ref{s:neumann}, we loosen the definition of a reducing diagram so that the word $w$ is only partially reduced.  That is, rather than requiring $w$ be minimal length in $A(\Gamma)$, we simply require that no arc in $\Delta$ has both endpoints in $w$.  Then it is clear that $w$ still has length in $A(\Gamma)$ no longer than that of $h$.

\section{Disk diagram applications}\label{s:ddapps}
From here on, we assume that $\Gamma$ is a connected and anti-connected simplicial graph.
In this section we consider elements of $H$ a finitely generated, star-free subgroup of $A(\Gamma)$; such subgroups include but are more general than purely loxodromic subgroups.  In Section \ref{s:neumann} we include a short proof that star-free subgroups are necessarily free, but we do not require that knowledge here.  We do note that, by finite generation, the inclusion of $H$ into $A(\Gamma)$ is proper with respect to any word metric on both groups. 

Here we employ disk diagrams, described in Section \ref{s:dds}, to derive several useful lemmas about finitely generated, star-free subgroups.  Fix a finite generating set $S$ for $H$ and a reduced word representative for each generator in $S$.  An element $h \in H$ can be expressed as a geodesic word in $H$, that is, $h = h_1\cdots h_n$ such that $h_i \in S$ and $n$ is minimal.  We use a disk diagram with boundary word $(h_1\cdots h_n)h^{-1}$, where $h$ and each $h_i$ are written as reduced words in $A(\Gamma)$.  In other words, we concatenate the reduced word representatives for the $h_i$ to obtain a word representing $h=h_1\cdots h_n$ and consider a reducing diagram for this word. With our choices fixed, we call such a reducing diagram for $h$ simply a disk diagram for $h\in H$. Recall that every letter in the disk diagram is connected by an arc to its inverse somewhere else in the disk diagram, and if arcs intersect, the corresponding generators commute.

\begin{lem}[Cancellation diameter]\label{l:cancel diam}  Suppose $H$ is a finitely generated, star-free subgroup of $A(\Gamma)$.  There exists $D=D(H)$ with the following property:  If in a disk diagram for $h \in H$, a letter in $h_i$ is connected to a letter in $h_j$, then $j-i < D$.
\end{lem}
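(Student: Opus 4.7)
The plan is a contradiction argument powered by pigeonhole on finite combinatorial data. Suppose no such $D$ exists. Then for each $n \in \mathbb{N}$ one can find an element $h^{(n)} \in H$, a geodesic expression $h^{(n)} = h_1^{(n)} \cdots h_{k_n}^{(n)}$ with letters in $S \cup S^{-1}$, and a disk diagram for $h^{(n)}$ containing an arc of $\mathcal{A}$ joining a letter in $h_i^{(n)}$ to a letter in $h_j^{(n)}$ with $j - i \geq n$. The goal is to convert such an arc, for large $n$, into a nontrivial element of $H$ that is conjugate into a star subgroup of $A(\Gamma)$, contradicting star-freeness.

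For each offending arc, let $v \in V(\Gamma)$ be its label, let $a$ be the suffix of the chosen reduced $A(\Gamma)$-word for $h_i^{(n)}$ occurring after the endpoint letter $v$, and let $b$ be the prefix of the chosen reduced word for $h_j^{(n)}$ occurring before the endpoint letter $v^{-1}$. The arc cuts off the boundary subword $a \cdot h_{i+1}^{(n)} \cdots h_{j-1}^{(n)} \cdot b$, so Lemma \ref{l:starsurgery} yields the equality
\[
a \cdot g^{(n)} \cdot b \;=\; w^{(n)} \;\in\; A(\st(v)) \qquad \text{in } A(\Gamma), \qquad g^{(n)} := h_{i+1}^{(n)} \cdots h_{j-1}^{(n)} \in H.
\]
Since any contiguous subword of an $H$-geodesic is itself $H$-geodesic (else splicing in a shorter subexpression would contradict geodesicity of $h^{(n)}$), the $H$-word length of $g^{(n)}$ is at least $n-1$. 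Combined with finite generation of $H$ (so balls in the Cayley graph of $H$ are finite), this forces $\{g^{(n)}\}$ to contain infinitely many distinct elements of $H$. Meanwhile, the triple $(v, a, b)$ ranges over a finite set, because $V(\Gamma)$ is finite and $a, b$ are proper suffixes or prefixes of one of finitely many fixed reduced representatives of elements of $S$.

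Passing to a subsequence, I may therefore assume $(v, a, b)$ is constant in $n$ while the $g^{(n)}$ remain pairwise distinct. Picking any two indices with $g^{(n)} \neq g^{(m)}$, the relation $a g^{(k)} b = w^{(k)}$ gives
\[
g^{(n)} \bigl(g^{(m)}\bigr)^{-1} \;=\; a^{-1}\bigl( w^{(n)} (w^{(m)})^{-1} \bigr) a,
\]
exhibiting the nontrivial element $g^{(n)}(g^{(m)})^{-1} \in H$ as a conjugate of an element of $A(\st(v))$, contradicting star-freeness. The main obstacle I anticipate is isolating exactly the right finite data $(v,a,b)$ to pigeonhole: the arc's endpoints lie on boundary arcs of unbounded length, and it is only the bounded $A(\Gamma)$-length of the fixed generators of $H$ that forces the ``side data'' flanking the label $v$ to range over a finite set. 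Once this observation is in hand, Lemma \ref{l:starsurgery} and the definition of star-free do the rest.
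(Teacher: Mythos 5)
Your proof is correct and follows essentially the same strategy as the paper's: apply Lemma \ref{l:starsurgery} to the offending arc to land a piece of the word in a star subgroup, pigeonhole on the finitely many possible flanking subwords of the fixed $H$-generators (and on the finitely many vertices of $\Gamma$), then take the difference of two members of the resulting subsequence to produce a nontrivial element of $H$ conjugate into a star. The only cosmetic difference is that you peel off the entire generators $h_i$, $h_j$ (tracking the flanking data $a$, $b$ separately), whereas the paper keeps $h_i\cdots h_j \in H$ intact and writes it as $\sigma M \tau$ with $\sigma$ a prefix of $h_i$ and $\tau$ a suffix of $h_j$; the resulting pigeonhole and the final conjugation computation are identical.
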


\begin{proof}
Suppose in a disk diagram for $h \in H$, a letter $g$ in $h_i$ is connected to a letter $g^{-1}$ in $h_j$.  By Lemma \ref{l:starsurgery}, $h_i\cdots h_j = \sigma M\tau$, where $M$ is in the star of $g$, and $\sigma,\tau$ are a prefix of $h_i$ and suffix of $h_j$ respectively.

Therefore, if the lemma is false, there a sequence of reduced-in-$H$ words
\[h^t_{i(t)}\cdots h^t_{j(t)} = \sigma_tM_t\tau_t\]
as above, with $j(t)-i(t)$ increasing.  Because $\Gamma$ is finite and $H$ is finitely generated, we may pass to a subsequence so that the $M_t$ are in the star of the same generator, and furthermore we have constant $\sigma_t = \sigma$ and $\tau_t = \tau$, while $M_t \neq M_s$ for $s \neq t$.  This last claim follows from properness of the inclusion $H \to \aga$. Then for any $s \neq t$,
\[h^s_{i(s)}\cdots h^s_{j(s)}(h^t_{i(t)}\cdots h^t_{j(t)})^{-1} = \sigma M_sM_t^{-1}\sigma^{-1}\]
which is conjugate into a star, a contradiction.
\end{proof}

Consider $h_i \cdots h_j$ a subword of $h = h_1\cdots h_n$ reduced in $H$ as above.  For $D$ the constant of Lemma \ref{l:cancel diam}, we have

\begin{lem}\label{l:bddSWT}  The element $h_i\cdots h_j\in A(\Gamma)$ may be written as a concatenation of three words $\sigma W \tau$, where the letters occurring in $\sigma$ are a subset of the letters occurring in $h_{i-D}\cdots h_{i-1}$ when $i > D$, and in $h_1\cdots h_{i-1}$ otherwise; the letters occurring in $\tau$ are a subset of the letters occurring in $h_{j+1}\cdots h_{j+D}$ when $j \leq n-D$ and in $h_{j+1}\cdots h_n$ otherwise; and the letters occurring in $W$ are exactly the letters occurring in $h_i\cdots h_j$ which survive in the word $h$ after it is reduced in $A(\Gamma)$.
\end{lem}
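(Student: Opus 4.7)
The plan is to take a disk diagram $\Delta$ for $h = h_1\cdots h_n$ (reducing it to the $\aga$-word $w$ representing $h$), apply Lemma \ref{l:comb} to the subword $\beta := h_i\cdots h_j$ on its boundary, and then exploit planarity to sort the letters of $\beta$ by where their dual arcs land.

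First, set up the diagram with boundary decomposed cyclically as $(h_1\cdots h_{i-1})\cdot \beta \cdot (h_{j+1}\cdots h_n)\cdot \bar{w}$, and write $a := h_1\cdots h_{i-1}$, $c := h_{j+1}\cdots h_n$. Apply Lemma \ref{l:comb} to $\beta$ to produce a rearrangement $\beta'$ that is combed, and then delete matched pairs within $\beta'$ to obtain $\beta''$ in a disk diagram $\Delta''$. By that lemma, $\beta''=\beta$ in $\aga$, the arcs from $\beta''$ in $\Delta''$ are pairwise non-crossing, and each such arc retains the same external endpoint on $a \cup c \cup \bar{w}$ as in $\Delta$.

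Next, the planarity argument. For two arcs from positions $p_1 < p_2$ on $\beta''$ with endpoints $q_1, q_2$ on the complement $a \cup \bar{w} \cup c$, non-crossing is equivalent to requiring that $q_2$ precede $q_1$ in the cyclic order starting at $p_2$ and traversing $c$, then $\bar{w}$, then $a$. A short case check --- ruling out, e.g., $p_1$ landing on $a$ while a later $p_2$ lands on $c$ or $\bar{w}$, or a $\bar{w}$-arc sandwiched between two $a$-arcs --- shows that the landing zones are swept in the order $a,\bar{w},c$ as one reads $\beta''$ from left to right. Consequently $\beta''=\sigma W \tau$, where the arcs from the letters in $\sigma$ land on $a$, those from $W$ land on $\bar{w}$, and those from $\tau$ land on $c$.

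Finally, identify the three pieces. The letters of $W$ are precisely the contributing letters of $\beta$: they are those connected to $\bar{w}$, hence surviving in the reduction of $h$ to $w$. For a letter of $\sigma$ originating in some $h_k$ with $i \le k \le j$, the preserved arc endpoint on $a$ lies in some $h_\ell$ with $\ell < i$; by Lemma \ref{l:cancel diam} applied in $\Delta$ (where the subword structure $h_1, \ldots, h_n$ is intact), $k - \ell < D$, forcing $\ell \ge i - D$ when $i > D$. Thus the vertex generator labeling that letter appears in $h_{i-D}\cdots h_{i-1}$ (or in $h_1\cdots h_{i-1}$ when $i \le D$). The symmetric argument via $c$ and $h_{j+1}\cdots h_{j+D}$ handles $\tau$. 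The main subtlety is the planarity ordering; once it is in place, the rest is a direct application of the cancellation-diameter bound.
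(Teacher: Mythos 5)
Your proof follows the same route as the paper's: comb $h_i\cdots h_j$ via Lemma~\ref{l:comb}, read off the $\sigma W\tau$ decomposition from planarity of the combed arcs, and invoke Lemma~\ref{l:cancel diam} for the range bounds on $\sigma$ and $\tau$ --- the only addition being that you spell out the planarity step, which the paper dismisses as ``clear.'' One small slip in the illustrative case check: ``$p_1$ landing on $a$ while a later $p_2$ lands on $c$ or $\bar{w}$'' is in fact \emph{consistent} with the boundary cyclic order $a\,\beta''\,c\,\bar{w}$ (what is forbidden is the reverse, e.g.\ an earlier $p_1$ landing on $c$ while a later $p_2$ lands on $a$ or $\bar{w}$), but your stated conclusion that the landing zones are swept in the order $a, \bar{w}, c$ is correct, so the argument is sound.
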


\begin{proof}
Consider a disk diagram for $(h_1\cdots h_n)h^{-1}$, and let $b$ be the subword $h_i\cdots h_j$.  Lemma \ref{l:comb} provides a word $b'' = b$ and a disk diagram for $(h_1\cdots h_{i-1}b''h_{j+1}\cdots h_n)h^{-1}$ in which all arcs emanating from $b''$ terminate in $h_{j+1}\cdots h_nh^{-1}h_1\cdots h_{i-1}$ and no two arcs from $b''$ intersect.  Then it is clear that $b''$ partitions into subwords $\sigma, W$, and $\tau$, whose arcs connect to $h_1\cdots h_{i-1}$, $h$, and $h_{j+1}\cdots h_n$ respectively.  Recall that the disk diagram furnished by Lemma \ref{l:comb} differs from the original only by removing or shuffling endpoints of arcs in $b$.  Lemma \ref{l:cancel diam} implies that the arcs that do not contribute to $h$ end within the subwords claimed.
\end{proof}

\begin{lem}[Bounded non-contribution]\label{l:contribution} Given $H$ a finitely generated, star-free subgroup of $A(\Gamma)$, there exists $K = K(H)$ such that, if $h_i\cdots h_j$ is a subword of a reduced word for $h$ in $H$ which contributes nothing to the reduced word for $h$ in $A(\Gamma)$, then $j-i \le K$. 
\end{lem}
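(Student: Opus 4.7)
The plan is to combine Lemma~\ref{l:bddSWT} with properness of the inclusion $H \hookrightarrow \aga$. Fix the setup from the start of the section: express $h = h_1\cdots h_n \in H$ as a geodesic $H$-word, and consider a disk diagram $\Delta$ for $h$ whose boundary is divided into a subarc labeled $h_1\cdots h_n$ and a subarc labeled by the inverse of the reduced $\aga$-representative of $h$. Suppose $h_i\cdots h_j$ contributes nothing to this reduced representative, meaning no arc of $\mathcal{A}$ emanating from the portion of $\partial \Delta$ spelling $h_i\cdots h_j$ terminates in the other subarc.

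Applying Lemma~\ref{l:bddSWT} gives $h_i\cdots h_j = \sigma W \tau$ in $\aga$. Since the letters of $W$ are precisely those of $h_i\cdots h_j$ that survive in the reduced form, the hypothesis forces $W$ to be the empty word, so $h_i\cdots h_j = \sigma\tau$ in $\aga$. I claim that $|\sigma|_{\aga}$ and $|\tau|_{\aga}$ are each at most $DM$, where $M = \max_{s \in S}|s|_{\aga}$. Inspecting the proofs of Lemmas~\ref{l:comb} and~\ref{l:bddSWT}, $\sigma$ sits inside a combed rearrangement of $h_i\cdots h_j$, and each of its letters is the endpoint of a unique arc whose other endpoint---by Lemma~\ref{l:cancel diam}---lies in $h_{i-D}\cdots h_{i-1}$ (or in $h_1\cdots h_{i-1}$ when $i \le D$). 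Hence the word length of $\sigma$ is at most the total $\aga$-length of $h_{i-D}\cdots h_{i-1}$, which is at most $DM$; the symmetric argument gives the bound for $\tau$, so $|h_i\cdots h_j|_{\aga} \le 2DM$.

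To conclude, observe that any subword of a geodesic $H$-word is itself $H$-geodesic---otherwise $h_1\cdots h_n$ could be shortened---so $|h_i\cdots h_j|_H = j-i+1$. Finite generation of both $H$ and $\aga$ makes the inclusion $H \hookrightarrow \aga$ proper, hence there is a constant $K = K(H)$ such that every element of $H$ with $\aga$-length at most $2DM$ has $H$-length at most $K+1$. We therefore obtain $j-i+1 \le K+1$, i.e. $j-i \le K$, as required. The one delicate step---and the main technical point---is extracting an honest word-length bound on $\sigma$ and $\tau$ from Lemma~\ref{l:bddSWT}, whose statement records only a support condition; this is transparent from the combing construction in Lemma~\ref{l:comb}, which merely shuffles and deletes letters of the original subword, so each letter of $\sigma$ (respectively $\tau$) consumes a distinct letter occurrence in $h_{i-D}\cdots h_{i-1}$ (respectively $h_{j+1}\cdots h_{j+D}$).
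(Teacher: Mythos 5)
Your proof is correct and follows essentially the same route as the paper's: invoke Lemma~\ref{l:bddSWT} to write the vanishing subword as $\sigma W\tau$ with $W$ empty, bound $\ell_{\aga}(\sigma\tau)$ by $2D\cdot\max_{s\in S}\ell_{\aga}(s)$, and then apply properness of the inclusion $H\hookrightarrow\aga$. The one thing you do that the paper leaves implicit is justify why the support condition in the statement of Lemma~\ref{l:bddSWT} upgrades to an honest word-length bound on $\sigma$ and $\tau$ (via the combing/arc-matching picture), which is a worthwhile clarification but not a departure in strategy.
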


\begin{proof}
Consider a vanishing subword, that is, a subword that contributes nothing to the reduced word for $h$ in $A(\Gamma)$.  By Lemma \ref{l:bddSWT}, this subword $h_i \cdots h_j$ can be rewritten in $A(\Gamma)$ as $LR$, where the letters occurring in $L$ are a subset of the letters occurring in $h_{i-D}\cdots h_{i-1}$ and the letters occurring in $R$ are a subset of the letters occurring in $h_{j+1}\cdots h_{j+D}$. Therefore $\ell_{A(\Gamma)}(LR) \leq 2D \cdot \max\{\ell_{A(\Gamma)}(h_i)\}$, so that there are only finitely many elements in $A(\Gamma)$ that might equal $LR$. Since the inclusion $H \to A(\Gamma)$ is proper, there is a $K\ge0$ so that no word in $H$ of length greater than $K$ reduces to a work in $A(\Gamma)$ of length less then $2D \cdot \max\{\ell_{A(\Gamma)}(h_i)\}$. Hence, $j - i = |h_i\cdots h_j|_H \le K$.
\end{proof}

\begin{prop}[Undistorted]\label{p:undistorted} Finitely generated star-free subgroups are quasi-isometrically embedded.
\end{prop}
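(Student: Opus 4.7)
The plan is to combine bounded non-contribution (Lemma \ref{l:contribution}) with a simple partitioning argument. Fix a finite symmetric generating set $S$ for $H$ with reduced word representatives in $A(\Gamma)$, and set $M = \max_{s \in S} \ell_{A(\Gamma)}(s)$. Let $K = K(H)$ be the constant from Lemma \ref{l:contribution}. Given $h \in H$, write $h = h_1 \cdots h_n$ as a geodesic in $H$, so that $n = \ell_H(h)$.

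The easy direction is immediate: by the triangle inequality, $\ell_{A(\Gamma)}(h) \le M \cdot \ell_H(h)$.

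For the lower bound, I would consider a disk diagram for $h \in H$ as in Section \ref{s:ddapps}, and partition the indices $\{1, 2, \ldots, n\}$ into consecutive blocks $B_1, B_2, \ldots, B_r$ of length exactly $K+1$, together with a possible remainder block of length at most $K$; so $r \ge \lfloor n/(K+1) \rfloor$. By Lemma \ref{l:contribution}, any block of $K+1$ consecutive generators $h_i \cdots h_{i+K}$ cannot be entirely non-contributing, so each $B_j$ supplies at least one letter to the reduced word for $h$ in $A(\Gamma)$. Since distinct generator-letters correspond to distinct positions in the boundary of the disk diagram, these contributions are disjoint, hence
\[
\ell_{A(\Gamma)}(h) \;\ge\; r \;\ge\; \frac{n}{K+1} - 1 \;=\; \frac{1}{K+1}\,\ell_H(h) - 1.
\]

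Combined with the upper bound, this shows that the inclusion $H \hookrightarrow A(\Gamma)$ is a $(\max\{M, K+1\}, 1)$-quasi-isometric embedding, so $H$ is undistorted. The only real work is in the bounded non-contribution lemma, which is already established; once that is available, the proof of Proposition \ref{p:undistorted} is a direct counting argument, and I do not anticipate any additional obstacle.
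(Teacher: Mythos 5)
Your argument takes exactly the same route as the paper's: the upper bound is the trivial triangle inequality, and the lower bound comes from Lemma~\ref{l:contribution} together with a pigeonhole/counting argument on blocks of consecutive $H$-generators. (The paper's proof is in fact only a one-line statement of the resulting two-sided inequality, citing the constant $K$ from that lemma.)

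One small slip: Lemma~\ref{l:contribution} says that a non-contributing subword $h_i\cdots h_j$ satisfies $j-i\le K$, i.e.\ such a subword may have up to $K+1$ generators. Thus a block $h_i\cdots h_{i+K}$ of $K+1$ consecutive generators has $j-i=K$, which the lemma does \emph{not} rule out as entirely non-contributing; your claim that it ``cannot be entirely non-contributing'' is off by one. Taking blocks of length $K+2$ fixes this, giving $\ell_{A(\Gamma)}(h)\ge \ell_H(h)/(K+2)-1$, which is still a quasi-isometric-embedding lower bound. (The paper's stated constant $(\ell_H(h)-K)/(K+1)$ appears to reflect the same bookkeeping looseness; the precise constants don't affect the conclusion.)
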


\begin{proof}
For $h \in H < A(\Gamma)$ fulfilling the hypotheses, we have
\[(\ell_H(h) - K)/(K+1) \leq \ell_{A(\Gamma)}(h) \leq \ell_H(h) \cdot \max\{\ell_{A(\Gamma)}(h_i)\}\]
where $K$ is from Lemma \ref{l:contribution}.
\end{proof}

\section{Join-busting in purely loxodromic subgroups}\label{s:jbusting}

Recall that $H$ is \emph{$N$--join-busting} if, for any reduced word $w$ representing $h\in H$, and any join subword $\beta\leq w$, we have $\ell_{\aga}(\beta)\leq N$.  In this section we prove that purely loxodromic subgroups of $A(\Gamma)$ are $N$--join-busting.  Before proving this result (Theorem \ref{t:jbusting} below), let us introduce some terminology. Let $\Lambda$ be a subjoin of $\Gamma$. A reduced word $w$ in generators of $A(\Lambda) < A(\Gamma)$ is said to be $M$--\emph{homogeneous with respect to $\Lambda$} if every subword $w'$ of $w$ with length at least $M$ contains each vertex generator of $\Lambda$, or its inverse. 

\begin{lem}\label{l:homogeneous}
Let $H$ be a finitely generated subgroup of $A(\Gamma)$. Suppose $h_k \in H$ is a sequence of words with reduced decomposition $a_kw_kb_k$, where $w_k$ are join subwords and $\ell_{A(\Gamma)}(w_k) \to \infty$.  Then for some $M \ge 1$ and some subjoin $\Lambda \subset \Gamma$ there exists such a sequence in $H$ so that the $w_k$ are $M$--homogeneous with respect to $\Lambda$.
\end{lem}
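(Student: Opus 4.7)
The strategy is a descending induction on the size of the common support of the middle subwords, exploiting the fact that one may re-partition a fixed reduced word for $h_k$ into new pieces without altering the element or leaving $H$. First, since $\Gamma$ has only finitely many induced subgraphs, pigeonhole lets us pass to a subsequence in which $\supp(w_k) = \Lambda$ for a single subgraph $\Lambda \subseteq \Gamma$; this $\Lambda$ is automatically a subjoin, since it lies in whatever join originally contained $\supp(w_k)$.

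Next comes the dichotomy driving the induction. Either there exists some $M \ge 1$ for which $w_k$ is $M$-homogeneous with respect to $\Lambda$ along an infinite subsequence (in which case we are done), or, for every $M$, only finitely many $w_k$ are $M$-homogeneous. In the latter case, for each $M$ we may pick $k \ge M$ with $w_k$ not $M$-homogeneous, so that $w_k$ contains a subword of length at least $M$ whose support omits some $v_k \in \Lambda$. Because $\Lambda$ is finite, a further pigeonhole yields a single $v \in \Lambda$ and an infinite subsequence (which we relabel $\{k\}$) along which $w_k$ contains a subword $w'_k$ with $\supp(w'_k) \subseteq \Lambda \setminus \{v\}$ and $\ell_{A(\Gamma)}(w'_k) \to \infty$. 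Writing $w_k \equiv p_k w'_k q_k$, the reduced word $a_k w_k b_k$ is the very same string of letters as $(a_k p_k)\, w'_k\, (q_k b_k)$, so the latter is a new reduced decomposition of the same element $h_k \in H$; its middle $w'_k$ is a join subword since its support lies in the subjoin $\Lambda \setminus \{v\}$, and $\ell_{A(\Gamma)}(w'_k) \to \infty$ is preserved.

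Now we iterate, returning to the first step with the new decomposition and the strictly smaller containing subjoin $\Lambda \setminus \{v\}$. Each iteration drops $|\Lambda|$ by at least one. Since $\ell_{A(\Gamma)}(w_k) \to \infty$ rules out $|\Lambda| = 0$, and the case $|\Lambda| = 1$ is tautologically $1$-homogeneous (every nonempty subword contains the unique vertex of $\Lambda$), the process must halt within at most $|V(\Gamma)|$ iterations at some $\Lambda$ and $M$ for which the $w_k$ are $M$-homogeneous with respect to $\Lambda$. The main conceptual point, which I expect to be the only subtle step, is that naively replacing $h_k$ by a ``subword element'' of $H$ would generically fail to land back in $H$; the fix is that we never change $h_k$ itself, only re-partition its reduced expression so that the middle subword has strictly smaller support.
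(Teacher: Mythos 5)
Your proposal is correct and follows essentially the same route as the paper: pigeonhole to a common supporting subjoin, dichotomize on eventual $M$-homogeneity, and if it fails extract unbounded subwords on a strictly smaller subjoin (by re-partitioning the same reduced word for $h_k$, not replacing $h_k$), iterating until finiteness of $\Gamma$ forces termination. You are merely a bit more explicit than the paper on two points it leaves implicit --- that the re-partitioned decomposition represents the same element of $H$, and that the $|\Lambda|=1$ base case is trivially $1$-homogeneous.
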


\begin{proof}
Since there are only finitely many joins in the graph $\Gamma$, after passing to a subsequence, we may assume that the $w_k$ are supported in a single join $J$.  If there is no $M$ so that the sequence $w_k$ is eventually $M$--homogeneous, then we can find arbitrarily large subwords of $w_k$ supported on a proper subjoin of $J$. After again passing to a subsequence, we may assume that these subjoins are constant, i.e equal to $\Lambda_1 \subset J$. In this manner, we obtain a new sequence of reduced words $h^2_k = a^2_kw^2_kb^2_k$ such that $|w^2_k| \to \infty$ and the letters of $w^2_k$ are contained in $\Lambda_1 \subset J$. Now either $w^2_k$ are eventually $M$--homogeneous with respect to $\Lambda_1$, for some $M \ge 1$, or we may iterate the process and find subjoins \[J \supset \Lambda_1 \supset \Lambda_2 \supset \Lambda_3 \supset \cdots\] in the same way we found $\Lambda_1$. Since there are only finitely many subgraphs of $J$, each of finite size, this process terminates in a sequence of words in $H$ which contribute $M$--homogeneous subwords of unbounded size and which are supported on some subjoin $\Lambda_n \subset J$ of $\Gamma$.
\end{proof}

\begin{thm}\label{t:jbusting}
Let $H<\aga$ be finitely generated and purely loxodromic.  There exists an $N=N(H)$ such that $H$ is $N$--join-busting.
\end{thm}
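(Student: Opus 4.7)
The strategy is a proof by contradiction: if no join-busting constant $N$ exists, then $H$ must contain a nontrivial elliptic element, contradicting pure loxodromicity via the characterization of loxodromics (Theorem \ref{thm:loxodromics}). Suppose there exist $h_k \in H$ with reduced form $a_k w_k b_k$ in $A(\Gamma)$, where $w_k$ is a join subword and $\ell_{A(\Gamma)}(w_k)\to\infty$. By Lemma \ref{l:homogeneous}, after passing to a subsequence, we may assume the $w_k$ are $M$-homogeneous with respect to some subjoin $\Lambda$ of a join $J$ of $\Gamma$. Note that $H$ being purely loxodromic forces $H$ to be star-free, since star subgroups are contained in join subgroups of $A(\Gamma)$; hence the disk-diagram tools of Sections \ref{s:dds} and \ref{s:ddapps}---including the cancellation diameter $D=D(H)$ from Lemma \ref{l:cancel diam} and the subword decomposition of Lemma \ref{l:bddSWT}---are available.

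Fix a finite generating set $S$ for $H$ and let $L=\max_{s\in S}\ell_{A(\Gamma)}(s)$. Write each $h_k$ as an $H$-geodesic $h_k = h^k_1 \cdots h^k_{n_k}$ and consider a disk diagram with boundary $(h^k_1\cdots h^k_{n_k})(a_k w_k b_k)^{-1}$. The set $I_k$ of indices $i$ for which $h^k_i$ has a letter arcing into the $w_k$-subarc satisfies $|I_k|\ge \ell_{A(\Gamma)}(w_k)/L\to\infty$. The central technical step is to locate a subinterval $[p_k,q_k]$ with $q_k-p_k\to\infty$ such that every surviving letter of $\beta_k := h^k_{p_k}\cdots h^k_{q_k}$ lands in the $w_k$-subarc; then Lemma \ref{l:bddSWT} gives $\beta_k = \sigma_k W_k \tau_k$ in $A(\Gamma)$ with $|\sigma_k|,|\tau_k|\le DL$, and $W_k$ a word in the letters of $w_k$, so $W_k \in A(J)$ with $\ell_{A(\Gamma)}(W_k)\to\infty$. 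I expect this isolation step to be the main obstacle: a priori, generators in the middle of the $H$-word can contribute letters that slide through the diagram to land in $a_k$ or $b_k$, and ruling this out---or absorbing the slippage into the bounded boundary terms---is where $M$-homogeneity of $w_k$ is essential. The plan is to combine iterated combing (Lemma \ref{l:comb}) with the fact that every length-$M$ window of $w_k$ contains every vertex of $\Lambda$, which tightly restricts how far a non-$J$ letter can commute past a long $w_k$-block in the diagram.

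Once $\beta_k = \sigma_k W_k \tau_k$ is obtained, there are only finitely many possibilities for the bounded-length $\sigma_k, \tau_k$ over the fixed generating set of $A(\Gamma)$. Passing to a further subsequence, assume $\sigma_k \equiv \sigma$ and $\tau_k \equiv \tau$ are constant. Then for any $k \ne k'$,
\[
\beta_k\beta_{k'}^{-1} \;=\; \sigma\, W_k W_{k'}^{-1}\,\sigma^{-1} \;\in\; H
\]
is conjugate into the join subgroup $A(J)$, hence an elliptic element of $A(\Gamma)$. Since $\ell_{A(\Gamma)}(W_k)\to\infty$, for appropriate $k \ne k'$ we have $W_k \ne W_{k'}$ in $A(\Gamma)$, making $\beta_k\beta_{k'}^{-1}$ a nontrivial elliptic element of $H$. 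This contradicts pure loxodromicity and completes the proof.
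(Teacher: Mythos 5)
Your overall skeleton matches the paper's proof closely: the contradiction setup via Lemma \ref{l:homogeneous}, the decomposition via Lemma \ref{l:bddSWT} with $\sigma_k, \tau_k$ bounded in terms of the cancellation constant $D$, pigeonholing $\sigma_k,\tau_k$ to be constant, and producing an elliptic element $\beta_k\beta_{k'}^{-1}$ contradicting Theorem \ref{thm:loxodromics}. You also correctly identify the crux: letters contributed by $H$-generators deep in the middle of the $H$-word can arc past $w_k$ and terminate in $a_k$ or $b_k$.

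The gap is in your proposed resolution of that crux. You aim to \emph{isolate} a subinterval $[p_k,q_k]$ so that every surviving letter of $\beta_k$ lands on the $w_k$-subarc, giving $W_k \in A(J)$. This isolation is not achievable in general, and neither of your fallbacks works. A vertex $u$ whose link contains all of $\Lambda$ --- possibly with $u \notin J$ --- commutes with every letter in an $M$-homogeneous block, so its arc can slide through the entire $w_k$-subarc unimpeded; $M$-homogeneity places no restriction on such letters. And there is no a priori bound on how many of them occur, so they cannot be absorbed into the $D$-bounded pieces $\sigma_k,\tau_k$. Your plan treats $M$-homogeneity as a barrier to escape, but it is not a barrier for $\Lambda$-commuting letters.

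The paper resolves the difficulty by not isolating at all. It combs the arcs from $w_k$, picks $h'_k$ and then $h''_k$ via the generators hitting the $M$-th and $(\ell(w_k)-M+1)$-th letters of $w_k$, and defines $W_k$ to consist of \emph{all} letters of $h''_k$ whose arcs reach the reduced word $a_k w_k b_k$, including those landing in $a_k$ or $b_k$. Then $M$-homogeneity is used after the fact: any arc from $h''_k$ to $a_k$ or $b_k$ must cross the arcs from the first or last $M$ letters of $w_k$, and since those $M$ letters contain every generator of $\Lambda$, the escaping letter commutes with all of $\Lambda$. Hence $\supp(W_t W_s^{-1})$ lies inside $\Lambda * \{\text{letters of } W_t, W_s \text{ not in }\Lambda\}$, which is a join of $\Gamma$ --- not necessarily $J$, but that is immaterial, since being supported on \emph{some} join is what makes $h'''_t(h'''_s)^{-1}$ elliptic. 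In short, $M$-homogeneity is used not to prevent escape but to certify the escapees as harmless.
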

\begin{proof}
Fix a finite free generating set $S$ for $H$ and reduced word representatives for each generator in $S$.  Suppose the contrary of the theorem, so that we have a sequence $\{h_k\}\subset H$ of elements with reduced decompositions $h_k = a_kw_kb_k$, where the $w_k$ are join subwords of unbounded length.  Fix disk diagrams for $(h_k)^{-1}(a_kw_kb_k)$ so that arcs emanating from $w_k$ do not intersect each other, using Lemma \ref{l:comb}. Recall that we refer to this condition when we say that the arcs for $w_k$ are \emph{combed} in the disk diagram.  By specifying this, we also fix a particular reduced word representing $w_k$. Note that here our disk diagram reduces $h_k$, written as a concatenation of our fixed representatives from $S$, to the reduced word $a_kw_kb_k$.  

By Lemma \ref{l:homogeneous}, we may suppose that the $w_k$ are $M$--homogeneous for some uniform $M$, and that these contributions are supported in a fixed subjoin $\Lambda$.  Recall that this assumption implies that every subword of $w_k$ of length at least $M$ uses each letter coming from the subgraph $\Lambda$.  Because, in the proof of the lemma, our final $w_k$ are subwords of our original $w_k$, they retain the property that arcs emanating from them do not intersect in our fixed disk diagrams (they are combed), and so we can think of the $w_k$ as having fixed reduced form representatives.  We may also assume that $\ell_{\aga}(w_k)\geq kM$, by passing to a subsequence.

Each $h_k$ is represented in its disk diagram by a unique geodesic word $h_{1,k}\cdots h_{s(k),k}$, where the $h_{n,k}$ are from our fixed generating set $S$.  For $k \geq 2$, let $h_{I(k),k}$ be the $H$--generator connected by an arc to the $M^{th}$ letter of $w_k$, and $h_{J(k),k}$ to the $(\ell_{\aga}(w_k)-M+1)^{th}$ letter of $w_k$.  Because the arcs from $w_k$ are combed, $I(k)\leq J(k)$.  For $k$ sufficiently large, depending on how many times the vertices of $J$ correspond to letters in a free generator for $H$, $J(k) - I(k) > 1$.  In that case we may set $i(k) = I(k)+1$ and $j(k) = J(k)-1$ and consider the nonempty subword $h_{i(k),k}\cdots h_{j(k),k}$.

The subword $h'_k = h_{i(k),k}\cdots h_{j(k),k}$ is the largest whose contribution to $w_k$ is \emph{$M$--deep}, meaning that the subword contributes to $w_k$ but none of its arcs connect to either the first or last $M$ letters of $w_k$.  Because the arcs of $w_k$ are combed, all letters of $w_k$ between the first and last sets of $M$ do connect to $h'_k$, and therefore $j(k)-i(k)$ is unbounded.  In particular, for sufficiently large $k$, we may consider the subwords
\[h''_k = h_{i(k)+D,k}\cdots h_{j(k)-D,k}\]
for $D$ the constant from Lemma \ref{l:cancel diam}.

We may apply Lemma \ref{l:bddSWT} and see that $h''_k$ may be written as $\sigma_kW_k\tau_k$, where the letters for $\sigma_k$ correspond to arcs in $h''_k$ that connect to $h_{i(k)}\cdots h_{i(k)+D-1}$, and the letters for $\tau_k$ correspond to arcs in $h''_k$ that connect to $h_{j(k)-D+1}\cdots h_{j(k)}$.  The letters in $W_k$ correspond precisely to the arcs in $h''_k$ that connect to $a_kw_kb_k$.  Any arc in $h''_k$ that connects to either $a_k$ or $b_k$ must cross the arcs from either the first or the last $M$ letters of $w_k$; by our assumption of $M$--homogeneity, this means that those arcs correspond to letters that commute with each vertex generator of  $\Lambda$.  In particular, every letter in $W_k$ either belongs to $\Lambda$ (if it is contributed to $w_k$) or is joined with all of $\Lambda$ (if it is contributed to either $a_k$ or $b_k$).

Now observe that there are only finitely many possibilities for $\sigma_k$ and $\tau_k$, so we may pass to a subsequence where these are constant.  That is, we have a sequence
\[h'''_t = \sigma W_t\tau\]
of words in $H$ where $\ell_H(h'''_t)$ is unbounded and each letter in $W_t$ either belongs to $\Lambda$ or is joined with $\Lambda$.  In particular we have $h'''_s \neq h'''_t$ in this sequence, and \[h'''_t(h'''_s)^{-1} = (\sigma W_t \tau)(\tau^{-1} W_s^{-1} \sigma^{-1})\]
which is conjugate to $W_tW_s^{-1}$.  But $W_tW_s^{-1}$ is a join word, since it is supported on $\Lambda$ joined with $\{\text{letters in }W_t,W_s\text{ not in }\Lambda\}$.  This contradicts the assumption that $H$ is purely loxodromic, by Theorem \ref{thm:loxodromics}.
\end{proof}

\section{Stability of join-busting subgroups}\label{s:stability}
Let $G$ be a finitely generated group and let $H \le G$ be a finitely generated subgroup. We say that $H$ is \emph{stable} in $G$ if $H$ is undistorted in $G$ (i.e. $H \to G$ is a quasi-isometric embedding), and  for any $K\ge1$ there is an $M=M(K)\ge0$ such that any pair of $K$--quasi-geodesics in $G$ with common endpoints in $H$ have Hausdorff distance no greater than $M$.  This definition was introduced by the third author and Durham in \cite{DurhamTaylor}.  It is clear that stable subgroups of $G$ are quasiconvex with respect any word metric on $G$.  In this section we prove that join-busting subgroups of $A(\Gamma)$ are stable.  This will come as a corollary to Theorem \ref{t:stability} below.

Say that an element $g\in A(\Gamma)$ is $N$--join-busting if for any reduced word $w$ representing $g$ and any join subword $w'$ of $w$, $\ell_{A(\Gamma)}(w') \le N$.  Of course, a subgroup in $A(\Gamma)$ is join-busting if and only if its nontrivial elements are $N$--join-busting for some uniform $N$. Recall that the $1$-skeleton of $\Salv$ can be identified with the Cayley graph of $\aga$ with respect to the vertex generators once we fixed a lift of the unique vertex of $S(\Gamma)$. Fix such a lift and denote it by $1$; it is the image of $1\in \aga$ under the orbit map $\aga \to \Salv$ given by $g \mapsto g1$.

\begin{thm} \label{t:stability}
Given $K\ge1$ and $N\ge0$, there exists $M = M(K,N)$ satisfying the following.  Suppose $g \in A(\Gamma)$ is $N$--join-busting and let $w$ be a combinatorial geodesic from $1$ to $g1$. If $p$ is a $K$--quasi-geodesic edge path in $\Salv$ also joining $1$ to $g1$, then the Hausdorff distance between $w$ and $p$ is no more than $M$.
\end{thm}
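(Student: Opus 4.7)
The plan is to use disk diagrams (Section \ref{s:dds}) to translate the Hausdorff distance bound into combinatorial arc counts, and then bound these counts using both the quasi-geodesic hypothesis on $p$ and the join-busting hypothesis on $g$.

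\textbf{Setup.} Since $p$ and $w$ both represent $g$ as edge paths in $\Salv^{(1)}$, the loop $p\cdot w^{-1}$ bounds a tight disk diagram $\Delta$. Because $w$ is a geodesic and hence reduced, every arc of $\Delta$ has at least one endpoint on the $p$-subarc of $\partial\Delta$. Call an arc \emph{contributing} if its other endpoint lies on $w$, and a \emph{cancellation arc} if both endpoints lie on $p$. For a vertex $q$ on $p$ splitting $p=p_1p_2$ and a vertex $v$ on $w$ splitting $w=w_1w_2$, a properly embedded arc $\gamma\colon q\to v$ in $\Delta$ spells (by label reading) a word representing $q^{-1}v$ in $A(\Gamma)$, so $d(q,v)$ is bounded by the number of arcs of $\Delta$ crossed by $\gamma$. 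The unavoidable crossings split as (i) cancellation arcs with endpoints in both $p_1$ and $p_2$, and (ii) ``reordering'' contributing arcs---those from $p_1$ to $w_2$ or from $p_2$ to $w_1$. We will choose $v$ to minimize the (ii)-count, bounding $d(q,w)$; a symmetric surgery bounds $d(v,p)$ for $v$ on $w$, yielding the Hausdorff bound.

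\textbf{Join-busting controls (ii).} The key observation is that a $p_1\to w_2$ arc and a $p_2\to w_1$ arc correspond to chords of $\partial\Delta$ that interleave. Since $\Delta$ is tight, these two arcs must intersect, so their generator labels commute in $\Gamma$. Hence every label of a $p_1\to w_2$ arc commutes with every label of a $p_2\to w_1$ arc, so the union of these labels is contained in some join $J$ of $\Gamma$. If the minimum (ii)-count over $v$ is large, then many $w$-positions carry labels lying in $J$. The remaining combinatorial challenge is to promote this to the statement that the \emph{contiguous} subword of $w$ spanning these positions is itself supported in $J$; once this is achieved, $N$-join-busting caps its length---and hence the (ii)-count---by a function of $N$ alone. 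The intended tool is iterated star-surgery (Lemma \ref{l:starsurgery}) combined with combing (Lemma \ref{l:comb}), which together allow one to reshape $\Delta$ to manifest the required contiguous join subword.

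\textbf{Quasi-geodesic controls (i), and the reverse bound.} Each crossed cancellation arc corresponds to a ``side excursion'' of $p$ near $q$---a hyperplane crossed forward by $p_1$ and back by $p_2$. In a $K$-quasi-geodesic, the depth of any such excursion is bounded via $d(p(s),p(t))\ge |s-t|/K - K$, and the total (i)-count at $q$ can be bounded by a function of $K$ together with the optimal (ii)-count. Combining, $d(q,w)\le M_1(K,N)$. For the reverse inequality, a symmetric surgery in $\Delta$---now routing an arc from any $v$ on $w$ to an optimally chosen $q$ on $p$---produces the identical classification of crossed arcs (the reducedness of $w$ enters only to forbid arcs with both endpoints on $w$, which is used symmetrically). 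This gives $d(v,p)\le M_2(K,N)$, and taking $M=\max(M_1,M_2)$ completes the proof.

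\textbf{Main obstacle.} The heart of the argument is the combinatorial passage from ``many $w$-positions with labels in a common join $J$ of $\Gamma$'' to ``a contiguous subword of $w$ of comparable length supported in $J$.'' Letters outside $J$ may appear between the join-labeled positions, and reducedness of $w$ must be preserved under any rearrangement. Overcoming this requires careful iterated application of Lemma \ref{l:starsurgery} to migrate commuting letters together without introducing cancellations in $w$, and is the most delicate step of the proof.
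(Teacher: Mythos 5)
You have correctly identified the main obstacle, and it is genuine rather than merely ``delicate.'' The $w$-positions carrying labels of your type-(ii) arcs are not contiguous, and the intervening letters of $w$ (coming from arcs whose $p$-endpoints lie on the \emph{same} side of $q$) need not lie in $J$ nor commute with $J$: such an intervening arc interleaves with only one of the two families of type-(ii) arcs (say the $p_1 \to w_2$ family), hence commutes with that family, but it need not interleave with---nor commute with---the other family, so it has no reason to belong to the join $J$ they span. Lemma \ref{l:starsurgery} applies to an arc with both endpoints on the same boundary segment and does not produce the migration you want; Lemma \ref{l:comb} merely combs arcs and will not make a non-commuting letter disappear from the middle of the would-be join subword. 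There is also no real argument for type (i): asserting that ``the depth of any excursion is bounded'' does not, by itself, bound the \emph{number} of cancellation arcs separating $p_1$ from $p_2$ at a given point, and your proposal never produces the quantitative inequality needed there.

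The paper's proof of Theorem \ref{t:stability} takes a more global route that sidesteps both problems. Rather than fixing $q\in p$ and classifying arcs crossed by a connecting path, it shows that if two $w$-edges $b,d$ are separated by at least $B-2$ edges (with $B=3N+1$) and a single arc $\epsilon$ crosses both of their dual arcs $\beta,\delta$, then surgery along $\beta,\epsilon,\delta$ exhibits the subword $w'$ between $b$ and $d$ as an element of $\st(\beta)\cdot\st(\epsilon)\cdot\st(\delta)$, a product of three star (hence join) words; since $|w'|\ge 3N+1$, one factor has length $>N$, and reinserting a reduced spelling of that product into $w$ contradicts $N$-join-busting. This yields a clean pairwise ``uncrossed'' structure for the arcs $\alpha_k$ emanating from a subdivision $w = e_1 w_1 \cdots e_n w_n$ with $\ell(w_k)=B-1$, which feeds the exact inequality $\ell(p_k)\ge l_k + l_{k+1} - \ell(w_k)$ into the $K$-quasi-geodesic bound on $p$ to force the $l_k$ to be uniformly bounded. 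To salvage your proposal you would need a different combinatorial lemma to replace the contiguity claim (which is false as stated) and an actual inequality to replace the heuristic bound on (i); as written, the argument does not close.
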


\begin{proof}

As before, we identify the edge paths $p$ and $w$ with the corresponding sequence of oriented edges, and hence the corresponding word representing $g \in A(\Gamma)$. As $w$ is a geodesic path, it corresponds to a reduced word representing $g$, which we continue to denote by $w$. We wish to bound the distance between the paths $w$ and $p$. To do this, we first fix a disk diagram $\Delta$ realizing the reduction of $p$ to the reduced form $w$. For concreteness, suppose that $\Delta$ is obtained from a map $f: D \to \salv$ as in Section \ref{s:dds}. Retaining the same notation, replace $f$ with the lift $f: D \to \Salv$ such that portion of the boundary of $D$ representing $p$ (or $w$) is mapped to the paths in $\Salv$ spelling $p$ (or $w$). We immediately see that for any two points $x,y \in \partial D$, the combinatorial distance between $f(x)$ and $f(y)$ is no greater than the number of arcs of $\mathcal{A}$ crossed by any path in $D$ from $x$ to $y$.

Set $B = 3N+1$. Let $b$ and $d$ be oriented edges in $w$ that are separated by no fewer that $B-2$ edges of $w$. We first claim that if $\beta$ and $\delta$ are the arcs of $\mathcal{A}$ with endpoints in $b$ and $d$, respectively, then, no arc of $\mathcal{A}$ can intersect both $\beta$ and $\delta$. To see this, let $w' = b \ldots d$  be the length $\ge B$ subpath (subword) of $w$ beginning with the edge $b$ and terminating with the edge $d$. If there is an arc $\epsilon \in \mathcal{A}$ meeting both $\beta$ and $\delta$ then the subdiagram obtained via surgery corresponding to a concatenation of a portion of $\beta$, $\epsilon$, and $\delta$ shows that $w' \in \mathrm{st}(\beta) \cdot \mathrm{st}(\epsilon) \cdot \mathrm{st}(\delta)$. Here, we have continued to use the symbols $\beta$, $\epsilon$, and $\delta$ to denote the vertex generators labeling these arcs. Hence, $w'$ can be written as a product of $3$ join words and since $|w'| \ge B = 3N+1$, one of these join words must have length greater than $N$. (Technically, we have that $w'$ has \emph{some} reduced spelling that is a product of $3$ join words and so one of the join words in this latter spelling has length greater than $N$). This contradicts the assumption that $g$ is $N$--join-busting. Hence, no arc of $\mathcal{A}$ crosses both $\beta$ and $\delta$. In particular, we see that $\beta$ and $\delta$ do not intersect.  

This implies that we can decompose the geodesic $w$ into subpaths as follows:
\[
w= e_1\cdot w_1\cdot e_2\cdot w_2 \cdots e_{n}\cdot w_n,
\]
where $\ell(w_i) = B-1$ for $1\leq i\leq n-1$, $\ell(w_n) \le B-1$, $(n-1)B < l(w) \le nB$, and each $e_i$ is a single edge with the property that if we denote by $\alpha_i$ the arc of $\mathcal{A}$ with endpoint in $e_i$, then no arc of $\mathcal{A}$ crosses any two of the $\alpha_i$. In particular, $\alpha_i$ and $\alpha_j$ are disjoint for $i \neq j$.

Since the $\alpha_i \in \mathcal{A}$ have their other endpoint in edges of the path $p$, we have an induced decomposition:
\[
p= p_0\cdot e_1 \cdot p_1 \cdot e_2 \cdot p_2 \cdots e_{n} \cdot p_n.
\]
Set $l_i = \ell(\alpha_i)$ to be the number of arcs of $\mathcal{A}$ that $\alpha_i$ intersects (i.e. the combinatorial length of the path $\alpha_i$), and set $l_0,l_{n+1} = 0$. We have the following basic fact:
\begin{align}\label{eq:force_up}
\ell(p_k) \ge l_k + l_{k+1} - \ell(w_k),
\end{align}
for all $0\le k\le n$. (Here we have set $w_0$ to be the empty word.) To see this, note that any arc of $\mathcal{A}$ which crosses $\alpha_k$ must terminate at either the path $p_k$ or $w_k$, since the four paths in the above inequality form of subdisk of $\Delta$ and no arcs cross both $\alpha_k$ and $\alpha_{k+1}$. As the same is true for arcs crossing $\alpha_{k+1}$, this proves the inequality.

Now set $M = 2((K+1)(B+1) +K)$ and let $e_{i+1},\ldots,e_{j-1}$ be any maximal consecutive sequence for which $l_{i+1},\ldots,l_{j-1} > M$. For future reference, note that any $e_k$ with $l_k >M$ lives in such a sequence. If no such $e_i$ exist then it is easy to see that the paths $p$ and $w$ have Hausdorff distance bounded in terms of $B, M$, and $K$, and we are done.

For this maximal sequence, consider the subpath of $w$
\[
w' = e_i \cdot w_i \cdot e_{i+1} \cdot w_{i+1} \cdots e_{j-1}\cdot w_{j-1} \cdot e_j
\]
and note that $l_i, l_j \le M$ by maximality. Further, $(j-i-1)B \le \ell(w') \le (j-i)(B+1)$. The subpath $w'$ induces the following subpath of $p$
\[
p' = e_i \cdot p_i \cdot e_{i+1} \cdot p_{i+1} \cdots e_{j-1}\cdot p_{j-1} \cdot e_j,
\]
and using the fact that $p$ is a $K$--quasi-geodesic and $l_i,l_j \le M$, we see that 
\begin{align*} 
\ell(p') &\le K(2M + \ell(w')) +K \\
 &\le K\ell(w') + 2MK +K.
\end{align*}
However, combining Inequality (\ref{eq:force_up}) together with the assumption that $M < l_k$ for $i+1 \le k \le j-1$ gives
\begin{align} \label{e:l}
2M(j-i-1) &\le 2 \sum_{k=i+1}^{j-1} l_k \\
&\le \ell(p') + \ell(w') \\
&\le (K+1)\ell(w') + 2MK +K.
\end{align}

Since $j-i-1 \ge 1$, we have
\begin{align*}
M &\le \frac{K+1}{2} \cdot  \frac{\ell(w')}{j-i-1} + K \cdot  \frac{M}{j-i-1} + \frac{K}{2(j-i-1)}\\
 &\le \frac{K+1}{2} \cdot (B+1)  \frac{j-i}{j-i-1} + K \cdot  \frac{M}{j-i-1} + \frac{K}{2(j-i-1)} \\
 &\le (K+1)(B+1) + K \cdot \frac{M}{j-i-1} + K.
\end{align*}
Since $M = 2((K+1)(B+1) +K)$, the above inequality implies that 
\[
M \le \frac{1}{2}M + K\cdot \frac{M}{j-i-1},
\]
but this is only possible if $j-i \le 2K+1$.
This imposes the uniform bound, $\ell(w') \le (2K+1)(B+1)$. Returning to Inequality (\ref{e:l}) with this new information, we see that 
\begin{align*}
\max_{i\le k \le j} l_k &\le \sum_{k=i+1}^{j-1} l_k \\
&\le \frac{1}{2}(K+1)\ell(w') + MK +\frac{1}{2}K. \\
&\le \frac{1}{2}(K+1)(2K+1)(B+1) + MK +\frac{1}{2}K. 
\end{align*}
If we denote this last quantity by $L$, we see that $L$ depends only on $K$ and $N$. Since any $1\le k \le n$ with $l_k > M$ occurs in such a sequence investigated above, we have $l_k \le L$. Hence, for any $1\le k \le n$, 
\[\ell(p_k) \le K(2L+(B-1)) +K.\]
Hence, any vertex on the path $p$ lies at distance at most \[\frac{1}{2}(K(2L+(B-1)) +K) + L\] from the geodesic $w$. Further, any vertex on $w$ lies at distance at most $\frac{1}{2}(B-1) +L$ from the path $p$. This completes the proof of the theorem by setting 
\[S = \frac{1}{2}(K(2L+(B-1)) +K) + L,\]
where $B = 3N+1$ and \[L =  \frac{1}{2}(K+1)(2K+1)(B+1) + MK +\frac{1}{2}K\] for $M =2((K+1)(B+1) +K)$.
\end{proof}

\begin{cor}[Join-busting implies stable]\label{cor:joinbusting_implies_stable}
Suppose $H \le A(\Gamma)$ is a finitely generated, join-busting subgroup. Then $H$ is stable in $A(\Gamma)$. In particular, $H$ is combinatorially quasiconvex in $A(\Gamma)$.
\end{cor}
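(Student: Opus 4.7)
The plan is to deduce the corollary directly from Theorem \ref{t:stability} together with the star-free machinery of Section \ref{s:ddapps}. I would verify the two defining properties of stability in turn---undistortedness and the Morse-type Hausdorff bound on quasi-geodesics---and then observe that combinatorial quasiconvexity follows immediately.

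For undistortedness, my first step is to check that any $N$--join-busting subgroup is purely loxodromic, hence in particular star-free. If some nontrivial $h \in H$ were elliptic, then $h$ would admit a cyclic reduction $\tilde h$ supported in a join of $\Gamma$ by Theorem \ref{t:gex}(6), and the powers $\tilde h^n$ are reduced join words of length $n\cdot \ell_{\aga}(\tilde h)$. Conjugating back to $h^n \in H$ and reducing, for $n$ large one obtains a reduced word for $h^n$ containing a join subword of length growing linearly in $n$, contradicting the $N$--join-busting property. Hence $H$ is purely loxodromic, so in particular star-free, and Proposition \ref{p:undistorted} yields that $H$ is quasi-isometrically embedded in $A(\Gamma)$.

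For the Hausdorff condition, I fix $K \ge 1$ and consider two $K$--quasi-geodesics $\gamma_1,\gamma_2$ in $A(\Gamma)$ sharing endpoints $h_1,h_2 \in H$. Up to replacing $K$ by a constant $K' = K'(K)$, I may assume the $\gamma_i$ are $K'$--quasi-geodesic edge paths in $\Salv^{(1)}$, since any quasi-geodesic in the Cayley graph can be interpolated by an edge path with only a bounded loss of constants. Left-translating by $h_1^{-1}$ (which is an isometry of $\Salv$), I reduce to the case $h_1 = 1$ and $h_2 = h \in H$. Since $H$ is $N$--join-busting, the element $h$ is itself $N$--join-busting, so Theorem \ref{t:stability} supplies a constant $M = M(K',N)$ such that both $\gamma_1$ and $\gamma_2$ lie within Hausdorff distance $M$ of any fixed combinatorial geodesic $w$ from $1$ to $h\cdot 1$. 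The triangle inequality then gives $d_{\mathrm{Haus}}(\gamma_1,\gamma_2) \le 2M$, with $M$ depending only on $K$ and the join-busting constant of $H$, verifying stability.

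Finally, combinatorial quasiconvexity drops out: for any $h_1,h_2 \in H$ the image in $\Salv^{(1)}$ of an $H$--geodesic joining them is a quasi-geodesic whose constants depend only on the distortion of $H$, while a combinatorial geodesic between the same endpoints is a $1$--quasi-geodesic, so the Hausdorff bound from the preceding paragraph forces the combinatorial geodesic into a uniform neighborhood of $H$. There is no serious obstacle in this corollary; the deep content is already packaged in Theorem \ref{t:stability} and Proposition \ref{p:undistorted}. The only point demanding a little care is the implication that $N$--join-busting forces purely loxodromic, which hinges on controlling the cancellation between the conjugator $c$ and the cyclically reduced core $\tilde h$ when passing from $h^n = c\tilde h^n c^{-1}$ to a reduced spelling of $h^n$.
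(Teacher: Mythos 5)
Your proof is correct and follows essentially the same route as the paper: undistortedness via Proposition \ref{p:undistorted} and the Morse-type Hausdorff bound via Theorem \ref{t:stability} applied to $g = h_1^{-1}h_2$ after left-translating. One thing you add that the paper elides is the explicit verification that $N$--join-busting forces $H$ to be star-free (indeed purely loxodromic), which is what legitimizes the invocation of Proposition \ref{p:undistorted}; the paper applies that proposition without comment, and your write-up is the more careful one on this point. The cancellation worry you flag at the end about passing from $h^n = c\tilde{h}^n c^{-1}$ to a reduced spelling resolves cleanly by standard RAAG facts: one may choose the decomposition $h = c\tilde{h}c^{-1}$ so that $\tilde{h}$ is cyclically reduced and the concatenation $c\tilde{h}c^{-1}$ is itself reduced, and then $c\tilde{h}^n c^{-1}$ is automatically a reduced spelling of $h^n$, so the join subword $\tilde{h}^n$ survives intact with length $n\cdot\ell_{\aga}(\tilde{h})$. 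With that observation in hand there is no gap, and the remainder of your argument matches the paper's.
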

\begin{proof}
Suppose that $H$ is $N$--join-busting. By Proposition \ref{p:undistorted}, $H$ is undistorted in $A(\Gamma)$. Hence, to prove stability of $H$ in $A(\Gamma)$ it suffices to show that for any elements $h_1,h_2 \in H$ there is a geodesic $\gamma$ in $A(\Gamma)$ joining $h_1$ to $h_2$ such that if $\alpha$ is a $K$--quasi-geodesic in $A(\Gamma)$ also joining $h_1$ to $h_2$, then the Hausdorff distance between $\alpha$ and $\gamma$ is bounded by $M$, depending only on $K$ and $N$.

To see this, note that $g =h_1^{-1}h_2 \in H$ is $N$--join-busting by definition. By Theorem \ref{t:stability}, if we denote by $w$ a geodesic in $A(\Gamma)$ from $1$ to $g$, then the Hausdorff distance between $w$ and $h_1^{-1} \cdot \alpha$ is bounded by $M$, where $M$ depends only on $N$ and $K$. Here, we are using the identification between the Cayley graph of $A(\Gamma)$ with its vertex generators and $\Salv^{(1)}$. Setting $\gamma = h_1 \cdot w$ completes the proof of the corollary.
\end{proof}

\section{Proof of Theorem \ref{t:main}}\label{s:cnvxccmpct}

In this section we complete the proof of Theorem \ref{t:main}.

\begin{proof}
Suppose $H < A(\Gamma)$ is finitely generated.  First let us prove that (3) implies (1), that is, if $H$ is purely loxodromic then the orbit map of $H$ into $\Gamma^e$ is a quasi-isometric embedding.  By Theorem \ref{t:jbusting}, we know that, for any element of $H$ written as a geodesic word in $A(\Gamma)$, the length of any join subword is bounded.  As star subwords are join subwords, this implies that star-length is bi-Lipschitz to $A(\Gamma)$--length for elements in $H$.  On the other hand, orbit distance in $\Gamma^e$ is quasi-isometric to star-length \cite{KimKoberda2014}.   Moreover, Proposition \ref{p:undistorted} says that $|\cdot|_H$ is quasi-isometric to $|\cdot|_{A(\Gamma)}$ for elements in $H$.  Therefore we may conclude that, for $H$ a purely loxodromic, finitely generated subgroup of $A(\Gamma)$, the orbit map of $(H, |\cdot|_H)$ into $\Gamma^e$ is a quasi-isometric embedding.  The converse is immediate: if $H \to \Gamma^e$ is a quasi-isometric embedding, no element of $H$ can be elliptic, because these have bounded orbit, and right-angled Artin groups are torsion-free.

We have seen in Corollary \ref{cor:joinbusting_implies_stable} that join-busting implies stability; this and Theorem \ref{t:jbusting} proves that (3) implies (2).  The converse follows from the observation that, if a subgroup is stable (and thus hyperbolic), then all its $\Z$--subgroups are Morse and therefore stable (compare with Chapter III.$\Gamma$, Proposition 3.9 and Corollary 3.10 of~\cite{bridhaef}), and is therefore generated by loxodromic elements.  To see the lattermost claim, consider the cyclic subgroup generated by an elliptic element $w$, whose centralizer contains some nontrivial $c$ with $\langle c, w \rangle \cong \Z \oplus \Z$.  If we represent $w$ and $c$ by reduced words, then the edge paths corresponding to $c^nw^nc^{-n}$ and $w^n$ are pairs of $K$--quasi-geodesics whose Hausdorff distance grows with $n$, with $K$ independent of $n$. To see this last claim, we have that $\langle c,w\rangle$ quasi-isometrically embeds in $A(\Gamma)$, as follows from the Flat Torus Theorem (see II.7 of \cite{bridhaef}). It therefore suffices to see that the edge paths corresponding to $c^nw^nc^{-n}$ and $w^n$ are pairs of uniform quasi-geodesics in $\Z^2$. This is a straightforward computation. Therefore $\grp{w}$ is not stable.
\end{proof}

\section{The Howson property for star-free subgroups}\label{s:neumann}

In this section we complete the proof of Theorem \ref{t:starfree}.  Let $v\in V(\gam)$, and let \[G_v=\langle \{w\mid w\in V(\st(v))\}\rangle,\] i.e. the subgroup of $\aga$ generated by the vertices in the star of $v$.  We say that such a group $G_v$ is a \emph{star subgroup} of $\aga$.  Let $H<\aga$ be a subgroup.  We say that $H$ is \emph{star-free} if for every $1\neq g\in H$, we have that $g$ is not conjugate into a star subgroup of $\aga$.  In other words, there is no choice $h\in\aga$ and $v\in V(\gam)$ such that $g^h\in G_v$.  In the language of \cite{KimKoberda2014}, $H$ is star-free if for every $1\neq g\in H$, a cyclically reduced conjugate of $g$ always has star length at least two. Here, the \emph{star length} of an element $g\in \aga$ is the smallest $N$ for which there are elements $\{h_1,\ldots,h_N\}$ such that each $h_i$ is contained in a star subgroup of $\aga$, and such that $g=h_1\cdots h_N$.

Note that a purely loxodromic subgroup of $\aga$ is star-free, but the converse is false.  For example, one can take \[F_2\times F_2\cong \langle a,c\rangle\times\langle b,d\rangle.\]  If $\gam$ is a square labeled cyclically by the vertices $\{a,b,c,d\}$, we have that \[\aga\cong  \langle a,c\rangle\times\langle b,d\rangle.\]  Note that $\aga$ has no loxodromic elements in this case, but any cyclically reduced word with full support is star-free.  The subgroup $\langle abcd\rangle$ is cyclic, and every element is star-free. The following proposition gives the exact relationship between star-free and purely loxodromic subgroups of $A(\Gamma)$:

\begin{prop}\label{p:starfree_vs_plox}
Let $H\le \aga$ be a finitely generated subgroup. Then $H$ is purely loxodromic if and only if $H$ is star-free and combinatorially quasiconvex.
\end{prop}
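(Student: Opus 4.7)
The plan is to prove the two directions separately: the forward implication is a short assembly of earlier results in the paper, while the backward implication requires a direct geometric argument exploiting combinatorial quasiconvexity.

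For the forward direction, suppose $H$ is purely loxodromic. Since every star subgroup is a join subgroup, no element of $H$ can be conjugate into a star subgroup, and so $H$ is star-free. Combinatorial quasiconvexity follows by chaining Theorem \ref{t:jbusting} (purely loxodromic implies join-busting) with Corollary \ref{cor:joinbusting_implies_stable} (join-busting implies stable and, in particular, combinatorially quasiconvex in $A(\Gamma)$).

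For the backward direction, assume $H$ is star-free and combinatorially quasiconvex with constant $K$, and suppose toward a contradiction that $H$ contains a nontrivial elliptic element $g$. The strategy is to manufacture a nontrivial element of $H$ that actually lies inside a star subgroup of $A(\Gamma)$, directly contradicting star-freeness. After replacing $H$ with a suitable $A(\Gamma)$-conjugate (which preserves both hypotheses) and $g$ with its image, one may assume $g$ is cyclically reduced and $g \in A(J)$ for some subjoin $J = J_1 * J_2$ of $\Gamma$. Using the product decomposition $A(J) = A(J_1) \times A(J_2)$, write $g = g_1 g_2$ with $g_i \in A(J_i)$. If one factor is trivial, then $g$ itself lies in $A(J_i) \subseteq A(\st(v))$ for any vertex $v$ of the opposite factor, immediately violating star-freeness.

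In the remaining case both $g_i$ are nontrivial. Cyclic reduction of $g$ in $A(\Gamma)$ forces each $g_i$ to be cyclically reduced in $A(J_i)$ (any cyclic conjugation inside $A(J_1)$ can be pulled through $g_2$ because the factors commute), so $g^n = g_1^n g_2^n$ is a geodesic word in $A(\Gamma)$ of length $n(|g_1| + |g_2|)$, and the vertex $g_1^n$ lies on a geodesic between the $H$-points $1$ and $g^n$. Combinatorial quasiconvexity supplies $h_n \in H$ with $u_n := g_1^{-n} h_n$ satisfying $|u_n|_{A(\Gamma)} \le K$. Since only finitely many elements of $A(\Gamma)$ have length at most $K$, pigeonhole yields an infinite subsequence along which $u_n$ equals a fixed element $u$. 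For any two indices $m > n$ in this subsequence, the product $h_m h_n^{-1} = g_1^{m-n}$ is a nontrivial element of $H$ lying in $A(J_1)$. Since every $v \in V(J_2)$ satisfies $V(J_1) \subseteq \lk(v)$, we conclude $g_1^{m-n} \in A(\st(v))$, a star subgroup of $A(\Gamma)$, producing the required contradiction. The one delicate point to verify is the cyclic-reduction observation ensuring $g_1^n g_2^n$ is genuinely a geodesic word; the rest is a direct consequence of combinatorial quasiconvexity combined with pigeonhole.
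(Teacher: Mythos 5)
Your proof is correct, and the backward direction takes a genuinely different route from the paper's. The paper passes to the intersection subgroup $H_J = H \cap A(J)$, invokes the (nontrivial) fact that an intersection of two combinatorially quasiconvex subgroups of $A(\Gamma)$ is combinatorially quasiconvex in the smaller one (via Bridson--Haefliger III.$\Gamma$.4.13), and then applies quasiconvexity-plus-pigeonhole to a sequence of elements of $H_J$ with unbounded lengths, splitting them along the factor decomposition $A(J) = A(X) \times A(Y)$. Your argument instead isolates a single elliptic $g \in H$, cyclically reduces it inside $A(J_1 * J_2)$ so that $g^n = g_1^n g_2^n$ is a geodesic word, and applies quasiconvexity of $H$ in $A(\Gamma)$ directly along the midpoints $g_1^n$ of these geodesics; pigeonhole on the bounded-length correction terms $u_n = g_1^{-n} h_n$ then produces a power of $g_1$ in $H$, contradicting star-freeness. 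Your route is more self-contained: it entirely avoids the intersection-of-quasiconvex-subgroups input, and the key geometric observation (cyclic reducedness of $g$ forces cyclic reducedness of each factor $g_i$, because a shortening conjugator in $A(J_1)$ commutes past $g_2$) is elementary. The one thing you assert without justification, as does the paper before citing Haglund (Corollary 2.29 of the Haglund reference), is that combinatorial quasiconvexity of $H$ is preserved under conjugation by an arbitrary element of $A(\Gamma)$; this is not automatic, since quasiconvexity with respect to a fixed word metric is in general not a conjugation invariant. You should note that this holds for cocompact special groups (or invoke the fact that cosets of a combinatorially quasiconvex subgroup are combinatorially quasiconvex), since your replacement of $H$ by $H^{hc}$ rests on it.
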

\begin{proof}
If $H$ is purely loxodromic, then it is star-free by definition and quasiconvex (with respect to any generating set) by Theorem \ref{t:main}.

Now suppose that $H$ is star-free and combinatorially quasiconvex but not purely loxodromic. Clearly, being star-free is invariant under conjugation. Combinatorial quasiconvexity is also invariant under conjugation for right-angled Artin groups, though this is less obvious. The reader may consult Corollary 2.29 of \cite{haglund2008}, where it is shown that if $H$ is combinatorially quasiconvex, then so is the coset $H \cdot g \subset A(\Gamma)$. This in turn implies that $g^{-1}Hg$ is a combinatorially quasiconvex subgroup of $A(\Gamma)$.

We may therefore suppose that there is a join $J \subset \Gamma$ such that \[H_J =H \cap A(J) \neq \grp{1}.\] Since both $H$ and $A(J)$ are combinatorially quasiconvex in $A(\Gamma)$, $H_J$ is combinatorially quasiconvex in $A(J)$. This last claim follows from the proof of Proposition III.$\Gamma$.4.13 in \cite{bridhaef}, where the reader may note that the semihyperbolicity assumption is never used. Since $J$ is a join, $J = X *Y$ where $X$ and $Y$ are nonempty subgraphs of $\Gamma$ each of which is contained in some star of $\Gamma$. This implies that $H_J \le A(J) = A(X) \times A(Y)$ and 
\[H_J \cap A(X) = H_J \cap A(Y)=\grp{1},\]
because $H$ is star-free.

Let $K\ge 0$ be the quasiconvexity constant for $H_J \le A(J) = A(X) \times A(Y)$. If $h_i \in H_J$ are distinct elements whose word lengths go to infinity, then $h_i=x_iy_i$ and the lengths of either $x_i \in A(X)$ or $y_i \in A(Y)$ also go to infinity. Suppose $|x_i|_{A(J)} \to \infty$. Then since the concatenation $[1,x_i] \cup [x_i,x_iy_i]$ is a geodesic in $A(J) = A(X) \times A(Y)$ and $H_J$ is $K$-quasiconvex, there must be a sequence $p_i \in A(X)$ and $q_i \in A(Y)$ with $p_iq_i \in H_J$, $p_iq_i$ is within $K$ of the point $x_i$, and $|q_i|_{A(Y)} \le K$. As there are only finitely many elements of $A(Y)$ of length no greater than $K$, there are $i\neq j$ with $q_i=q_j$. Then 
\[1\neq p_iq_i(p_jq_j)^{-1} = p_ip_j^{-1} \in H_J \cap A(X),\]
a contradiction. 
\end{proof}

Star-free subgroups are always free, even without the assumption of finite generation:

\begin{prop}\label{p:free}
Let $H<\aga$ be a star-free subgroup.  Then $H$ is free.
\end{prop}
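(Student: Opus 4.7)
The plan is to induct on $|V(\Gamma)|$. For the base case, if $|V(\Gamma)| \le 1$ then $A(\Gamma)$ is either trivial or infinite cyclic with $A(\mathrm{st}(v)) = A(\Gamma)$, so star-freeness forces $H = \{1\}$, which is free.

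For the inductive step, pick a vertex $v \in V(\Gamma)$. If $v$ is a cone vertex (i.e. $\mathrm{st}(v) = V(\Gamma)$), then $A(\mathrm{st}(v)) = A(\Gamma)$ and again $H = \{1\}$. Otherwise, the defining presentation of $A(\Gamma)$ gives an amalgamated free product splitting
\[
A(\Gamma) \;=\; A(\Gamma \setminus \{v\}) \;*_{A(\mathrm{lk}(v))}\; A(\mathrm{st}(v)),
\]
since the only relations involving $v$ say that $v$ commutes with each element of $\mathrm{lk}(v)$. Consider the associated Bass--Serre tree $T$, on which $A(\Gamma)$ acts without inversions. Every edge stabilizer is conjugate to $A(\mathrm{lk}(v)) \le A(\mathrm{st}(v))$, and every vertex stabilizer is conjugate to either $A(\Gamma \setminus \{v\})$ or $A(\mathrm{st}(v))$.

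Restrict this action to $H$. Since $H$ is star-free, $H \cap g A(\mathrm{st}(v)) g^{-1} = \{1\}$ for every $g \in A(\Gamma)$, which immediately kills every edge stabilizer of $H$ as well as every vertex stabilizer of ``star type.'' The remaining vertex stabilizers have the form
\[
K_g \;:=\; H \cap g A(\Gamma \setminus \{v\}) g^{-1}.
\]
The crux is to check that $g^{-1} K_g g$ is itself a star-free subgroup of $A(\Gamma \setminus \{v\})$, and then invoke the inductive hypothesis. For any $u \in V(\Gamma \setminus \{v\})$ we have $\mathrm{st}_{\Gamma \setminus \{v\}}(u) \subseteq \mathrm{st}_\Gamma(u)$, so $A(\mathrm{st}_{\Gamma \setminus \{v\}}(u)) \subseteq A(\mathrm{st}_\Gamma(u))$; an element of $g^{-1} K_g g$ conjugate into the former would, after further conjugation by $g$, give a nontrivial element of $H$ conjugate into a star subgroup of $A(\Gamma)$, contradicting star-freeness. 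Hence each $K_g$ is free by induction.

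Finally, $H$ acts without inversions on the tree $T$ with trivial edge stabilizers and free vertex stabilizers, so by the standard Bass--Serre structure theorem $H$ decomposes as a free product of conjugates of its vertex stabilizers together with a free group, i.e. a free product of free groups, and is therefore free. The only subtle point in the argument is verifying that star-freeness descends through the conjugated vertex groups $gA(\Gamma \setminus \{v\})g^{-1}$, which is handled by the containment of stars noted above together with the conjugation-invariance of the star-free condition.
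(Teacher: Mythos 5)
Your proof is correct and follows essentially the same route as the paper: split $A(\Gamma)$ as an amalgamated free product over $A(\lk(v))$, observe that star-freeness kills all edge stabilizers (and star-type vertex stabilizers) of the $H$-action on the Bass--Serre tree, and induct on $|V(\Gamma)|$. You supply details the paper leaves implicit---the base case, the cone-vertex degeneracy, and the verification that star-freeness passes to the conjugated vertex stabilizers via $\st_{\Gamma\setminus\{v\}}(u)\subseteq\st_\Gamma(u)$---but the underlying argument is the same.
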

In particular, we obtain part (1) of Theorem \ref{t:starfree}.  The proof of this proposition is identical to the proof of Theorem 53 in \cite{KimKoberda2014} [cf. Theorem \ref{t:gex}, item (7)].  We recall a proof for the convenience of the reader.
\begin{proof}[Proof of Proposition \ref{p:free}]
Suppose $H \neq 1$. Choose a vertex $v\in V(\gam)$, and let $\gam_v$ be the subgraph of $\gam$ spanned by $V(\gam)\setminus \{v\}$.  Then we have \[\aga\cong \langle \st(v)\rangle*_{\langle\lk(v)\rangle}A(\gam_v),\] so we get an action of $\aga$ on the tree associated to this splitting.  Since $H$ is star-free, the action of $H$ has trivial edge stabilizer, so that $H$ splits as a free product of subgroups of $A(\gam_v)$ with possibly an additional free factor.  The proposition follows by induction on $|V(\gam)|$.
\end{proof}

Henceforth in this section, we will fix a finite free generating set $S$ for $H$ as a subgroup of $\aga$. We also fix reduced word representatives for each element of $S$.

Note that Proposition \ref{p:undistorted} gives part (2) of Theorem \ref{t:starfree}.  So, we are left with establishing part (3).  In the remainder of this section, we will prove:

\begin{thm}\label{t:hannaneumann}
Let $\gam$ be a finite simplicial graph, $\Lambda\subset\gam$ an induced subgraph, and let $H<\aga$ be a finitely generated star-free subgroup.  Then the group $H_{\Lambda}=H\cap A(\Lambda)$ is also finitely generated.
\end{thm}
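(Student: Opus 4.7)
Since $H$ is free by Proposition~\ref{p:free}, so is $H_\Lambda$; it therefore suffices to bound the rank of $H_\Lambda$. Because $A(\Lambda) \hookrightarrow A(\Gamma)$ is an isometric embedding, an element $h \in H$ lies in $H_\Lambda$ precisely when the $A(\Gamma)$-reduced form of $h$ is supported on $V(\Lambda)$. My plan is to encode this condition as a finite directed $S^{\pm 1}$-labeled graph $\mathcal{G}$ with basepoint $*$, so that the set of labels of closed loops at $*$ reading $H$-reduced words is exactly $H_\Lambda$. Finite generation will then follow because $H_\Lambda$ is a homomorphic image of $\pi_1(\mathcal{G}, *)$, which is free of finite rank as the fundamental group of a finite graph.

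The key locality fact is Lemma~\ref{l:cancel diam}: in a reducing disk diagram for an $H$-geodesic expression $h_1\cdots h_n$, any non-contributing arc connects letters from $h_i$ and $h_j$ with $|i-j| < D = D(H)$. Hence as one processes $h_1, h_2, \ldots$ in order, any letter contributed by $h_j$ that has failed to find a cancellation partner by the time $h_{j+D-1}$ has been consumed cannot cancel with anything later, so it will appear in the final reduced form of $h$ in $A(\Gamma)$. I will therefore define the state after processing $h_1\cdots h_k$ to record which letters of $h_{k-D+2}, \ldots, h_k$ remain alive (uncancelled) in the partial reduction, together with their relative ordering. Because each $h_i$ is drawn from the fixed finite set $S^{\pm 1}$ and each generator has a fixed bounded-length representative in $A(\Gamma)$, there are only finitely many such states.

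The vertices of $\mathcal{G}$ are these states, with $*$ the empty state; for each $s \in S^{\pm 1}$ and state $\sigma$, add an $s$-labeled edge from $\sigma$ to the state obtained by appending the fixed representative of $s$ to $\sigma$, performing all available cancellations in $A(\Gamma)$, and then evicting from the window any letter coming from $h_{k-D+2}$, which has now reached the end of its window of possible cancellations. Include the edge only if every letter so evicted lies in $V(\Lambda)$; otherwise omit it. By construction a loop at $*$ labeled by an $H$-reduced word $h_1\cdots h_n$ exists if and only if $h_1\cdots h_n$ reduces in $A(\Gamma)$ to a word supported on $V(\Lambda)$, i.e.\ represents an element of $H_\Lambda$, and conversely every element of $H_\Lambda$ traces out such a loop.

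The principal technical obstacle is the bookkeeping in defining states and transitions: one must verify, using Lemma~\ref{l:cancel diam}, that once a letter leaves the tracked window its cancellation fate is sealed, that the transition associated to each $s$ is genuinely a function of $(\sigma, s)$ alone, and that the correspondence between loops in $\mathcal{G}$ and elements of $H_\Lambda$ is both well-defined and surjective. In particular, some care is needed when appending $s$ causes a chain reaction of cancellations inside the window, exposing previously buried letters to future cancellation; this must be handled consistently so that the state machine truly captures which letters are permanent and whether they belong to $V(\Lambda)$.
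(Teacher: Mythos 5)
Your proposal is, at its core, the same pigeonhole-on-cancellation-patterns argument the paper runs, repackaged in automaton language: Lemma~\ref{l:cancel diam} makes cancellation local, so the ``state'' after processing $h_1\cdots h_k$ can be captured by the cancellation data of a window of $D$ consecutive $H$-generators, of which there are only finitely many possibilities. The paper exploits exactly this by finding two bounded-apart indices $i<j$ with $b_i=b_j$ and matching cancellation patterns $\mathcal{P}(b_i)=\mathcal{P}(b_j)$, then splicing diagrams to write $h$ as a product $(\alpha_j\alpha_i^{-1})(\alpha_i\omega_j)$ of two shorter elements of $H_\Lambda$; this is the pumping lemma for your automaton, carried out by hand. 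Your formulation has the minor advantage of not needing the paper's induction down to $\Lambda=\Gamma\setminus\{v\}$. But as written it has two gaps.

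First, the claim that $H_\Lambda$ is a homomorphic image of $\pi_1(\mathcal{G},*)$ does not hold: after reading the final generator $h_n$ of an $H$-reduced word for $h\in H_\Lambda$, the machine sits in a state still recording the alive letters contributed by $h_{n-D+2},\dots,h_n$, which are generically nonempty (they are merely supported in $\Lambda$), so the path does not close up at $*$. What you actually produce is a finite automaton with a \emph{set} of accepting states (those whose alive letters all lie in $V(\Lambda)$), and you then need the Anisimov--Seifert theorem---a subgroup of a free group is finitely generated iff its set of reduced-word representatives is a regular language---or a rerun of its proof (pick a spanning tree, read off bounded generators), to conclude. Second, your state, ``which letters of the window remain alive, together with their relative ordering,'' is likely too coarse for the transition to be well-defined: to decide whether an alive $x$ in the window may cancel with some $y$ in $h_{k+1}$, one must know how $x$ sits relative to \emph{all} letters of the reduced prefix between it and $y$, including contributing letters that were already evicted; the arc from $x$ to $y$ in a combed diagram may cross contributing arcs, and whether it may do so is commutation data your state does not store. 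The paper's cancellation pattern $\mathcal{P}(b_i)$ records the entire combed subdiagram over the window---all arcs, contributing and not, with labels and co-orientations---and this is essentially the minimal amount of structure needed. You acknowledge the bookkeeping is delicate, but the state as described should be enriched to that level before the transition function is genuinely a function of $(\sigma,s)$.
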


Note that Theorem \ref{t:hannaneumann} does not exclude the possibility that $H_{\Lambda}$ is the identity, and this will indeed be the case if $\Lambda$ is contained in the star of a vertex of $\Gamma$.  Theorem \ref{t:hannaneumann} is false without the assumption that $H$ is star-free, as the following example which the authors learned from M. Sapir will demonstrate.  Consider the group $F_2\times\bZ$, where the factors are generated by $\{a,b\}$ and $z$ respectively.  This group is $A(P_3)$, where $P_3$ is the path on three vertices, with middle vertex $z$ and end vertices $a$ and $b$.  Consider the subgroup $H=\langle az,b\rangle$, which is a free subgroup of $A(P_3)$, and let us look at its intersection with $\langle a,b\rangle$.  We have that a word $w$ in $az$ and $b$ lies in $\langle a,b\rangle$ if and only if the $z$ exponent of $w$ is zero, i.e. $w$ is in the kernel of the homomorphism $H\to \bZ$ defined by $az\to 1$ and $b\to 0$.  It follows that $H\cap\langle a,b\rangle$ is infinitely generated.

In the proof of Theorem \ref{t:hannaneumann}, we use the fact that we can control cancellation distance within star-free subgroups in order to produce an explicit finite generating set for $H_{\Lambda}$.

\subsection{Cancellation patterns}

Before proceeding, we must define cancellation patterns of vertices within words in the star-free subgroup $H$.  We will use the disk diagram terminology outlined in Section \ref{s:dds}.  Let $h\in H$ be an element viewed as a word in the free generating set $S$ for $H$, and fix a reducing diagram $\Delta$ for $h$ which realizes $h$ as the reduced word $w$ in $\aga$.  Using Lemma \ref{l:comb}, we will assume the the disk diagram $\Delta$ and the reduced word $w$ have been chosen so that $w$ is combed in $\Delta$.
Thus, each letter of $h$ has a co-oriented arc in $\Delta$ attached to it, which either has both endpoints in the subarc of $\partial\Delta$ labelled by $h$ (in which case the two vertices cancel), or which has one endpoint in the subarc of $\partial\Delta$ labelled by $w$ (in which case the vertex does not cancel and in which case we call the arc \emph{contributing}).  Arcs in the reducing diagram which are not contributing will be called \emph{non-contributing} arcs.  Every arc in $\Delta$ is labelled by a vertex of $\gam$ or its inverse, depending on the co-orientation of the arc.

\begin{figure}[H]
\begin{center}
\includegraphics[height=1.50in]{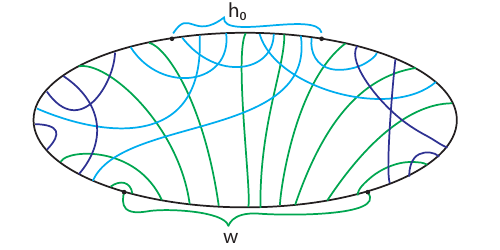}
\caption{Reducing diagram indicating subword $h_0$.  Light blue and dark blue arcs have both ends in $h$.}
\label{subword}
\end{center}
\end{figure}

Let $h_0$ be an \emph{$H$--subword} of $h$, which is to say that if \[h=s_1\cdots s_n,\] where each $s_i\in S$, we have that \[h_0=s_i\cdots s_j\] for some $1\leq i\leq j\leq n$.  We will often use the notation $h_0\leq h$ for a subword of $h$.

We now fix the $H$--subword $h_0\leq h$ and a reducing diagram $\Delta$ for $h$.  Suppose $v\in V(\gam)$.  A \emph{$v$--cancellation partition for $h_0$} is a partition of the occurrences of $v$ and $v^{-1}$ in $h_0$, viewed as an unreduced word in the vertex generators of $\aga$, into eight sets \[\{L_v^{\pm},R_v^{\pm},I_v^{\pm},N_v^{\pm}\},\] where a set with a positive exponent only contains copies of the element $v$ and a set with a negative exponent only contains copies of the element $v^{-1}$.  Intuitively, $h_0\leq h\in H$ is a subword in which occurrences of $v$ cancel in the reducing diagram $\Delta$.  The sets $L_v^+$ and $R_v^+$ correspond to the copies of $v$ which cancel to the left of $h_0$ and to the right if $h_0$, while the sets $L_v^{-}$ and $R_v^{-}$ play the same respective roles for copies of $v^{-1}$.  The sets $I_v^{\pm}$ correspond to copies of $v$ and $v^{-1}$ which cancel internally within $h_0$, and $N_v^{\pm}$ corresponds to copies of $v$ and $v^{-1}$ which do not cancel at all when $h$ is reduced within $\aga$.  We will say that two words have the \emph{same cancellation partition} if the corresponding sets \[\{L_v^{\pm},R_v^{\pm},I_v^{\pm},N_v^{\pm}\}\] have the same cardinality.  If the word $h_0$ is not clear from context, we will write \[\{L_v^{\pm}(h_0),R_v^{\pm}(h_0),I_v^{\pm}(h_0),N_v^{\pm}(h_0)\}\] to eliminate any ambiguities.

In a reducing diagram $\Delta$ for $h$, the copies of $v$ and $v^{-1}$ in $L_v^+$ and $L_v^-$ have non-contributing arcs emanating from them that move off to the left of $h_0$.  Similarly, the copies of $v$ and $v^{-1}$ in $R_v^+$ and $R_v^-$ have arcs emanating from them that move off to the right of $h_0$.  The copies of $v$ and $v^{-1}$ in $I_v^+\cup I_v^-$ are connected by arcs with both endpoints in $h_0$, and the copies of $v$ and $v^{-1}$ in $N_v^+\cup N_v^-$ have vertical arcs connecting them to $w$.

Observe that group inversion relates a cancellation partition for a subword $h_0\leq h$ with a cancellation partition for the subword $h_0^{-1}\leq h^{-1}$, with respect to the reducing diagram for $h^{-1}$ induced by the inversely labeled mirror image of the reducing diagram for $h$.  Specifically, we have \[I_v^{\pm}(h_0)=I_v^{\mp}(h_0^{-1}), N_v^{\pm}(h_0)=N_v^{\mp}(h_0^{-1}),\] and similarly \[R_v^{\pm}(h_0)=L_v^{\mp}(h_0^{-1}),L_v^{\pm}(h_0)=R_v^{\mp}(h_0^{-1}).\]

We are now ready to define the cancellation pattern for a subword $h_0\leq h$.  We delete all the arcs in the diagram $\Delta$ which do not have an endpoint in $h_0$, and for the remaining arcs, we retain the original co-orientation.  Furthermore, we label each arc by the vertex of its terminal endpoint in $h_0$.  We then draw two unoriented, simple arcs $\iota$ and $\tau$ (for ``initial'' and ``terminal'') in  $\Delta$ which satisfy the following conditions:

\begin{enumerate}
\item The arcs $\iota$ and $\tau$ connect the subarc of $\partial\Delta$ labelled by $h$ to the subarc of $\partial\Delta$ labelled by $w$.

 \item Neither $\iota$ nor $\tau$ intersect any arcs of $h$ that contribute to the reduced word $w$. (This is possible since the word $w$ is combed in $\Delta$.)
\item
If a non-contributing arc has exactly one endpoint in $h_0$, then the cancellation arc intersects exactly one of $\iota$ and $\tau$ exactly once.
\item
If $\al$ and $\al'$ are non--contributing arcs that both intersect either $\iota$ or $\tau$, then $\al$ and $\al'$ do not intersect in the region between $\iota$ and $\tau$.
\end{enumerate}

The authors are grateful to the referee for pointing out that with this definition, a cancellation pattern is a well--defined invariant of a subword $h_0$ of $h$.

\begin{figure}[H]
\begin{center}
\includegraphics[height=1.50in]{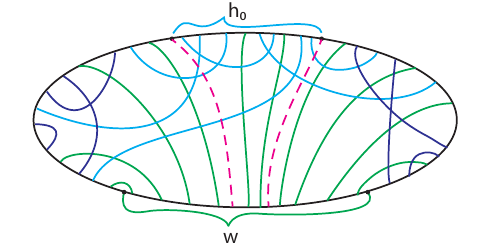}
\caption{Choice of $\iota$ and $\tau$.}
\label{interarcs}
\end{center}
\end{figure}

The \emph{cancellation pattern} $\mathcal{P}(h_0)$ of $h_0\leq h$ with respect to the chosen reducing diagram $\Delta$ is the closed region of the reducing diagram bounded by the boundary arcs, $\iota$, and $\tau$, together with the co-oriented, labelled non-contributing arcs. The boundary of $\mathcal{P}(h_0)$ consists of four arcs, namely $\iota$, $\tau$, and the \emph{top and bottom boundary arcs}. The top and bottom boundary arcs are labeled by the letters which occur in $h_0$ and $w$ respectively. Precisely, the intersection point of a contributing or non--contributing arc and a top or bottom boundary arc is labeled by the corresponding letter.

Two words $h_1$ and $h_2$ have the \emph{same cancellation pattern} if $\mathcal{P}(h_1)$ is homeomorphic to $\mathcal{P}(h_2)$ via a homeomorphism which is a label and co-orientation-preserving homeomorphism of the union of the non-contributing arcs, and which preserves the boundary arcs of the cancellation pattern.  Informally, two words have the same cancellation pattern if the cancellations of letters in the two words happen ``in the same way.''  It is clear from the pigeonhole principle that a fixed word $h_0\leq h$ in $H$ can have only finitely many different cancellation patterns as $h$ varies among all possible words in $H$ which contain $h_0$ as a subword, up to the equivalence relation of having the same cancellation pattern.

Fixing a reducing diagram $\Delta$ for $h$ and a subword $h_0\leq h$, a cancellation pattern of $h_0$ is related to a cancellation pattern for $h_0^{-1}\leq h^{-1}$ in a manner analogous to that of cancellation partitions.  Again, we can build a reducing diagram for $h^{-1}$ by taking the mirror image of the reducing diagram for $h$, inverting all the labels on the top and bottom boundary arcs, and switching all the co-orientations on the  non-contributing arcs.  Thus, the natural choice for the cancellation pattern $\mathcal{P}(h_0^{-1})$ in $h^{-1}$ is just the mirror image of $\mathcal{P}(h_0)$, with the labels $\iota$ and $\tau$ switched, the labels on boundary arcs inverted, and the co-orientations of the non-contributing arcs switched.

\begin{figure}[H]
\begin{center}
\includegraphics[height=1.50in]{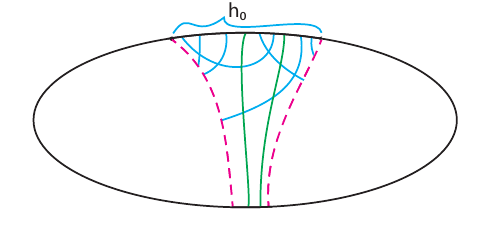}
\caption{Cancellation pattern $\mathcal{P}(h_0)$.}
\label{canpat}
\end{center}
\end{figure}

We remark briefly that when we define a cancellation pattern for a subword $h_0\leq h$, we keep track of all the non-contributing arcs, not just the ones corresponding to a particular vertex of $\gam$.  This point is essential in the proof of Theorem \ref{t:hannaneumann}.

\subsection{The Howson Property}
\begin{proof}[Proof of Theorem \ref{t:hannaneumann}]
By induction on $|V(\gam)|-|V(\Lambda)|$, it suffices to prove the result for $\Lambda$ equal to $\gam$ with one vertex, say $v$, deleted.  Given $h\in H_\Lambda = H\cap A(\Lambda)$, we will decompose $h$ into a product of two words $h=h_1\cdot h_2$, where $h_1,h_2\in H_\Lambda$, where $h_1$ has $H$--length bounded by $K=K(H)$, and where $h_2$ has $H$--length shorter than the maximum of $K$ and the $H$--length of $h$.  Here, the $H$--length of an element of $H$ is the word length with respect to the free generating set $S$.  This will prove the result, since it will show that $h$ is a product of elements of $H_\Lambda$ of bounded $H$--length.

Observe that the statement $h\in H_{\Lambda}$ is just the statement that in some (and hence in every) reducing diagram for $h$, every occurrence of $v$ and $v^{-1}$ cancels.  In other words, we have \[N_v^+(h_0)\cup N_v^-(h_0)=\emptyset\] in the cancellation partition for every subword $h_0$ of $h$.

Writing \[h=s_1\cdots s_n,\] Lemma \ref{l:cancel diam} says that if $v\in\supp s_i$ cancels with $v^{-1}\in\supp s_j$, then $|j-i|\leq D$, where $D$ is a constant which depends only on $H$.

We consider the set $\{\al_i\}$ of initial $H$--segments of $h$, which is to say that if \[h=s_1\cdots s_n,\] then \[\al_i=s_1\cdots s_i.\]  Given an initial segment $\al_i$, we call $\omega_i=\al_i^{-1}h$ the terminal $H$--segment of $h$.  For notational convenience, we will assume that $D\leq i\leq n-D$.  Since $D$ is fixed and depends only on $H$, this assumption is valid, since there are only finitely many elements of $H$ of $H$--length bounded by $D$.

For each $i$, we write $b_i$ and $e_i$ for the terminal $H$--segment of $\al_i$ of $H$--length $D$, and the initial $H$--segment of $\omega_i$ of $H$--length $D$, respectively.  That is to say, we have \[b_i=s_{i-D+1}\cdots s_i,\] and \[e_i=s_{i+1}\cdots s_{i+D}.\]  Observe that if $n\gg 0$ then there are two indices, say $i$ and $j$ with $i<j$, for which the segments $b_i$ and $b_j$ are equal as words in $H$, and for which the cancellation patterns are the same.

\begin{figure}[H]
\begin{center}
\includegraphics[height=2in]{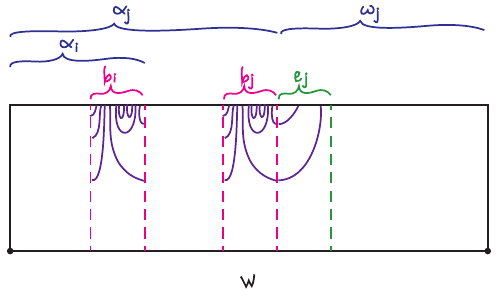}
\caption{Part of reducing diagram with $b_i = b_j$ and $e_j$ indicated; only arcs corresponding to $v$ are shown, although other arcs appear in their cancellation patterns.  However, by Lemma \ref{l:cancel diam}, no arcs traverse these three cancellation patterns, since they have $H$-length $D$.}
\label{HNarg}
\end{center}
\end{figure}

Since we may assume $j$ and $i$ to be bounded in a way which depends only on $H$ (since we produce $i$ and $j$ via a pigeonhole principle argument), we have that $\al_j\cdot \al_i^{-1}$ has universally bounded $H$--length, say $K=K(H)$.  Observe that both $\al_j\cdot \al_i^{-1}$ and $\al_i\cdot\omega_j$ have $H$--length shorter than the maximum of that of $h$ and $K$.  Moreover, we claim that $\al_j\cdot \al_i^{-1}$ and $\al_i\cdot\omega_j$ both lie in $H_{\Lambda}$.   This last claim will suffice to establish the result.

We consider $\al_j\cdot \al_i^{-1}$ first.  Observe that we may write $\al_j=\beta_j\cdot b_i$ and $\al_i=\beta_i\cdot b_i$ as reduced words in $H$, since $b_i=b_j$.  By Lemma \ref{l:cancel diam}, in any reducing diagram for $h$, occurrences of $v$ or $v^{-1}$ in $\al_i$ that cancel to the right of $\al_i$ must lie in $b_i$; likewise for $\al_j$ and $b_j$.  Furthermore, since $\al_i$ and $\al_j$ are initial segments of $h\in H_{\Lambda}$, no occurrence of $v$ or $v^{-1}$ cancels to the left.  Thus, the only $v$--cancellation arcs for $\al_i$ which do not have both endpoints in $\al_i$ are precisely the ones coming from the sets $R_v^{\pm}(b_i)$, and likewise for $\al_j$ and $R_v^{\pm}(b_j)$.

We may now construct a reducing diagram for \[\beta_jb_i\cdot b_i^{-1}\beta_i^{-1}=\al_j\cdot\al_i^{-1}\] by placing the cancellation patterns $\mathcal{P}(b_i)$ and $\mathcal{P}(b_i^{-1})$ next to each other, pairing the $\tau$--arc of $\mathcal{P}(b_i)$ with the $\iota$--arc of $\mathcal{P}(b_i^{-1})$.  We then splice the cancellation arcs together by matching arcs with compatible labels and co-orientations.  By the choice of $D$, no non-contributing arc with an endpoint in $\beta_j$ traverses $\mathcal{P}(b_i)$, and no non-contributing arc with an endpoint in $\beta_i^{-1}$ traverses $\mathcal{P}(b_i^{-1})$.  The result is that every occurrence of $v$ and $v^{-1}$ in $\al_j\cdot\al_i^{-1}$ cancels in $A(\Lambda)$, which is to say that $\al_j\cdot\al_i^{-1}\in H_{\Lambda}$.

\begin{figure}[H]
\begin{center}
\includegraphics[height=2in]{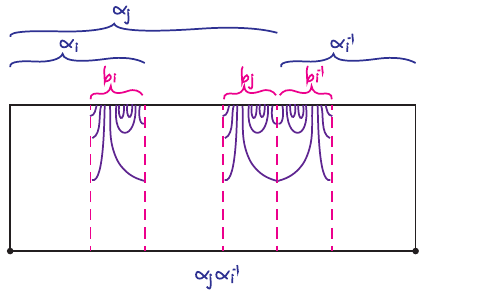}
\caption{Part of reducing diagram for $\al_j\cdot \al_i^{-1}$.}
\label{HNarg}
\end{center}
\end{figure}

The case of $\al_i\cdot\omega_j$ is similar but simpler.  Consider the subword $b_i\cdot e_j$ of $\al_i\cdot\omega_j$.  Since $b_i=b_j$ as words in $H$ and since the cancellation patterns of $b_i$ and $b_j$ are the same as subwords of $h$, we have that every $v\in R_v^+(b_i)$ can be cancelled with a $v^{-1}\in L_v^-(e_j)$, and similarly every $v^{-1}\in R_v^-(b_i)$ can be cancelled with a $v\in L_v^+(e_j)$.  Observe that every copy of $v$ and $v^{-1}$ which does not lie in $R_v^{\pm}(b_i)$ lies in $I_v^{\pm}(\al_i)$.  Similarly, every copy of $v$ and $v^{-1}$ which does not lie in $L_v^{\pm}(e_j)$ lies in $I_v^{\pm}(\omega_j)$.  Thus, every occurrence of $v$ and $v^{-1}$ cancels in $\al_i\cdot\omega_j$, so that this element also lies in $H_{\Lambda}$.
\end{proof}

\begin{figure}[H]
\begin{center}
\includegraphics[height=2in]{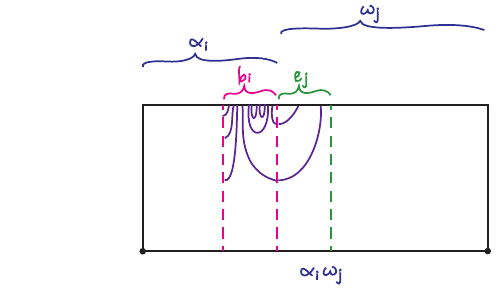}
\caption{Part of reducing diagram for $\al_i\cdot\omega_j$.}
\label{HNarg}
\end{center}
\end{figure}

\bibliography{biblio}
\bibliographystyle{amsalpha}

\end{document}